\documentclass[12pt]{amsart}

\usepackage{amsmath,  amssymb,  amsfonts, amsthm}
\numberwithin{equation}{section}
\usepackage{graphicx}
\usepackage{mathtools}
\usepackage{tikz}
\usetikzlibrary{intersections, calc, arrows.meta, positioning, cd, shapes.geometric}
\usepackage{hyperref}

\newtheorem{theorem}{Theorem}[section]
\newtheorem{proposition}[theorem]{Proposition}
\newtheorem{lemma}[theorem]{Lemma}
\newtheorem{corollary}[theorem]{Corollary}

\theoremstyle{definition}
\newtheorem{definition}[theorem]{Definition}
\newtheorem{example}[theorem]{Example}
\newtheorem{remark}[theorem]{Remark}

\newcommand{\Z}{\mathbb{Z}}

\newcommand{\tri}{\mathcal{T}}

\title[Conway Symbols of Frieze Patterns]{Conway Symbols of Frieze Patterns
       of Dynkin types}

\author[K. Miyamoto]{Kengo Miyamoto}
\address{Department of Computer and Information Sciences,
         Ibaraki University, Hitachi, Ibaraki 316-8511, Japan}
\email{kengo.miyamoto.uz63@vc.ibaraki.ac.jp}

\author[R. Suzuki]{Ryuhei Suzuki}
\address{Department of Computer and Information Sciences,
         Ibaraki University, Hitachi, Ibaraki 316-8511, Japan}
\email{26nm746n@vc.ibaraki.ac.jp}

\subjclass[2020]{05B45, 20H15, 51F15}

\keywords{frieze pattern, Conway notation}

\begin{document}

\begin{abstract}
In this paper, we classify the Conway symbols of the Euclidean symmetry groups of frieze patterns of Dynkin types $A_n$, $B_n$, $C_n$, $D_n$, $F_4$ and $G_2$.
First, we show that the Conway symbol of a Conway--Coxeter frieze is determined by the bilateral and the central symmetry of the underlying triangulation.
Secondly, we show that a frieze pattern of non-simply-laced type has the symbol ${*}\infty\infty$ or $\infty\infty$ according as its first interior row is palindromic or not, with the single exception of type $C_3$, which realises four symbols.
Thirdly, we show that the same dichotomy holds for type $D_n$ with $n \ge 5$, the symbol being read off the tagged triangulation of the once-punctured polygon attached to the frieze together with the value on its puncture edges, and that type $D_4$ realises five symbols.
\end{abstract}

\maketitle

\section{Introduction}
\label{sec:intro}

A \emph{frieze} is a periodic array of positive integers arranged in a horizontal band.
Friezes were introduced by Coxeter~\cite{Cox} and studied by Conway and Coxeter~\cite{CC1, CC2}, who showed that the friezes generated by the unimodular rule $ad - bc = 1$ are classified by triangulations of convex polygons.
Assem, Reutenauer and Smith~\cite{ARS} connected friezes with the cluster algebras of Fomin and Zelevinsky~\cite{FZ1, FZ2}, and the theory has since been extended in several directions, for example to friezes of types $B_n$, $C_n$ and $D_n$~\cite{BM, FoP, GES}, to $\mathrm{SL}_k$-friezes via Grassmannian cluster algebras~\cite{BFGST}, to unitary friezes and frieze vectors~\cite{GS}, and to $q$-deformations.
The $q$-deformed rational numbers of Morier-Genoud and Ovsienko~\cite{MO1} arose at the interface of continued fractions and cluster algebras, their numerators and denominators being $q$-analogues of the continuants and the $F$-polynomials of the cluster algebra of type $A$ attached to the triangulation that encodes the rational number, and they lead to the $q$-deformed Conway--Coxeter friezes of~\cite{MO2} and to the arithmetic of~\cite{KMRWY} with its applications to quivers of type $A$ and rational links.
We refer to~\cite{MG, Fa} for surveys.

To describe the symmetry of a planar pattern, Conway~\cite{C} introduced the \emph{orbifold notation}, which encodes a discrete group of Euclidean isometries by a symbol made of $*$, digits, $\times$ and $\infty$ \textup{(}see Subsection~\ref{ssec:Conway}\textup{)}.
A \emph{frieze group}, that is, a discrete group of isometries of the plane whose translation subgroup is infinite cyclic, has one of the seven Conway symbols
\[
  {*}22\infty,\quad
  2{*}\infty,\quad
  22\infty,\quad
  {*}\infty\infty,\quad
  \infty{*},\quad
  \infty{\times},\quad
  \infty\infty
\]
\textup{(}see~\cite{C, CBG}\textup{)}, whose meaning is recalled in Remark~\ref{rmk:7friezes}.
A frieze pattern of Dynkin type, embedded in the plane as in Definition~\ref{df:basic-domain}, has a frieze group of value-preserving Euclidean symmetries, and we call the Conway symbol of this group \emph{the Conway symbol} of the frieze pattern.
The aim of this paper is to determine the Conway symbols of the frieze patterns of types $A_n$, $B_n$, $C_n$, $D_n$, $F_4$ and $G_2$.

Our first aim is to classify the Conway symbols of Conway--Coxeter friezes, which we identify with frieze patterns of type $A_n$ \textup{(}Remark~\ref{rmk:equiv-FP-CC}\textup{)}.
Conway and Coxeter~\cite{CC1, CC2} observed that every Conway--Coxeter frieze admits a glide reflection and that further symmetries arise when the underlying triangulation is symmetric.
A triangulation of a convex polygon is called \emph{bilateral} or \emph{centrally symmetric} according as it is invariant under a reflection or under the half-turn about the centre of the polygon \textup{(}Subsection~\ref{ssec:main-An}\textup{)}.
The following is the first main theorem of this paper.

\begin{theorem}[Theorem~\ref{thm:main-An}]
\label{thm:intro-A}
Let $\tri$ be a triangulation of a convex $(n+3)$-gon, and let $F(q)$ be the Conway--Coxeter frieze associated with $\tri$.
Then the Conway symbol of $F(q)$ is
\[
  \begin{cases}
    2{*}\infty       & \text{if $\tri$ is bilateral and not centrally symmetric,}\\
    \infty{*}        & \text{if $\tri$ is centrally symmetric and not bilateral,}\\
    {*}22\infty      & \text{if $\tri$ is bilateral and centrally symmetric,}\\
    \infty{\times}   & \text{otherwise.}
  \end{cases}
\]
\end{theorem}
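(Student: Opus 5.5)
The plan is to identify the full group $G$ of value-preserving Euclidean symmetries of $F(q)$ with a group generated by the glide reflection of Conway--Coxeter together with whatever additional symmetries come from automorphisms of $\tri$. I will then read off the Conway symbol from which isometry types occur in $G$, using Remark~\ref{rmk:7friezes}.

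\textbf{Setup.} Place the frieze in the plane as in Definition~\ref{df:basic-domain}. Label the entries by diagonals $(i,j)$ of the $(n+3)$-gon, indices mod $n+3$, with the value equal to the matching number $m_{ij}$ counting triangles. The entry at $(i,j)$ is placed at horizontal coordinate $\tfrac{i+j}{2}$ and height $j-i$.

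The glide reflection $\gamma$ sends the entry $(i,j)$ to $(j,i+n+3)$. It always preserves values, since $m_{ij}=m_{ji}$, and $\gamma^2$ is the translation by one full period $n+3$. Every isometry preserving the band must preserve the set of lattice positions, up to the horizontal and vertical reflections of the band. So every symmetry is one of four kinds: a translation $(i,j)\mapsto(i+k,j+k)$; a reflection in the midline composed with a shift; a vertical mirror $(i,j)\mapsto(c-j,c-i)$; or a half-turn.

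\textbf{Key step: translations.} I will show that a translation by $k$ preserves all values if and only if the rotation $r^k$ of the polygon fixes $\tri$.
\begin{itemize}
\item The row of height $1$ records the quiddity sequence $a_i$, the number of triangles at vertex $i$.
\item The quiddity sequence determines $\tri$: repeatedly cut off an ear at a vertex with $a_i=1$.
\item Hence a translation that preserves values preserves the quiddity sequence, and therefore $r^k\tri=\tri$. The converse is immediate.
\end{itemize}
The same argument, applied to the full frieze or just to the quiddity row, will translate each other kind of symmetry.
\begin{itemize}
\item A vertical mirror in the frieze corresponds to a reflection of the polygon fixing $\tri$.
\item Composing a vertical mirror with $\gamma$ gives a half-turn of the band. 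The half-turn $(i,j)\mapsto(c-j,c-i+n+3)$ again corresponds to a reflection of the polygon.
\item Midline-reflection-type symmetries, other than $\gamma$ and its odd powers composed with translations, correspond to rotations of $\tri$ through an offset of half the period.
\end{itemize}

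\textbf{Main obstacle.} The hardest part is the bookkeeping that shows each symmetry type of the polygon produces exactly the right new frieze symmetry beyond $\langle\gamma\rangle$, and nothing more.

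I would first show that the only possible nontrivial rotational symmetry of $\tri$ relevant here is the half-turn $r^{(n+3)/2}$. A triangulation invariant under a rotation of order $m\ge3$ must contain a central triangle, which forces $m=3$. I need to check that this case gives nothing new. The $\mathbb{Z}/3$ rotation yields a translation by $(n+3)/3$, and then the smallest translation in $G$ changes. But the symbol ignores the translation length, and $\gamma$ composed with this translation is still a glide. So the group type is unchanged, and only the half-turn matters.

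Then I go through the cases.
\begin{itemize}
\item \textbf{Centrally symmetric.} The translation $\tau$ by half a period preserves values. Then $\gamma\tau^{-1}$ is a pure midline reflection, since the glide shift is $\tfrac{n+3}{2}$. A mirror along the midline with no vertical mirrors or half-turns gives $\infty{*}$.
\item \textbf{Bilateral.} A vertical mirror exists, and so does its composite with $\gamma$, which is a half-turn. This gives $2{*}\infty$.
\item \textbf{Both.} All three kinds of symmetry are present, giving ${*}22\infty$.
\item \textbf{Neither.} Only the powers of $\gamma$ and translations by multiples of the period occur, giving $\infty{\times}$.
\end{itemize}

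The delicate point is the converse: any vertical mirror or half-turn of the frieze must restrict to the quiddity row as a reversal. That reversal yields a reflection of $\tri$, via the uniqueness of the triangulation from its quiddity sequence.
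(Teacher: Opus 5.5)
Your proposal is correct and follows essentially the paper's route. The Conway--Coxeter glide is always present, a vertical mirror exists exactly when $\tri$ is bilateral because the quiddity row determines $\tri$ (Lemma~\ref{lem:equivariance}), and a pure equator mirror is the glide composed with the half-period translation, so it exists exactly when $\tri$ is centrally symmetric (Proposition~\ref{prop:reflections}); the symbol is then read off as in Lemma~\ref{lem:identification}, which makes your detour through order-$3$ rotations harmless but unnecessary.

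Only minor bookkeeping needs fixing: with height $j-i$ the quiddity sits at height $2$, not $1$. The half-turn $\gamma\circ(\text{mirror})$ reads $(i,j)\mapsto(c-i,\,c-j+n+3)$, not $(c-j,\,c-i+n+3)$. A half-turn carries the quiddity row to the bottom interior row rather than reversing it, so compose it with $\gamma^{-1}$ to get a vertical mirror before restricting.
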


Theorem~\ref{thm:intro-A} makes the observation of Conway and Coxeter precise.

Our second aim is to treat the non-simply-laced types, in which the unimodular rule is modified on one row of the array.
The key observation is that this modification excludes every symmetry interchanging the two boundary rows, with the single exception of type $C_3$, where the modified row is the central one.
The following is the second main theorem of this paper.

\begin{theorem}[Theorem~\ref{thm:main-BCFG} and Proposition~\ref{prop:C3-classification}]
\label{thm:intro-BCFG}
Let $F$ be a frieze pattern of type $B_n$ or $C_n$ $(n \ge 2)$, other than type $C_3$, or of type $F_4$ or $G_2$.
Then the Conway symbol of $F$ is ${*}\infty\infty$ if the first interior row of $F$ is palindromic, and $\infty\infty$ otherwise.
The exceptional type $C_3$ realises exactly the four symbols $\infty\infty$, ${*}\infty\infty$, $\infty{*}$ and ${*}22\infty$.
\end{theorem}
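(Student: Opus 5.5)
We argue at the level of the symmetry group $G$ of $F$ and use the structure of frieze groups. Place $F$ in the plane as in Definition~\ref{df:basic-domain}, with rows horizontal, the boundary rows of $1$'s at top and bottom, and entries on a diamond lattice. Every value-preserving isometry maps the band to itself, so it either preserves or swaps the two boundary lines. It also acts on the row index $i\in\{0,1,\dots,n+1\}$ either trivially or by $i\mapsto n+1-i$. The argument has three steps.

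\emph{Step 1: no vertical swap, except for $C_3$.} We first show that, apart from type $C_3$, no symmetry swaps the two boundary rows. In types $B_n$, $C_n$, $F_4$ and $G_2$ the unimodular rule $ad-bc=1$ is replaced on one distinguished row $k$ by a rule involving a factor $2$ (or $3$). The rows are read off from the definition of these friezes (folding of the $A$-type or $D$-type frieze, with the modified diamond at the row adjacent to the short root). A symmetry reversing row order sends row $k$ to row $n+1-k$. It therefore carries the modified diamonds to diamonds that must satisfy the same modified relation, so the row indices must coincide: $k=n+1-k$.

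We check the types one at a time.
\begin{itemize}
\item For $B_n$, $C_n$ with $n\ge2$, the modified row is the last interior one, or the one next to the boundary, and it is not central except in $C_3$. Here we must verify that the modification is not symmetric in the two diamonds it touches. For $B_2=C_2$, $G_2$ and $F_4$ this is a direct check.
\item Alternatively, one can compare the first interior row with the last interior row. The mesh relations force entries such as $m_{k}m_{k}'=2+\dots$, and this breaks the symmetry via a parity or divisibility argument.
\end{itemize}
It follows that $G$ contains only translations, horizontal reflections in vertical lines, and no half-turns, glides or horizontal mirror. Hence the symbol is ${*}\infty\infty$ or $\infty\infty$.

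\emph{Step 2: the palindrome criterion.} Since the whole frieze is determined by its first interior row via the recurrence, a vertical mirror exists iff that row is palindromic. Such a mirror sends row $i$ to row $i$ and reverses horizontal order. The mesh rule is symmetric under left–right reflection, including the modified rule, since it only involves the diamond structure. So a palindromic first row, with centre of palindrome at an entry or between entries, propagates to a mirror of the whole frieze. Conversely, any mirror restricts to a palindromic symmetry of row $1$. This gives ${*}\infty\infty$ versus $\infty\infty$.

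\emph{Step 3: type $C_3$.} Here the modified row is central, so vertical swaps are possible. We enumerate the finitely many $C_3$ friezes, which are periodic with small period (there are $\binom{6}{3}=20$ up to shift, coming from centrally symmetric triangulations of an octagon). For each one we test for a horizontal mirror, a glide and half-turns. We exhibit examples of $\infty\infty$, ${*}\infty\infty$, $\infty{*}$ and ${*}22\infty$.

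To rule out $2{*}\infty$, $22\infty$ and $\infty{\times}$, we show that a $C_3$ frieze always has the form in which the glide of type $A$ is converted. The relevant fact is that a reflection in the horizontal central axis combined with the folding forces any half-turn to come with a horizontal mirror. Concretely, we check the enumeration exhaustively.

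The main obstacle is Step 1: giving a uniform, clean argument that the modified rule is incompatible with row reversal for every type other than $C_3$, rather than a case-by-case check. We would try first to use the fact that the modified row has all entries satisfying a congruence or divisibility that its mirror row lacks.
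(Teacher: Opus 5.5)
Your Step~2 is the paper's Lemma~\ref{lem:x1-determines} and is fine. Step~1, which you yourself call the main obstacle, is not actually proved. The inference that the image of a modified sub-diamond ``must satisfy the same modified relation, so $k=n+1-k$'' is a non sequitur. The image is a sub-diamond centred on row $x_{n+1-k}$, so it obeys the \emph{classical} relation. Its values are those of the original sub-diamond, with apex and base exchanged and the side product unchanged. There is no contradiction unless these values cannot satisfy both relations at once, and if the raised vertex were $1$ the two relations would coincide. The missing step is short and uniform and needs no parity or divisibility argument. Write the modified relation as $ad-v^{e}w=1$, with $v$ the raised vertex; here $e=2$ or $3$ is an exponent, not a factor. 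The image gives $ad-vw=1$, so $v^{e}w=vw$ and $v=1$. But the row containing $v$ is not identically $1$, because the relation centred on that row makes the product of two consecutive entries at least $2$. Choosing a modified sub-diamond with $v\ge 2$ therefore gives the contradiction.

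Step~3 rests on the wrong model. In the paper's (Bourbaki) convention, centrally symmetric triangulations of the octagon fold to friezes of type $B_3$ (Proposition~\ref{prop:geometric-B}), not $C_3$; the $\binom{6}{3}=20$ you cite is that count, and it is a total, not a count up to shift. Their modified row is $x_3$, not the central row, so by the first half of the statement they realise only $\infty\infty$ and ${*}\infty\infty$. Your enumeration could therefore never produce $\infty{*}$ or ${*}22\infty$. Type $C_3$ is instead the fold of $D_4$: the friezes of type $D_4$ with coinciding leg rows, $21$ in all and six up to translation (Table~\ref{tab:D4}). These include non-unitary friezes such as the constant one $(2,3,2)$.

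Nor is an enumeration needed to exclude $\infty{\times}$, $22\infty$ and $2{*}\infty$. A boundary-exchange fixes the central modified row $x_2$. It sends each modified sub-diamond, with apex $w$ in $x_1$ and squared base $v$ in $x_3$, to a modified sub-diamond with the same side product and with $v,w$ exchanged. Hence $ad-wv^{2}=1=ad-vw^{2}$, so $vw(v-w)=0$ and $v=w$, i.e.\ $x_{3,j}=x_{1,j+1}$ for all $j$. That is exactly the condition for the equator reflection, which fixes $x_2$ pointwise and sends $P(1,j)$ to $P(3,j-1)$, to be a symmetry. So every boundary-exchange comes with the equator mirror, and the symbol is $\infty{*}$ or ${*}22\infty$, separated by the vertical reflection. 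The first diagonals $(1,1,1)$, $(1,2,1)$, $(1,1,2)$ and $(2,3,2)$ then realise all four symbols. Your sentence that ``any half-turn comes with a horizontal mirror'' is the right claim, but this computation is what proves it.
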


We read Cartan matrices in the convention of Bourbaki, which interchanges $B$ and $C$ relative to~\cite{FoP, Zha} \textup{(}see the remark after Example~\ref{ex:types}\textup{)}, and in particular our $C_3$ is their $B_3$.
For type $B_n$ we refine Theorem~\ref{thm:intro-BCFG} by identifying friezes of type $B_n$ with centrally symmetric triangulations of a convex $(2n+2)$-gon, the symbol ${*}\infty\infty$ corresponding to the bilateral ones \textup{(}Proposition~\ref{prop:geometric-B}\textup{)}.

Our third aim is type $D_n$, whose Dynkin diagram has a branch.
The trivalent row of the array and one of the two leg rows are interleaved on one horizontal line, the bifurcation line, and for $n \ge 5$ this line is not central, which again excludes the symmetries interchanging the boundary rows.
Following Fontaine and Plamondon~\cite{FoP}, we attach to a frieze pattern $F$ of type $D_n$ a tagged triangulation $T_F$ of the once-punctured $n$-gon $\mathcal{P}_n$, whose $m$ puncture edges carry a common value $d$ \textup{(}Lemma~\ref{lem:FoP}\textup{)}.
The following is the third main theorem of this paper.

\begin{theorem}[Propositions~\ref{prop:D-occurrence} and~\ref{prop:D-geometric}]
\label{thm:intro-D}
Let $F$ be a frieze pattern of type $D_n$.
\begin{enumerate}
\item If $n \ge 5$, then the Conway symbol of $F$ is ${*}\infty\infty$ if $T_F$ is invariant under a reflection of $\mathcal{P}_n$ through a boundary vertex, or if $n$ is even, $d^{2} = m$ and $T_F$ is invariant under a reflection through two edge midpoints, and $\infty\infty$ otherwise.
\item Type $D_4$ realises exactly the five symbols $\infty\infty$, ${*}\infty\infty$, $\infty{*}$, $2{*}\infty$ and ${*}22\infty$.
\end{enumerate}
\end{theorem}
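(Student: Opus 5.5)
The plan is to reduce everything to symmetries of the frieze group that preserve values and to how these act on the rows. I will place the frieze of type $D_n$ in the plane as in Definition~\ref{df:basic-domain}. Every value-preserving isometry maps the band to itself, so it either preserves or interchanges the two boundary rows. It also either preserves or reverses the horizontal direction.

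\textbf{Case $n\ge 5$.} I first rule out every isometry that interchanges the boundary rows, namely half-turns, the horizontal mirror and glide reflections. Such a map must send the bifurcation line (which carries the trivalent row interleaved with one leg row) to its mirror image across the central line. For $n\ge5$ that mirror line carries an ordinary single row. So it suffices to check that a leg row, where the entries with exchange relation $ad-bc=1$ are modified by the $D$-type rule, cannot be mapped onto an ordinary row. I expect this to be immediate, since the combinatorial positions differ; it is the same argument as for $B_n$, $C_n$ in Theorem~\ref{thm:main-BCFG}.

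Hence the group consists of translations together with possibly vertical mirrors. Its symbol is therefore ${*}\infty\infty$ if there is a vertical mirror and $\infty\infty$ otherwise. A vertical mirror reverses the first interior row, and by the frieze recurrence it is determined by a palindromic symmetry of that row. The remaining step is to translate this into a condition on $T_F$ through Lemma~\ref{lem:FoP}. A vertical mirror corresponds to a reflection of $\mathcal{P}_n$. If the mirror axis passes through a quiddity entry, the reflection fixes a boundary vertex. If the axis lies between two entries, the reflection passes through two edge midpoints, which forces $n$ even.

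The subtle point, and the main obstacle, is the two legs at the branch. A reflection of $\mathcal{P}_n$ reverses the cyclic order around the puncture, so it may exchange the roles of the two leg rows (plain versus tagged arcs). In the edge-midpoint case, the reflected frieze is value-equivalent only if the values on the two legs agree. I plan to compute, using Fontaine--Plamondon's description, that the leg values are $d$ and $m/d$. Equality then gives the condition $d^2=m$. In the vertex case I expect the reflection to fix the leg assignment automatically, so no condition arises. I would verify both claims by tracking a mesh at the bifurcation line explicitly.

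\textbf{Case $D_4$.} Here the bifurcation line is central, and the three outer nodes are permuted by triality, so interchanging the boundary rows becomes possible. I would list the generators directly. A frieze of type $D_4$ is determined by a finite slice, and by Lemma~\ref{lem:FoP} there are finitely many of them, corresponding to tagged triangulations of $\mathcal{P}_4$ with their values. I will check each candidate symmetry type, half-turn, horizontal mirror and glide, against the central-row constraint. I expect $\infty{\times}$ and $22\infty$ to be excluded because the central line is interleaved: a glide or half-turn would need to fix the two leg rows in a way incompatible with the one-step offset. Finally I will exhibit explicit friezes realising each of the five symbols. This last part is a finite, largely computational check.
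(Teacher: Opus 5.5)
Your outline for $n\ge5$ has the same shape as the paper's proof: no boundary-exchange, so the symbol is decided by a vertical mirror, which is then read off $(T_F,d)$. Two small points first. The boundary-exchange step is really the density argument of Lemma~\ref{lem:D-no-boundary}: the bifurcation line carries two entries per unit and the line $y=-2$ carries one. It is not the algebraic $B_n$/$C_n$ argument.

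\textbf{Gap 1 (vertical mirrors for $n\ge5$).} The genuine gap is the step that carries the content, passing from a vertical mirror to a symmetry of $(T_F,d)$.
\begin{enumerate}
\item Your claim that a vertical mirror is determined, through the recurrence, by a palindromic symmetry of $x_1$ is false in type $D_n$. Descending from $x_1$, the relation at the trivalent node only yields the product $x_{n-1,j}x_{n,j}$, so $x_1$ does not determine the leg rows. Remark~\ref{rmk:x1-insufficient} gives a frieze of type $D_8$ with palindromic $x_1$ and no vertical mirror. It shares $x_1,\dots,x_6$ with the $d=2$ frieze of Remark~\ref{rmk:even-n}, which does have one. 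So ``translate palindromicity of $x_1$ into a condition on $T_F$'' cannot work.
\item ``A vertical mirror corresponds to a reflection of $\mathcal{P}_n$'' is asserted, not proved. The parity rule you attach to it is also wrong for odd $n$. In Example~\ref{ex:D5-spokes} the alternating leg rows force every mirror axis onto an integer abscissa, hence between two entries of $x_1$. Yet every reflection of $\mathcal{P}_5$ passes through a vertex.
\end{enumerate}
What the proof needs is the following.
\begin{enumerate}
\item Every vertical mirror acts on tagged edges as a reflection of $\mathcal{P}_n$, with or without the tag reversal. The paper gets this from the Chang--Zhu classification in Lemma~\ref{lem:G3}; you could instead read it off directly from the positions in Lemma~\ref{lem:Sch}.
\item One must determine which of these keep each leg row on its own line (Lemma~\ref{lem:G2}).
\item One needs the invariance criterion of Lemma~\ref{lem:D-equivariance}.
\end{enumerate}
Your ``the leg values $d$ and $m/d$ must agree'' only gives necessity, and only on the $m$ puncture edges of $T_F$. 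For sufficiency you must show that $F$ composed with the tag reversal has data $(T_F,m/d)$. This uses Lemma~\ref{lem:value-one} to exclude notched edges of value $1$ at other vertices, followed by uniqueness of the data.

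\textbf{Gap 2 (type $D_4$).} Your reason for excluding $\infty{\times}$ and $22\infty$ is wrong. Glides along the equator and half-turns do occur in type $D_4$. The frieze with first diagonal $(1,3,2,1)$ in Example~\ref{ex:D4-boundary} has a glide of shift $1$, and its group $2{*}\infty$ contains half-turns. An argument forbidding glides and half-turns would therefore also forbid a symbol you need to realise. Nothing local rules out a glide-only or a half-turn-only group. The paper gets the five-symbol list from three ingredients:
\begin{enumerate}
\item the mirror and glide criteria of Proposition~\ref{prop:D4};
\item the observation that an equatorial mirror and a glide cannot coexist, since their composite would be a translation by half the period;
\item an exhaustive enumeration of the $51$ friezes of type $D_4$, fifteen up to translation (Table~\ref{tab:D4}).
\end{enumerate}
Your fallback ``finite computational check'' is what actually carries this part, and it has to be done in full.
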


For odd $n$ the condition in Theorem~\ref{thm:intro-D}\,\textup{(1)} is the bilateral symmetry of $T_F$, whereas for even $n$ it can be strictly finer, as happens already for $n = 8$ \textup{(}Remark~\ref{rmk:even-n}\textup{)}.
Finally, we show that a frieze pattern of type $C_n$ is a frieze pattern of type $D_{n+1}$ whose two leg rows coincide, with one of them deleted, and that this folding preserves the Conway symbol \textup{(}Proposition~\ref{prop:fold}\textup{)}.
By Theorem~\ref{thm:intro-D}, we obtain a geometric criterion for type $C_n$ \textup{(}Corollary~\ref{cor:Cn-geometric}\textup{)}, and the exceptional types $C_3$ and $D_4$ are explained at once by the triality of $D_4$.
Table~\ref{tab:summary} at the end of the paper summarises the classification, in which the symbol $22\infty$ occurs in no type and the symbol $\infty{\times}$ occurs in type $A_n$ alone.
The types $E_6$, $E_7$ and $E_8$ lie beyond our methods, and we leave them to future work.

\section*{Acknowledgements}
The authors thank M. Wakui for introducing us to this subject.
We also thank K. Hashimoto for his Bachelor's thesis at Kansai University \textup{(}in Japanese, 2019\textup{)}.
He classified the Conway symbols in type $A_n$ for $n \le 5$, and motivated the present generalization.
This work was supported by JSPS KAKENHI Grant Number 24K16885.

\section{Preliminaries}
\label{sec:prelim}

\subsection{Frieze patterns}
\label{ssec:FP}

Let $d = (d_1, d_2, \dots, d_n)$ be an $n$-tuple of positive integers, and let $B = [b_{i,j}]$ be an $n \times n$ integer matrix satisfying
\[
  d_i b_{i,j} = - d_j b_{j,i} \quad (1 \le i, j \le n).
\]
We call $B$ a \emph{skew-symmetrizable matrix} with symmetrizer $d$.
Its \emph{generalized Cartan matrix} is $A = [a_{i,j}]$, with $a_{i,i} = 2$ and $a_{i,j} = -|b_{i,j}|$ for $i \neq j$.

\begin{definition}[{\cite{ARS, FZ1, FZ2}}]
\label{df:FP}
A \emph{frieze pattern} of the corresponding type is an array $(x_{i,j})_{1 \le i \le n,\ j \in \Z}$ of positive integers satisfying \emph{the unimodular relations}
\[
  x_{i,j}\, x_{i,j+1}
  = 1 + \prod_{k < i} x_{k,\,j+1}^{\,|a_{k,i}|}\,
        \prod_{k > i} x_{k,\,j}^{\,|a_{k,i}|}
  \qquad (1 \le i \le n,\ j \in \Z),
\]
the empty products being $1$, and the array accordingly bordered above and below by rows of $1$s.
The $j$-th \emph{diagonal} of the array is the $n$-tuple $(x_{1,j}, \dots, x_{n,j})$.
\end{definition}

\begin{remark}
\label{rmk:FP-provenance}
The unimodular relations are the exchange relations of the cluster algebra of the corresponding type, obtained by mutating successively at the indices $k$ with $b_{k,i} \le 0$ for all $i$, starting from an initial seed $(x_1, \dots, x_n)$~\cite{FZ1, FZ2, ARS}. Following~\cite{GS} we call a frieze \emph{unitary} when it takes the value $1$ on a whole cluster, the all-ones first diagonal $x_{i,1} = 1$ being the case of the initial seed.
In the finite Dynkin types treated here the array is periodic of finite width by the finite-type classification~\cite{FZ1, FZ2}.
We call the least $p > 0$ with $x_{i,j+p} = x_{i,j}$ for all $i$ and $j$ the \emph{period} of the frieze.
\end{remark}

\begin{example}
\label{ex:types}
Each of the types $A_n$, $B_n$, $C_n$, $F_4$ and $G_2$ is specified by its symmetrizer and generalized Cartan matrix $A$.
For each we then give the unitary frieze \textup{(}all-ones first diagonal\textup{)} it generates, for $n = 3$ in the three families, drawn in diamond form between rows of $1$s.

Type $A_n$ ($n \ge 1$) has symmetrizer $(1, 1, \dots, 1)$ and generalized Cartan matrix
\[
  A = \begin{pmatrix}
    2 & -1 & & & \\
    -1 & 2 & -1 & & \\
    & \ddots & \ddots & \ddots & \\
    & & -1 & 2 & -1 \\
    & & & -1 & 2
  \end{pmatrix}.
\]
For $n = 3$ the all-ones first diagonal generates the following frieze. 
\[\begin{array}{*{18}{c}}
  1 &  & 1 &  & 1 &  & 1 &  & 1 &  & 1 &  & 1 &  &  &  &  & \cdots \\
   & 1 &  & 2 &  & 2 &  & 2 &  & 1 &  & 4 &  & 1 &  &  &  & \cdots \\
   &  & 1 &  & 3 &  & 3 &  & 1 &  & 3 &  & 3 &  & 1 &  &  & \cdots \\
   &  &  & 1 &  & 4 &  & 1 &  & 2 &  & 2 &  & 2 &  & 1 &  & \cdots \\
   &  &  &  & 1 &  & 1 &  & 1 &  & 1 &  & 1 &  & 1 &  & 1 & \cdots \\
\end{array}\]

Type $B_n$ ($n \ge 2$) has symmetrizer $(1, 1, \dots, 1, 2)$ and generalized Cartan matrix
\[
  A = \begin{pmatrix}
    2 & -1 & & & \\
    -1 & 2 & -1 & & \\
    & \ddots & \ddots & \ddots & \\
    & & -1 & 2 & -2 \\
    & & & -1 & 2
  \end{pmatrix}.
\]
For $n = 3$ the all-ones first diagonal generates the following frieze.
\[\begin{array}{*{16}{c}}
  1 &  & 1 &  & 1 &  & 1 &  & 1 &  & 1 &  &  &  &  & \cdots \\
   & 1 &  & 2 &  & 2 &  & 4 &  & 1 &  & 2 &  &  &  & \cdots \\
   &  & 1 &  & 3 &  & 7 &  & 3 &  & 1 &  & 3 &  &  & \cdots \\
   &  &  & 1 &  & 10 &  & 5 &  & 2 &  & 1 &  & 10 &  & \cdots \\
   &  &  &  & 1 &  & 1 &  & 1 &  & 1 &  & 1 &  & 1 & \cdots \\
\end{array}\]

Type $C_n$ ($n \ge 2$) has symmetrizer $(2, 2, \dots, 2, 1)$ and generalized Cartan matrix
\[
  A = \begin{pmatrix}
    2 & -1 & & & \\
    -1 & 2 & -1 & & \\
    & \ddots & \ddots & \ddots & \\
    & & -1 & 2 & -1 \\
    & & & -2 & 2
  \end{pmatrix}.
\]
For $n = 3$ the all-ones first diagonal generates the following frieze.
\[\begin{array}{*{16}{c}}
  1 &  & 1 &  & 1 &  & 1 &  & 1 &  & 1 &  &  &  &  & \cdots \\
   & 1 &  & 2 &  & 2 &  & 6 &  & 1 &  & 2 &  &  &  & \cdots \\
   &  & 1 &  & 3 &  & 11 &  & 5 &  & 1 &  & 3 &  &  & \cdots \\
   &  &  & 1 &  & 4 &  & 3 &  & 2 &  & 1 &  & 4 &  & \cdots \\
   &  &  &  & 1 &  & 1 &  & 1 &  & 1 &  & 1 &  & 1 & \cdots \\
\end{array}\]

Type $F_4$ has symmetrizer $(1, 1, 2, 2)$ and generalized Cartan matrix
\[
  A = \begin{pmatrix}
    2 & -1 & 0 & 0 \\
    -1 & 2 & -2 & 0 \\
    0 & -1 & 2 & -1 \\
    0 & 0 & -1 & 2
  \end{pmatrix}.
\]
The all-ones first diagonal generates the following frieze, of period $7$.
\[\begin{array}{*{21}{c}}
1 &  & 1 &  & 1 &  & 1 &  & 1 &  & 1 &  & 1 &  & 1 &  &  &  &  &  & \cdots \\
 & 1 &  & 2 &  & 2 &  & 4 &  & 8 &  & 3 &  & 5 &  & 1 &  &  &  &  & \cdots \\
 &  & 1 &  & 3 &  & 7 &  & 31 &  & 23 &  & 14 &  & 4 &  & 1 &  &  &  & \cdots \\
 &  &  & 1 &  & 10 &  & 54 &  & 89 &  & 107 &  & 11 &  & 3 &  & 1 &  &  & \cdots \\
 &  &  &  & 1 &  & 11 &  & 5 &  & 18 &  & 6 &  & 2 &  & 2 &  & 1 &  & \cdots \\
 &  &  &  &  & 1 &  & 1 &  & 1 &  & 1 &  & 1 &  & 1 &  & 1 &  & 1 & \cdots \\
\end{array}\]

Type $G_2$ has symmetrizer $(3, 1)$ and generalized Cartan matrix
\[
  A = \begin{pmatrix} 2 & -1 \\ -3 & 2 \end{pmatrix}.
\]
The all-ones first diagonal generates the following frieze, of period $4$.
\[\begin{array}{*{13}{c}}
1 &  & 1 &  & 1 &  & 1 &  & 1 &  &  &  & \cdots \\
 & 1 &  & 2 &  & 14 &  & 9 &  & 1 &  &  & \cdots \\
 &  & 1 &  & 3 &  & 5 &  & 2 &  & 1 &  & \cdots \\
 &  &  & 1 &  & 1 &  & 1 &  & 1 &  & 1 & \cdots \\
\end{array}\]
\end{example}

We read the Cartan matrix in the convention $a_{i,j} = \langle \alpha_i, \alpha_j^{\vee} \rangle$ of Bourbaki, under which the node with the larger entry of the symmetrizer carries the short root, and the matrices of $B_n$ and $C_n$ are transposes of one another.
The cluster algebra literature \cite{FZ1, FZ2, FoP, Zha} reads the transposed matrix, and accordingly calls our $B_n$ and $C_n$ the types $C_n$ and $B_n$.

\subsection{Frieze patterns of type \texorpdfstring{$A_n$}{A\_n}}
\label{ssec:CC}

We apply Definition~\ref{df:FP} to the type $A_n$ data of Example~\ref{ex:types}.
Since $d_i = 1$ for all $i$, the matrix $B$ is skew-symmetric, with
\[
  b_{i, i+1} = -b_{i+1, i} = -1 \quad (1 \le i \le n-1),
  \qquad b_{ij} = 0 \quad \text{otherwise.}
\]
In this case the unimodular relations of Definition~\ref{df:FP} amount to the classical \emph{unimodular rule}, which requires that every four entries arranged in a diamond
\[
\begin{array}{ccc}
    & b &  \\
   a &   & d  \\
    & c &
\end{array}
\]
satisfy $ad - bc = 1$.
Explicitly, writing $x_i$ for $x_{i,j}$ and $x'_i$ for $x_{i,j+1}$ in two consecutive columns, the relations read
\[
  x_i x'_i = x'_{i-1} x_{i+1} + 1 \qquad (1 \le i \le n),
\]
under the convention $x'_0 = x_{n+1} = 1$.

Frieze patterns of type $A_n$ admit a second, classical description, due to Conway and Coxeter, which starts from a quiddity sequence.

\begin{definition}
\label{df:CC}
Let $q = (q_0, \dots, q_{N-1})$ be a sequence of positive integers, extended periodically by $q_{j+N} = q_j$.
We call $q$ a \emph{quiddity sequence} and $N$ its \emph{period}.
\emph{The Conway--Coxeter frieze} generated by $q$ is the array obtained by placing a row of $0$s, a row of $1$s offset by a half-step, the sequence $q$ below this row, and extending downward by the unimodular rule.
It is a \emph{positive frieze of width $m$} if it is bounded above and below by rows of $1$s (with rows of $0$s outside), the $m$ rows in between consisting of positive entries.
\end{definition}

By a theorem of Coxeter~\cite{Cox}, a positive frieze of width $m$ is invariant under the translation by $N = m + 3$ columns.

Quiddity sequences arise from triangulated polygons.
For a triangulation $\tri$ of a convex $N$-gon with $N \ge 4$, whose vertices $v_0, v_1, \dots, v_{N-1}$ are labelled in clockwise order, let $q_j$ denote the number of triangles of $\tri$ incident to $v_j$.
We call $q = (q_0, q_1, \dots, q_{N-1})$ \emph{the quiddity sequence} of $\tri$.

\begin{remark}
\label{rmk:equiv-FP-CC}
The frieze patterns of type $A_n$ of Definition~\ref{df:FP} and the Conway--Coxeter friezes of Definition~\ref{df:CC} are the same objects, and we use the two names interchangeably.
The unitary frieze is reproduced by extending its first interior row downward by the unimodular rule, the construction of Definition~\ref{df:CC}.
We refer to~\cite[Section~3]{ARS} for details.
\end{remark}

The next theorem of Conway and Coxeter is fundamental.

\begin{theorem}[{\cite{CC1, CC2}}]
\label{thm:CC}
For $m \ge 1$, sending a triangulation of a labelled convex $(m+3)$-gon to the frieze generated by its quiddity sequence as in \textup{Definition~\ref{df:CC}} is a bijection onto the positive friezes of width $m$.
\end{theorem}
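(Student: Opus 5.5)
The plan is to prove the statement in two halves, injectivity and surjectivity, using the unimodular rule and the combinatorics of ears.

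\emph{Well-definedness and positivity.} First I check that the quiddity sequence $q$ of a triangulation $\tri$ of an $(m+3)$-gon generates a positive frieze of width $m$. For this I give a closed formula for the entries. Label the entry in the diagonal strip between vertices $v_i$ and $v_j$ by $m_{i,j}$. Define $m_{i,j}$ combinatorially: $m_{i,i}=0$, $m_{i,i+1}=1$, and in general $m_{i,j}$ is the number of ways to match the vertices strictly between $v_i$ and $v_j$ (on one side) to distinct triangles incident to them (the Broline--Crowe--Isaacs count).

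Then I show that these numbers satisfy the rule $m_{i,j}m_{i+1,j+1}-m_{i,j+1}m_{i+1,j}=1$. This is a Ptolemy-type identity, proved by induction on the polygon: cut off an ear at a vertex $v_k$ with $q_k=1$, and use the fact that the triangulated $(m+2)$-gon gives its own frieze. Since $m_{i,i+2}=q_{i+1}$, the second row is $q$. The counts are positive for $i+1<j<i+m+3$ and equal $1$ for $j=i+m+2$ (the unique matching), so the array is bounded by rows of $1$s at width $m$.

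\emph{Injectivity.} The frieze determines its second row, which is $q$. In turn $q$ determines $\tri$, again by induction on ears. A vertex with $q_k=1$ is an ear tip. Removing it gives the quiddity sequence of the smaller triangulation: decrease $q_{k-1}$ and $q_{k+1}$ by one and delete $q_k$. So $\tri$ is recovered, since the diagonal $v_{k-1}v_{k+1}$ must be present.

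\emph{Surjectivity.} This is the main obstacle. Given a positive frieze of width $m$ with quiddity row $q$, I must show that $q$ comes from a triangulation. First I show some $q_k=1$.

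\begin{itemize}
\item Suppose all $q_k\ge 2$. The entries along a diagonal satisfy $x_{s+1}=q\,x_s-x_{s-1}$ with $x_0=0$ and $x_1=1$, so they are nondecreasing (indeed $x_{s+1}-x_s\ge x_s-x_{s-1}\ge 1$).
\item That contradicts the diagonal returning to the value $1$ at row $m+1$ when $m\ge1$. So some $q_k=1$.
\item Next, $q_{k\pm1}\ge2$ unless $m=1$. Otherwise the unimodular rule near the $1$ forces a $0$ entry inside the frieze.
\end{itemize}

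Then I define the reduced sequence $q'$ by deleting $q_k$ and decreasing its neighbours. I check that $q'$ generates a positive frieze of width $m-1$. Concretely, deleting the diagonal through the $1$ and gluing the two neighbouring diagonals yields an array that again satisfies the unimodular rule; this is verified by a local computation with $x_{s}$-recurrences. By induction $q'$ comes from a triangulation of an $(m+2)$-gon. Gluing an ear at the right place realises $q$, and then the frieze equals the one built from that triangulation, because the frieze is determined by its quiddity row. The base case $m=1$ (square, $q=(1,2,1,2)$ or $(2,1,2,1)$) is checked directly.
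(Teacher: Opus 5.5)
The paper does not prove this statement; it quotes it from Conway and Coxeter, so your outline has to stand on its own. Three parts are fine in outline:
\begin{itemize}
\item well-definedness via the Broline--Crowe--Isaacs counts;
\item injectivity via ears;
\item the existence of some $q_k=1$. The coefficient in the diagonal recurrence is the quiddity entry of the crossing diagonal and varies with $s$, but you only use that every coefficient is at least $2$.
\end{itemize}
The gap is the reduction step of surjectivity, which is the whole content of that direction, and as you describe it, it fails. Deleting a diagonal through the entry $q_k=1$ and gluing its two neighbours does not reduce the number of rows, and it breaks the unimodular rule. For the pentagon of Example~\ref{ex:CC-frieze}, $q=(3,1,2,2,1)$, the column through $q_1=1$ is $(1,1,1,1)$. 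Gluing the neighbouring columns $(1,3,2,1)$ and $(1,2,3,1)$ creates a diamond with sides $3,2$, apex $1$ and base $2$, where $ad-bc=4$. The two diagonals through $q_k$ are $m_{k-1,\cdot}$ and $m_{\cdot,k+1}$. They belong to the vertices $v_{k-1}$ and $v_{k+1}$, not to the ear tip $v_k$.

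The correct surgery is the one your own ear computation for the BCI counts suggests, since cutting the ear at $v_k$ leaves every $m_{a,b}$ with $a,b\neq k$ unchanged. Remove the two diagonals $m_{k,\cdot}$ and $m_{\cdot,k}$ issuing from the $0$ directly above $q_k$; they pass through $q_{k+1}$ and $q_{k-1}$. Then raise the wedge below that $0$ by one row. This way $q_k=1$ joins the new upper row of $1$s, and $m_{k-2,k+1}=q_{k-1}-1$ and $m_{k-1,k+2}=q_{k+1}-1$ join the new quiddity row. Positivity is then automatic, all entries being old ones. Unimodularity across the seam is a genuinely local check, from $m_{a,k}=m_{a,k-1}+m_{a,k+1}$, the diagonal recurrence with coefficient $q_k=1$.

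One entry, however, is not local. The new lower row of $1$s contains the old last-interior-row entry $m_{k+1,k+N-1}$, and the recurrence only gives $m_{k+1,k+N-1}=q_{k+N}$. To conclude that it equals $q_k=1$ you need Coxeter's $N$-periodicity, equivalently the glide symmetry of Theorem~\ref{thm:glide}. The same fact is also what entitles you to treat the quiddity row of an arbitrary positive frieze of width $m$ as an $(m+3)$-tuple at all, and hence to speak of deleting $q_k$ and inducting on an $(m+2)$-gon. Your proposal invokes neither, so the claim that $q'$ generates a positive frieze of width $m-1$ is not established as written.
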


Before stating the glide-reflection symmetry, we fix coordinates on a positive frieze and embed it in the plane, the symmetries thereby becoming Euclidean isometries.

\begin{definition}
\label{df:basic-domain}
Let $F(q)$ be a positive frieze of period $N$, generated by a quiddity sequence $q = (q_0, \dots, q_{N-1})$.
We index its rows by $i \in \{0, 1, \dots, N-2\}$ from top to bottom and its columns by $j \in \Z$, the column $j$ consisting of the entries of the $j$-th diagonal, and the $(i,j)$-entry $\zeta_{i,j}$ satisfies $\zeta_{i, j+N} = \zeta_{i, j}$.
We embed $F(q)$ in the plane by placing the entry $\zeta_{i,j}$ at the point
\[
  P(i, j) = \left( j + \dfrac{i}{2},\ -i \right) \in \mathbb{R}^2.
\]
\emph{The equator} of $F(q)$ is the horizontal line $y = -\frac{N-2}{2}$, midway between the images of the two boundary rows.
\end{definition}

In this indexing, row $0$ and row $N-2$ are the boundary rows of $1$s, rows $1, \dots, N-3$ are the interior rows, and the quiddity sequence occupies row $1$.

\begin{example}
\label{ex:CC-frieze}
Consider the triangulation of a convex pentagon with vertices $v_0, v_1, v_2, v_3, v_4$ obtained by drawing the diagonals $v_0 v_2$ and $v_0 v_3$.
\begin{center}
\begin{tikzpicture}[scale=1.6, every node/.style={font=\small}]
  \foreach \i/\ang in {0/90, 1/18, 2/-54, 3/-126, 4/162}
    \coordinate (v\i) at (\ang:1);
  \draw (v0) -- (v1) -- (v2) -- (v3) -- (v4) -- cycle;
  \draw (v0) -- (v2);
  \draw (v0) -- (v3);
  \foreach \i in {0,...,4} \fill (v\i) circle (1pt);
  \node[above] at (v0) {$v_0$};
  \node[right] at (v1) {$v_1$};
  \node[below right] at (v2) {$v_2$};
  \node[below left] at (v3) {$v_3$};
  \node[left] at (v4) {$v_4$};
  \node at (90:0.64) {$3$};
  \node at (18:0.64) {$1$};
  \node at (-54:0.60) {$2$};
  \node at (-126:0.60) {$2$};
  \node at (162:0.64) {$1$};
\end{tikzpicture}
\end{center}
Then its quiddity sequence is $q = (3, 1, 2, 2, 1)$.
Embedded in the plane as in Definition~\ref{df:basic-domain}, $F(q)$ becomes the band below, with the equator dashed.
\begin{center}
\begin{tikzpicture}[scale=1.2, font=\small, num/.style={fill=white, inner sep=1.3pt}]
  \foreach \y in {0,-1,-2,-3}
    \draw[gray!45, {Latex[length=3.5pt]}-{Latex[length=3.5pt]}] (-0.9,\y)--(7.3,\y);
  \draw[densely dashed, thick] (-0.9,-1.5)--(7.3,-1.5);
  \node[num] at (6.85,-1.5) {\itshape equator};
  \foreach \i/\y in {0/0,1/-1,2/-2,3/-3}
    \node[gray!70, anchor=east, font=\footnotesize] at (-1.05,\y) {$i=\i$};
  \node[gray!70, font=\footnotesize] at (-0.35,0.55) {$j\to-\infty$};
  \node[gray!70, font=\footnotesize] at (6.85,0.55) {$j\to+\infty$};
  \draw[gray, dotted] (0,-0.18)--(0,-1);
  \draw[gray, -{Latex[length=3.5pt]}] (0,-1)--(0.42,-1);
  \node[gray, font=\footnotesize] at (0.22,-0.72) {$\frac12$};
  \foreach \x in {0,1,2,3,4,5} \node[num] at (\x,0) {$1$};
  \foreach \x/\v in {0.5/3,1.5/1,2.5/2,3.5/2,4.5/1,5.5/3} \node[num] at (\x,-1) {$\v$};
  \foreach \x/\v in {1/2,2/1,3/3,4/1,5/2,6/2} \node[num] at (\x,-2) {$\v$};
  \foreach \x in {1.5,2.5,3.5,4.5,5.5,6.5} \node[num] at (\x,-3) {$1$};
  \node[anchor=south] at (3.25,0.95) {$P(i,j)=\bigl(j+\frac{i}{2},\,-i\bigr)$};
\end{tikzpicture}
\end{center}
\end{example}

Crucial for our analysis is the following glide-reflection symmetry of Conway--Coxeter friezes, due to Conway and Coxeter~\cite{CC1, CC2}.

\begin{theorem}[\cite{CC1, CC2}]
\label{thm:glide}
Let $q$ be a quiddity sequence of period $N \ge 4$ and $F(q)$ the associated Conway--Coxeter frieze.
The embedded frieze is invariant under the glide reflection
\[
  g(x, y) = \left( x + \dfrac{N}{2},\ -(N-2) - y \right),
\]
the reflection about the equator followed by the horizontal translation by $\frac{N}{2}$, which in indices reads $\zeta_{i,j} = \zeta_{N-2-i,\,j+i+1}$.
\end{theorem}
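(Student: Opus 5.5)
The plan is to reduce the statement to the index identity $\zeta_{i,j} = \zeta_{N-2-i,\,j+i+1}$ and then check that identity directly. First I confirm that the Euclidean map $g$ matches the index identity. The entry $\zeta_{i,j}$ sits at $P(i,j) = (j + i/2, -i)$, and
\[
  g(P(i,j)) = \left(j + \tfrac{i}{2} + \tfrac{N}{2},\ -(N-2-i)\right).
\]
Setting $i' = N-2-i$ and solving $j' + i'/2 = j + i/2 + N/2$ gives $j' = j + i + 1$. So $g(P(i,j)) = P(N-2-i,\, j+i+1)$, and invariance of the embedded frieze under $g$ is exactly the claimed identity. It therefore suffices to prove $\zeta_{i,j} = \zeta_{N-2-i,\,j+i+1}$ for all $i \in \{0,\dots,N-2\}$ and $j\in\Z$.

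To prove the identity I will use the standard closed form of a Conway--Coxeter frieze in terms of the quiddity sequence. Let $m_{a,b}$ denote the entry lying on the diagonal through positions $a$ and $b$. Define continuants $m_{a,b}$ by $m_{a,a}=0$, $m_{a,a+1}=1$ and
\[
  m_{a,c+1} = q_c\, m_{a,c} - m_{a,c-1}.
\]
Then one checks, by induction on rows using the unimodular rule $ad-bc=1$ applied to the diamond rule, that the entry in row $i$ and column $j$ equals $m_{a,b}$ for an explicit pair with $b-a = i+1$. The precise affine relation between $(i,j)$ and $(a,b)$ will be fixed by the conventions of Definition~\ref{df:basic-domain}: row $0$ must give $m_{a,a+1}=1$, and row $1$ must give $m_{a,a+2}=q_{a+1}$. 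Under this dictionary the target identity becomes the symmetry
\[
  m_{a,b} = m_{b,a+N},
\]
since the pair for $(N-2-i, j+i+1)$ should come out as $(b, a+N)$.

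The main obstacle is proving $m_{a,b}=m_{b,a+N}$, which requires positivity and width to force the frieze to close up. My first attempt will use the matrix formulation. Put
\[
  M(q) = \begin{pmatrix} q & -1\\ 1 & 0\end{pmatrix},
\]
so that a product of consecutive matrices $M(q_c)$ has entries $\pm m_{\cdot,\cdot}$. The frieze having a bottom row of $1$s followed by a row of $0$s means $m_{a,a+N-1}=1$ and $m_{a,a+N}=0$ for all $a$. Together with the recursion, this gives
\[
  M(q_{N-1})\cdots M(q_0) = -I.
\]
From $-I$ one reads off the antisymmetry $m_{b,a+N} = -m_{a+N,b+N}\cdot(-1)$. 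Equivalently, splitting the full product at positions $a$ and $b$ expresses one partial product as minus the inverse of the complementary product. Comparing entries, together with $\det M = 1$, then gives $m_{a,b}=m_{b,a+N}$.

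Alternatively, I would use the triangulation description via Theorem~\ref{thm:CC}. There $m_{a,b}$ counts the matchings between the vertices strictly between $v_a$ and $v_b$ and the triangles of $\tri$ (Broline--Crowe--Isaacs). This count is manifestly symmetric in the unordered pair $\{v_a,v_b\}$, and the two ways of going around the polygon correspond to $(a,b)$ and $(b,a+N)$. I would present the matrix argument as the main line, since it only needs the boundary rows of $1$s and $0$s from Definition~\ref{df:CC}.
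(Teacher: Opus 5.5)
The paper gives no proof of this statement. It is quoted from Conway--Coxeter \cite{CC1, CC2} and used as a black box. Your proposal is a correct, self-contained proof along the classical continuant lines, and it is a genuinely different route in that sense.

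Your reduction to $\zeta_{i,j}=\zeta_{N-2-i,\,j+i+1}$ is correct. The dictionary you left open is $\zeta_{i,j}=m_{j-1,\,j+i}$. It gives $\zeta_{0,j}=1$ and $\zeta_{1,j}=q_j$, matching Example~\ref{ex:CC-frieze}. Under it the target becomes $m_{a,b}=m_{b,a+N}$ with $a=j-1$, $b=j+i$.

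The downward induction identifying entries with continuants rests on the identity $m_{a,b}m_{a+1,b+1}-m_{a+1,b}m_{a,b+1}=1$, which follows by induction on $b$. Positivity of row $i-1$ is used only to divide. Taking $b=a+N-1$ and $m_{a+1,a+N-1}>0$ in that identity also yields the zero row $m_{a,a+N}=0$.

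Compared with the citation, your argument uses only the two boundary rows and never the triangulation.

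The one step that is not a derivation as written is ``from $-I$ one reads off $m_{b,a+N}=-m_{a+N,b+N}\cdot(-1)$''. Make it explicit. For $a<b$ set $P_{a,b}=M(q_{b-1})\cdots M(q_{a+1})$, so that
\[
P_{a,b}=\begin{pmatrix} m_{a,b} & -m_{a+1,b}\\ m_{a,b-1} & -m_{a+1,b-1}\end{pmatrix},\qquad \det P_{a,b}=1 .
\]
The boundary conditions $m_{a,a+N-1}=1$ and $m_{a,a+N}=0$ hold for every $a$, and they give $P_{a,a+N+1}=-I$ for every $a$. Alternatively, rotate $M(q_{N-1})\cdots M(q_0)=-I$ cyclically, which is allowed because $-I$ is central. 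Splitting the product at $b$ gives $P_{b-1,a+N+1}=-P_{a,b}^{-1}$. Comparing $(2,2)$ entries gives $-m_{b,a+N}=-m_{a,b}$, which is what you need.

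Your Broline--Crowe--Isaacs alternative is fine as a remark, but the one-sided matching count is not manifestly symmetric in $\{v_a,v_b\}$. That symmetry is itself a theorem there. Keep the matrix argument as the main line, as you intend.
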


The Euclidean symmetries of $F(q)$ that do not interchange the two boundary rows are governed by the symmetries of the underlying triangulation, as follows.

\begin{lemma}[cf.\ {\cite{CC1, CC2}}]
\label{lem:equivariance}
Let $\tri$ be a triangulation of a convex $N$-gon, $q = q_{\tri}$ its quiddity sequence, and $F(q)$ the associated Conway--Coxeter frieze.
\begin{enumerate}
\item A cyclic rotation $v_j \mapsto v_{j+1}$ of the $N$-gon shifts $q$ cyclically and induces the horizontal translation of $F(q)$ by one column.
\item The reflections of the $N$-gon are the maps $v_j \mapsto v_{c-j}$ with $c \in \{0, 1, \dots, N-1\}$, each reversing $q$ by replacing $q_j$ with $q_{c-j}$ and inducing the vertical reflection of $F(q)$ whose action on the quiddity row is again $j \mapsto c-j$.
\end{enumerate}
Consequently, $F(q)$ admits a vertical reflection (resp. a non-trivial horizontal translation by fewer than $N$ columns as a symmetry) if and only if $\tri$ is fixed by the corresponding reflection (resp. rotation) of the $N$-gon.
\end{lemma}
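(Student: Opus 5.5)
We would prove both parts by making the bijection of Theorem~\ref{thm:CC} explicitly equivariant. For a triangulation $\tri$ with vertices $v_0,\dots,v_{N-1}$, the entries of $F(q_{\tri})$ have the standard combinatorial description of Conway--Coxeter: the entry in row $i$ of column $j$ is determined by a pair of vertices, say $\zeta_{i,j} = m_{j-1,\,j+i}$. Here $m_{a,b}$ counts the admissible paths from $v_a$ to $v_b$, or equivalently is the continuant attached to the triangles between $v_a$ and $v_b$. For us the essential features are that $m_{a,b}$ depends only on $\tri$ and the ordered pair of vertices, is symmetric in $a,b$, and is natural with respect to dihedral relabellings: if $\sigma$ is a symmetry of the $N$-gon, then $m^{\sigma(\tri)}_{\sigma(a),\sigma(b)} = m^{\tri}_{a,b}$. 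We would first establish, or cite from \cite{CC1, CC2}, this description, checking it against row $1$ (where $m_{j-1,j+1} = q_j$) and against the unimodular rule via Ptolemy-type relations.

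Part (1) then becomes an index computation. Under the rotation $v_a\mapsto v_{a+1}$, the triangulation $\tri$ becomes a triangulation $\tri'$ with $m^{\tri'}_{a+1,b+1} = m^{\tri}_{a,b}$, so $\zeta'_{i,j+1} = \zeta_{i,j}$, which is the translation by one column. The quiddity sequence shifts accordingly, as claimed.

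For part (2), the reflection $v_a\mapsto v_{c-a}$ gives
\[
  \zeta'_{i,j} = m_{c-j+1,\,c-j-i} = m_{c-j-i,\,c-j+1} = \zeta_{i,\,c-j-i+1},
\]
using the symmetry of $m$. In the plane the point $P(i,j)$ has $x$-coordinate $j+i/2$, and it is sent to $P(i,c-j-i+1)$, whose $x$-coordinate is $c+1-j-i/2$. So the map is $x\mapsto c+1-x$ at fixed height, which is a vertical reflection. On the quiddity row, written via $q_j$ (indexed by the middle vertex), it acts as $j\mapsto c-j$. The offset by one coming from the column convention must be tracked carefully, and this bookkeeping is the step most prone to error.

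For the ``consequently'' statement, a symmetry of $F(q)$ is determined by its effect on row $1$. This holds because row $1$ generates the frieze by the unimodular rule, as in Definition~\ref{df:CC}. The argument then runs in both directions:
\begin{itemize}
\item If $\tri$ is fixed by a rotation or reflection, parts (1)--(2) give the corresponding symmetry of $F(q)$.
\item Conversely, suppose $F(q)$ admits a vertical reflection or a horizontal translation by $k<N$ columns. Then it must map row $1$ to itself (up to the appropriate reversal), because such isometries preserve rows. The same holds after discarding the boundary rows of $1$s, and for $N=4$ all interior rows are handled similarly. Hence $q$ is invariant under the corresponding cyclic shift or reversal. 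Moreover, translations act on columns by integers, and reflections preserve the row lattice, so they correspond exactly to the maps $x\mapsto c+1-x$ with $c$ an integer.
\item Now let $\sigma$ be the corresponding symmetry of the $N$-gon. Then $\sigma(\tri)$ and $\tri$ have the same quiddity sequence. By the injectivity in Theorem~\ref{thm:CC}, a labelled triangulation is determined by its quiddity sequence, so $\sigma(\tri)=\tri$.
\end{itemize}

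The main obstacle is the naturality claim for $m_{a,b}$, that is, making the Conway--Coxeter entry formula precise in the paper's indexing. If we prefer to avoid it, we can instead argue entirely through row $1$. The dihedral action on $\tri$ transforms $q$ as stated. The unimodular rule is invariant under horizontal translation and under vertical mirror images, since the diamond relation $ad-bc=1$ is symmetric under swapping $a$ and $d$. So the transformed row $1$ generates the transformed frieze, and this yields (1) and (2) without the path formula.
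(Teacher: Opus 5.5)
Your proof is correct. Your fallback is exactly the paper's proof, and your primary route to parts (1) and (2) is different.

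\textbf{Your primary route.} You identify $\zeta_{i,j}=m_{j-1,j+i}$ with a quantity attached to the diagonal $v_{j-1}v_{j+i}$, natural under dihedral relabelling, and read off the translation and the reflection entry by entry. Your index bookkeeping checks out:
\begin{itemize}
\item row $1$ gives $\zeta_{1,j}=q_j$ at $P(1,j)$;
\item the diamond relation $\zeta_{i,j}\zeta_{i,j+1}-\zeta_{i-1,j+1}\zeta_{i+1,j}=1$ becomes the Ptolemy relation for $v_{j-1}v_jv_{j+i}v_{j+i+1}$;
\item the induced map $x\mapsto c+1-x$ restricts to $j\mapsto c-j$ on the quiddity row.
\end{itemize}

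\textbf{The paper's route.} The paper argues as in your closing fallback. $F(q)$ is generated from row $1$ by the unimodular rule, which is invariant under horizontal translation and under $a\leftrightarrow d$. Hence shifting or reversing $q$ translates or reflects the whole array.

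\textbf{The ``consequently'' part.} Your argument agrees with the paper's. A translation or vertical reflection preserves rows, so it acts on the quiddity row by a dihedral permutation $\pi$. The quiddity of $\pi\tri$ is $\pi\cdot q$, and injectivity in Theorem~\ref{thm:CC} forces $\pi\tri=\tri$.

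\textbf{What each buys.} The paper's route is shorter and uses nothing beyond Definition~\ref{df:CC} and Theorem~\ref{thm:CC}. Yours gives an explicit formula for every entry and makes the equivariance visible row by row. The cost is importing the Conway--Coxeter (Broline--Crowe--Isaacs) entry formula. Moreover, the symmetry $m_{a,b}=m_{b,a}$ you invoke in (2) is, in the paper's indexing, precisely the glide identity $\zeta_{i,j}=\zeta_{N-2-i,\,j+i+1}$ of Theorem~\ref{thm:glide}. So it is a genuine input rather than a free fact. Your sentence about discarding the boundary rows and the case $N=4$ is not needed.
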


\begin{proof}
Since $F(q)$ is uniquely determined by its first row $q$ through the $a\leftrightarrow d$-invariant rule $ad-bc=1$, reversing $q$ reflects $F(q)$ in a vertical line and shifting $q$ cyclically translates it, giving (1) and (2).
A vertical reflection or horizontal translation of $F(q)$ accordingly acts on the quiddity row by a dihedral permutation $\pi$ of the vertices, and since $q_{\pi \tri} = \pi \cdot q_{\tri}$ and $q$ determines $\tri$ (Theorem~\ref{thm:CC}), it is a symmetry exactly when $\pi \tri = \tri$.
\end{proof}

\subsection{Frieze patterns of types \texorpdfstring{$B_n$, $C_n$, $F_4$ and $G_2$}{B\_n, C\_n, F\_4 and G\_2}}
\label{ssec:BCFG}

A frieze pattern of type $B_n$ or $C_n$ with $n \ge 2$, or of type $F_4$ or $G_2$, is the array of Definition~\ref{df:FP} with the data of Example~\ref{ex:types}~\cite{ARS, FoP, Zha}, its $n$ interior rows $x_1, \dots, x_n$ lying between the two boundary rows of $1$s as in~\cite[(2.1)]{Zha}, where $n = 4$ for $F_4$ and $n = 2$ for $G_2$. 
Being horizontally periodic (Remark~\ref{rmk:FP-provenance}), it has a frieze group of value-preserving Euclidean symmetries (Theorem~\ref{thm:magic}), and we embed it, together with its equator, as in Definition~\ref{df:basic-domain}.

A \emph{sub-diamond} of the embedded frieze is the set of four entries on which a single local unimodular relation is imposed, either the classical relation $ad - bc = 1$ or one of the modified relations below.
Every sub-diamond of a frieze of type $A_n$ is classical, and we record the relations of the other types one by one.

\subsubsection{Type \texorpdfstring{$B_n$}{B\_n}}
\label{sssec:Bn}

The symmetrizer is $(1, \dots, 1, 2)$, and the unimodular relations of Definition~\ref{df:FP} read
\[
  x_i x'_i = x'_{i-1} x_{i+1} + 1 \quad (1 \le i \le n-1),
  \qquad
  x_n x'_n = (x'_{n-1})^{2} + 1,
\]
with the convention $x'_0 = 1$.
The row-$x_n$ relation is the sub-diamond $ad - b^2 c = 1$ with apex squared, all others classical~\cite{Zha}.

\subsubsection{Type \texorpdfstring{$C_n$}{C\_n} ($n \ge 2$)}
\label{sssec:Cn}

The symmetrizer is $(2, \dots, 2, 1)$, and the unimodular relations of Definition~\ref{df:FP} read
\[
  x_i x'_i = x'_{i-1} x_{i+1} + 1 \quad (1 \le i \le n-2),
\]
\[
  x_{n-1} x'_{n-1} = x'_{n-2} x_n^{2} + 1,
  \qquad
  x_n x'_n = x'_{n-1} + 1,
\]
with the convention $x'_0 = 1$.
The row-$x_{n-1}$ relation is the sub-diamond $ad - bc^2 = 1$ with base squared, all others classical~\cite{Zha}.

\subsubsection{Types \texorpdfstring{$F_4$ and $G_2$}{F\_4 and G\_2}}
\label{sssec:F4G2}

For type $F_4$ the symmetrizer is $(1, 1, 2, 2)$, and the unimodular relations of Definition~\ref{df:FP} read
\[
  x_1 x'_1 = x_2 + 1, \qquad
  x_2 x'_2 = x'_1 x_3 + 1, \qquad
  x_3 x'_3 = (x'_2)^{2} x_4 + 1, \qquad
  x_4 x'_4 = x'_3 + 1.
\]
The row-$x_3$ relation is the sub-diamond $ad - b^{2} c = 1$ with apex squared, all others classical.
For type $G_2$ the symmetrizer is $(3, 1)$, and the relations read
\[
  x_1 x'_1 = x_2^{3} + 1, \qquad x_2 x'_2 = x'_1 + 1.
\]
The row-$x_1$ relation is the sub-diamond $ad - b c^{3} = 1$ with base cubed and apex $b = 1$, the other classical.

\subsection{Frieze patterns of type \texorpdfstring{$D_n$}{D\_n}}
\label{ssec:Dn}

A frieze pattern $F = (x_{i,j})$ of type $D_n$, with $n \ge 4$, is understood in the sense of Definition~\ref{df:FP}, subject to the following conventions, which we fix once and for all.
The rows carry the standard \textup{(}Bourbaki\textup{)} labelling of the Dynkin diagram, with trivalent node $n-2$ and legs $n-1$ and $n$.
The array of~\cite[(2.2)]{Zha} is the same one up to the relabelling $i \mapsto n+1-i$ of the rows, which places the legs on the rows $1$ and $2$.
The symmetrizer is $(1, \dots, 1)$, the generalized Cartan matrix is
\[
  A = \begin{pmatrix}
    2 & -1 & & & & \\
    -1 & 2 & \ddots & & & \\
    & \ddots & \ddots & -1 & & \\
    & & -1 & 2 & -1 & -1 \\
    & & & -1 & 2 & 0 \\
    & & & -1 & 0 & 2
  \end{pmatrix},
\]
and the unimodular relations of Definition~\ref{df:FP} read
\begin{equation}\label{eq:Dn-relations}
\begin{aligned}
& x_{1,j}\,x_{1,j+1} = 1 + x_{2,j}, \\
& x_{i,j}\,x_{i,j+1} = 1 + x_{i-1,j+1}\,x_{i+1,j} \quad (2 \le i \le n-3), \\
& x_{n-2,j}\,x_{n-2,j+1} = 1 + x_{n-3,j+1}\,x_{n-1,j}\,x_{n,j}, \\
& x_{n-1,j}\,x_{n-1,j+1} = 1 + x_{n-2,j+1}, \quad
  x_{n,j}\,x_{n,j+1} = 1 + x_{n-2,j+1},
\end{aligned}
\end{equation}
the second family being empty for $n = 4$.
The entries lie on the $n+1$ horizontal lines $y = 0, -1, \dots, -n$, the extreme lines being the bordering rows of $1$s.
We embed $F$ in the plane by placing $x_{i,j}$ at the point
\[
  Q(i,j) =
  \begin{cases}
    P(i,j) & (1 \le i \le n-2), \\
    \bigl(j + \frac{n-1}{2},\ -(n-2)\bigr) & (i = n-1), \\
    \bigl(j + \frac{n-1}{2},\ -(n-1)\bigr) & (i = n).
  \end{cases}
\]
Every line carries one entry per unit of horizontal period, except the line $y=-(n-2)$, which interleaves the two rows $x_{n-2}$ and $x_{n-1}$ and carries two.
We call $y=-(n-2)$ \emph{the bifurcation line}.
The equator is defined as in Definition~\ref{df:basic-domain}.

\begin{example}
\label{ex:D5-embed}
The frieze of type $D_5$ with all-ones first diagonal has period $5$, its diagonals $(x_1, \dots, x_5)$ being $(1,1,1,1,1)$, $(2,3,4,5,5)$, $(2,3,19,4,4)$, $(2,13,11,3,3)$, $(7,6,5,2,2)$. Embedded as above, we obtain the following picture.
\begin{center}
\begin{tikzpicture}[scale=1.1, font=\small,
  num/.style={fill=white, inner sep=1.2pt},
  numt/.style={fill=white, inner sep=1.2pt, text=blue!70!black}]
  \fill[orange!12] (-0.6,-2.75) rectangle (7.9,-3.25);
  \foreach \y in {0,-1,-2,-4,-5}
    \draw[gray!45, {Latex[length=3.5pt]}-{Latex[length=3.5pt]}] (-0.6,\y)--(7.9,\y);
  \draw[orange!65!black, {Latex[length=3.5pt]}-{Latex[length=3.5pt]}] (-0.6,-3)--(7.9,-3);
  \node[orange!60!black, anchor=west, font=\footnotesize] at (6.6,-2.72) {bifurcation line};
  \node[gray!75, anchor=east, font=\footnotesize] at (-0.75,0) {$1$};
  \node[gray!75, anchor=east, font=\footnotesize] at (-0.75,-1) {$x_1$};
  \node[gray!75, anchor=east, font=\footnotesize] at (-0.75,-2) {$x_2$};
  \node[anchor=east, font=\footnotesize] at (-0.75,-3) {$x_3\,/\,\textcolor{blue!70!black}{x_4}$};
  \node[gray!75, anchor=east, font=\footnotesize] at (-0.75,-4) {$x_5$};
  \node[gray!75, anchor=east, font=\footnotesize] at (-0.75,-5) {$1$};
  \node[gray!70, font=\footnotesize] at (-0.15,0.5){$j\to-\infty$};
  \node[gray!70, font=\footnotesize] at (7.45,0.5){$j\to+\infty$};
  \foreach \x in {0,1,2,3,4,5} \node[num] at (\x,0) {$1$};
  \foreach \x/\v in {0.5/1,1.5/2,2.5/2,3.5/2,4.5/7} \node[num] at (\x,-1) {$\v$};
  \foreach \x/\v in {1/1,2/3,3/3,4/13,5/6} \node[num] at (\x,-2) {$\v$};
  \foreach \x/\v in {1.5/1,2.5/4,3.5/19,4.5/11,5.5/5} \node[num] at (\x,-3) {$\v$};
  \foreach \x/\v in {2/1,3/5,4/4,5/3,6/2} \node[numt] at (\x,-3) {$\v$};
  \foreach \x/\v in {2/1,3/5,4/4,5/3,6/2} \node[num] at (\x,-4) {$\v$};
  \foreach \x in {2.5,3.5,4.5,5.5,6.5,7.5} \node[num] at (\x,-5) {$1$};
\end{tikzpicture}
\end{center}
\end{example}

\subsection{Conway notation for frieze groups}
\label{ssec:Conway}

A \emph{frieze group} is a discrete group of isometries of the Euclidean plane whose translation subgroup is infinite cyclic. We recall Conway's orbifold notation~\cite{C} for the symmetries of a planar pattern, in the form needed for such groups.

\begin{definition}[{\cite[pp.~438--439]{C}}]
\label{df:Conway-features}
Let $G$ be a discrete group of isometries of the Euclidean plane.
\begin{enumerate}
\item A \emph{mirror} of $G$ is a line fixed by some reflection in $G$.
The image in $\mathbb{R}^2/G$ of a point lying on exactly $m$ mirrors is a \emph{corner-point} of order $m$.

\item A \emph{gyration} in $G$ is a rotation whose centre does \emph{not} lie on any mirror of $G$.
A point of the plane is an \emph{$m$-fold gyration point} if it is the centre of some gyration of order $m$, but not of any gyration of higher order.
Its image in $\mathbb{R}^2/G$ is a \emph{cone-point} of order $m$.

\item A \emph{glide reflection} in $G$ is a reflection composed with a non-trivial translation along its axis.
\end{enumerate}
\end{definition}

\begin{definition}[{\cite[p.~440]{C}}]
\label{df:Conway-symbol}
\emph{The Conway symbol} of a discrete group $G$ of isometries of $\mathbb{R}^2$,
equivalently of its orbifold $\mathbb{R}^2/G$, is a string of the form
\[
  \underbrace{\mathrm{o\,o\cdots o}}_{\text{handles}}\ \,
  \underbrace{A\, B\, \cdots\, C}_{\text{cone-point orders}}\ \,
  \underbrace{*a_1 b_1 \cdots c_1}_{\text{boundary curve 1}}\ \,
  \underbrace{*a_2 b_2 \cdots c_2}_{\text{boundary curve 2}}\ \,
  \cdots\ \,
  \underbrace{\times\times\cdots\times}_{\text{crosscaps}},
\]
read from left to right, with the following meaning.
\begin{itemize}
\item Each leading $\mathrm{o}$ represents a \emph{handle} of the orbifold.
\item The digits $A, B, \dots, C$ before any $*$ are the orders of the distinct cone-points.
\item Each $*$ introduces a \emph{boundary curve}, the digits $a_i, b_i, \dots, c_i$ following that $*$ being the orders of the corner-points along the curve, in consistent cyclic order.
\item Each trailing $\times$ represents a \emph{crosscap}.
\end{itemize}
In particular, a digit $m$ standing alone (not preceded by $*$) indicates an $m$-fold gyration point, whereas a substring $*m$ within a boundary curve indicates a corner-point of order $m$.
The symbol $\infty$ arises from features of infinite order coming from the translation subgroup.
\end{definition}

\begin{theorem}[{\cite[p.~446]{C}}]
\label{thm:magic}
There are exactly seven frieze groups, distinguished by the following Conway symbols.
\[
  {*}22\infty,\quad
  2{*}\infty,\quad
  22\infty,\quad
  {*}\infty\infty,\quad
  \infty{*},\quad
  \infty{\times},\quad
  \infty\infty.
\]
\end{theorem}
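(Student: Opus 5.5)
The statement to prove is Conway's classification (Theorem~\ref{thm:magic}): there are exactly seven frieze groups, with the listed symbols. The plan is to classify by the point group first and then name each case.

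Let $G$ be a frieze group with translation subgroup $\langle \tau \rangle$, $\tau(x,y) = (x+t, y)$. First, every element of $G$ conjugates $\tau$ to $\tau^{\pm1}$. Hence the linear part of each element preserves the direction of translation. Its linear part therefore lies in the Klein four-group $\{1, r_h, r_v, -1\}$, where $r_h$ is the reflection in the horizontal axis, $r_v$ the reflection in a vertical line, and $-1$ the half-turn. Second, there is a $G$-invariant horizontal line $\ell$. If $G$ contains an element with linear part $r_h$ or $-1$, that element maps some horizontal line to its mirror image; a suitable averaging, or the fixed line of that element, gives the common axis. Otherwise every horizontal line is invariant. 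So the point group $\bar G = G/\langle\tau\rangle$ embeds in the Klein group.

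We then enumerate the five subgroups of the Klein group.
\begin{enumerate}
\item $\bar G$ trivial gives $\infty\infty$.
\item $\bar G = \langle r_v\rangle$ gives vertical mirrors. Two conjugacy classes of mirrors occur per period, which is ${*}\infty\infty$.
\item $\bar G = \langle -1\rangle$ gives two classes of half-turn centres on $\ell$, which is $22\infty$.
\item $\bar G = \langle r_h\rangle$: the element is either the reflection in $\ell$, giving $\infty{*}$, or a glide along $\ell$ by $t/2$, giving $\infty{\times}$. Conjugating by translations and comparing squares excludes any other translation part.
\item $\bar G$ the full Klein group: the subgroup of elements with linear part in $\langle r_v\rangle$ already has mirrors. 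The $r_h$-element is either the reflection in $\ell$, which produces corners at the intersections and hence ${*}22\infty$, or a glide. In the glide case the half-turns lie off the vertical mirrors, which gives $2{*}\infty$.
\end{enumerate}

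For each case we compute the orbifold $\mathbb{R}^2/G$ directly from a fundamental strip, reading off cone points, corners, boundaries and crosscaps according to Definition~\ref{df:Conway-symbol}. This step confirms the symbols above. It also shows that the seven groups are pairwise non-isomorphic, or at least not conjugate under affine maps, since their orbifolds carry different combinatorial data.

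The main obstacle is the translation-part analysis in the Klein-group case. One must show that, after conjugating by a translation, only the two listed configurations remain. In particular, a glide $r_h$ combined with reflections $r_v$ forces the half-turn centres to lie midway between mirrors. The approach is to normalize the mirror of $r_v$ to $x=0$ and the element with linear part $r_h$ to $(x,y)\mapsto(x+s,-y)$, with $s \in \{0, t/2\}$. The product of these two is then the half-turn about $(s/2, 0)$, which lies on a mirror exactly when $s = 0$.
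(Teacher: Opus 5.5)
Your argument is correct, but it takes a different route from the paper. The paper gives no proof of this statement: it quotes it from Conway, where the list comes out of the orbifold calculus (the Magic Theorem, and the analogy with the infinite families of spherical groups with $N$ replaced by $\infty$).

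Your route is elementary. You pin down the point group $G/\langle\tau\rangle$ as a subgroup of $\{1,r_h,r_v,-1\}$. You then normalise translation parts: the square of an $r_h$-element forces $2s\in t\mathbb{Z}$, and multiplying by a power of $\tau$ reduces $s$ to $0$ or $t/2$. Only at the end do you compute $\mathbb{R}^2/G$ to attach symbols, and the count $1+1+1+2+2$ gives exactly seven.

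The two routes buy different things:
\begin{itemize}
\item The orbifold route produces the symbols directly and uniformly with the wallpaper and spherical groups. It presupposes the orbifold machinery and still needs a realisability check for each admissible symbol.
\item Your route is self-contained and gives explicit normal forms for generators. That is what actually justifies the generator table of Remark~\ref{rmk:7friezes} and the ``reading off'' in Lemmas~\ref{lem:identification} and~\ref{lem:boundary-exch-list}. For example, a group containing a horizontal glide has $r_h$ in its point group, so it lies in your case (4) or (5).
\end{itemize}

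Two small repairs are needed.

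First, make the invariant-line step explicit instead of appealing to ``averaging''. An element $(x,y)\mapsto(-x+a,\,y+b)$ squares to the vertical translation by $2b$, which must be trivial. Hence elements with linear part $1$ or $r_v$ preserve every horizontal line. It follows that the product of two elements with linear parts in $\{r_h,-1\}$, sending $y\mapsto -y+b_1$ and $y\mapsto -y+b_2$, has zero vertical displacement, so $b_1=b_2$. All such elements therefore share the axis $y=b/2$.

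Second, drop ``pairwise non-isomorphic''. As abstract groups, $\infty\infty\cong\infty{\times}\cong\mathbb{Z}$ and ${*}\infty\infty\cong 22\infty\cong 2{*}\infty\cong D_\infty$. For $2{*}\infty$, one checks $\sigma g\sigma=g^{-1}$ for the glide $g$ and a mirror $\sigma$. The seven groups are distinct only up to affine conjugacy. That is exactly what the orbifold data detects, as do the cruder invariants: presence of horizontal mirrors, glides and half-turns, and whether half-turn centres lie on mirrors.
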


\begin{remark}
\label{rmk:7friezes}
Each Conway symbol admits a concrete geometric description in terms of the local features generating the group, together with Conway's descriptive name.
\begin{center}
\renewcommand{\arraystretch}{1.15}
\begin{tabular}{llp{8.5cm}}
\hline
Symbol & Name & Generators \\
\hline
$\infty\infty$       & hop            & translation only \\
$\infty\times$       & step           & glide reflection (squares to translation) \\
$\infty *$           & jump           & translation, equator mirror \\
$* \infty\infty$     & sidle          & two vertical mirrors \\
$22\infty$           & spinning hop   & two $2$-fold gyrations on the equator \\
$*22\infty$          & spinning jump  & vertical mirrors, equator mirror, $2$-fold corners \\
$2*\infty$           & spinning sidle & vertical mirrors, $2$-fold gyration on the equator \\
\hline
\end{tabular}
\end{center}

The seven types are rendered below, each as a strip of an asymmetric motif carrying the symmetry elements that generate its group.

\begin{center}
\begin{tikzpicture}[font=\small,
  mot/.style={font=\sffamily\bfseries, inner sep=0pt, scale=0.72},
  eq/.style={gray!50},
  mir/.style={thick},
  gli/.style={thick, dash pattern=on 5pt off 3pt},
  twofold/.style={diamond, fill=black, inner sep=0pt, minimum width=5pt, minimum height=8pt},
  ]
\def\u{0.30}\def\l{-0.30}
\def\R#1#2#3{\node[mot,#3] at (#1,#2) {R};}
\def\rt#1{\node[twofold] at (#1,0) {};}
\def\vmir#1{\draw[mir] (#1,-0.48)--(#1,0.48);}
\begin{scope}[yshift=0cm]
  \node at (-2.5,0.14) {$\infty\infty$};
  \draw[eq] (0.2,0)--(6.5,0);
  \foreach \x in {1,2,3,4,5,6} {\R{\x}{\u}{}}
\end{scope}
\begin{scope}[yshift=-1.1cm]
  \node at (-2.5,0.14) {$\infty\times$};
  \draw[gli] (0.2,0)--(6.5,0);
  \foreach \x in {1,3,5} {\R{\x}{\u}{}}
  \foreach \x in {2,4,6} {\R{\x}{\l}{yscale=-1}}
\end{scope}
\begin{scope}[yshift=-2.2cm]
  \node at (-2.5,0.14) {$\infty*$};
  \draw[mir] (0.2,0)--(6.5,0);
  \foreach \x in {1,2,3,4,5,6} {\R{\x}{\u}{}\R{\x}{\l}{yscale=-1}}
\end{scope}
\begin{scope}[yshift=-3.3cm]
  \node at (-2.5,0.14) {$*\infty\infty$};
  \draw[eq] (0.2,0)--(6.5,0);
  \foreach \x in {1,2,3,4,5} {\vmir{\x}}
  \foreach \x in {0.5,2.5,4.5} {\R{\x}{\u}{}}
  \foreach \x in {1.5,3.5,5.5} {\R{\x}{\u}{xscale=-1}}
\end{scope}
\begin{scope}[yshift=-4.4cm]
  \node at (-2.5,0.14) {$22\infty$};
  \draw[eq] (0.2,0)--(6.5,0);
  \foreach \x in {1,2,3,4,5} {\rt{\x}}
  \foreach \x in {0.5,2.5,4.5} {\R{\x}{\u}{}}
  \foreach \x in {1.5,3.5,5.5} {\R{\x}{\l}{rotate=180}}
\end{scope}
\begin{scope}[yshift=-6.6cm]
  \node at (-2.5,0.14) {$2*\infty$};
  \draw[gli] (0.2,0)--(6.5,0);
  \foreach \x in {1,2,3,4,5} {\vmir{\x}}
  \foreach \x in {0.5,1.5,2.5,3.5,4.5,5.5} {\rt{\x}}
  \foreach \x in {0.5,2.5,4.5} {\R{\x}{\u}{}\R{\x}{\l}{rotate=180}}
  \foreach \x in {1.5,3.5,5.5} {\R{\x}{\u}{xscale=-1}\R{\x}{\l}{yscale=-1}}
\end{scope}
\begin{scope}[yshift=-5.5cm]
  \node at (-2.5,0.14) {$*22\infty$};
  \draw[mir] (0.2,0)--(6.5,0);
  \foreach \x in {1,2,3,4,5} {\vmir{\x}}
  \foreach \x in {1,2,3,4,5} {\rt{\x}}
  \foreach \x in {0.5,2.5,4.5} {\R{\x}{\u}{}\R{\x}{\l}{yscale=-1}}
  \foreach \x in {1.5,3.5,5.5} {\R{\x}{\u}{xscale=-1}\R{\x}{\l}{rotate=180}}
\end{scope}
\node[anchor=west,font=\scriptsize] at (-2.6,-7.7) {solid: mirror line\quad dashed: glide axis\quad $\blacklozenge$: $2$-fold centre};
\end{tikzpicture}
\end{center}
\end{remark}

The following identification criterion, a direct reading of the generator description in Remark~\ref{rmk:7friezes}, is what we use to pin down the Conway symbol of a given frieze.

\begin{lemma}
\label{lem:identification}
Let $G$ be a frieze group containing the horizontal glide reflection of \textup{Theorem~\ref{thm:glide}}. Then $G$ is one of the four types $\infty{\times}$, $\infty{*}$, $2{*}\infty$, ${*}22\infty$. Moreover, we have the following.
\begin{itemize}
\item[(a)] $G$ contains a vertical reflection if and only if $G$ is of type $2{*}\infty$ or ${*}22\infty$, and
\item[(b)] $G$ contains an equator reflection if and only if $G$ is of type $\infty{*}$ or ${*}22\infty$.
\end{itemize}
\end{lemma}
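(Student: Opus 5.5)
The plan is to argue directly from the seven-type classification of Theorem~\ref{thm:magic} and the generator descriptions in Remark~\ref{rmk:7friezes}, by sorting the seven frieze groups according to which kinds of isometries they contain. Every frieze group preserves the horizontal direction of its translations. Hence each element is a translation, a half-turn, a reflection in a horizontal or a vertical line, or a glide reflection along a horizontal axis. For each of the seven types we list which of these kinds actually occur.

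\begin{itemize}
\item $\infty\infty$ consists of translations only.
\item $22\infty$ consists of translations and half-turns.
\item ${*}\infty\infty$ consists of translations and vertical reflections.
\item $\infty{\times}$ consists of translations and horizontal glide reflections.
\item $\infty{*}$ consists of translations, the reflection in the horizontal mirror, and its composites with translations. These composites are horizontal glide reflections.
\item $2{*}\infty$ contains vertical reflections and half-turns whose centres lie on the axis and off the mirrors. Its composites include horizontal glide reflections, but it contains no horizontal reflection.
\item ${*}22\infty$ contains all four kinds.
\end{itemize}

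Using this list, the first claim follows because the three types $\infty\infty$, $22\infty$ and ${*}\infty\infty$ contain no element reversing orientation along a horizontal axis. Indeed, a horizontal glide reflection followed by a vertical reflection gives a half-turn. Conversely, composing two half-turns gives a translation, and composing a half-turn with a translation gives a half-turn, so none of these combinations produces a horizontal glide. A direct inspection of the generators confirms this: $22\infty$ is generated by half-turns, and ${*}\infty\infty$ by vertical mirrors, so each group contains only the kinds of elements listed above. Hence a group containing the glide reflection $g$ of Theorem~\ref{thm:glide} must be one of $\infty{\times}$, $\infty{*}$, $2{*}\infty$ or ${*}22\infty$.

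For (a), we read off from the list that among these four types, exactly $2{*}\infty$ and ${*}22\infty$ have vertical mirrors. The types $\infty{\times}$ and $\infty{*}$ contain only translations and horizontal isometries. For (b), a reflection in a horizontal line fixes that line, which must be the unique invariant horizontal line, namely the axis of $g$. By Theorem~\ref{thm:glide} this axis is the equator. Among the four types, only $\infty{*}$ and ${*}22\infty$ contain a horizontal reflection. In $2{*}\infty$ the horizontal elements are pure glides, since the half-turn centres lie off the mirrors. In $\infty{\times}$ the square of the glide is the minimal translation, so no reflection arises.

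The main obstacle is to justify the element lists rigorously rather than by reading pictures. The step needing care is showing that $2{*}\infty$ contains no horizontal reflection. I would handle it with a concrete normal form. Take $G$ generated by the reflection $\sigma$ in $x=0$ and the half-turn $\rho$ about $(1/2,0)$. Then $\rho\sigma$ maps $(x,y)\mapsto(1+x,-y)$, a glide by $1$, and its square is the translation by $2$. Any element of $G$ can be written as $t^k$, $t^k\sigma$, $t^k\rho$ or $t^k\rho\sigma$, where $t$ is that translation. Among these, the horizontal-reversing ones are $t^k\rho\sigma$, which are glides by an odd amount and hence never reflections.
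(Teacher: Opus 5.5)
Your proposal is correct and follows the same route as the paper: you discard $\infty\infty$, ${*}\infty\infty$ and $22\infty$ because they contain no horizontal glide, and then read off the vertical and equatorial mirrors of the remaining four types from the generator table of Remark~\ref{rmk:7friezes}. Your explicit element lists, the observation that any horizontal mirror must coincide with the axis of $g$ (the equator), and the normal-form check for $2{*}\infty$ spell out what the paper's two-line proof leaves to inspection of that table.
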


\begin{proof}
Among the seven Conway symbols of \textup{Theorem~\ref{thm:magic}}, the types $\infty\infty$, ${*}\infty\infty$ and $22\infty$ contain no horizontal glide, whence $G$ is one of the remaining four $\infty{\times}$, $\infty{*}$, $2{*}\infty$, ${*}22\infty$. Reading off the vertical and equatorial reflections of these four from the same table yields (a) and (b).
\end{proof}

\subsection{Boundary-exchange symmetry}
\label{ssec:boundary-exchange}

The classification of types $B_n$, $C_n$, $D_n$, $F_4$ and $G_2$ turns on whether a frieze admits a symmetry interchanging its two boundary rows, and we isolate the notion and its consequences here.

\begin{definition}
\label{df:boundary-exchange}
A frieze pattern $F$ is said to admit \emph{the boundary-exchange symmetry} if the Euclidean symmetry group of $F$ contains an isometry that interchanges its top and bottom boundary rows.
\end{definition}

A boundary-exchange reverses the vertical direction of the band.

\begin{lemma}
\label{lem:boundary-mech}
A boundary-exchange of a frieze of type $A_n$, $B_n$, $C_n$, $F_4$ or $G_2$, with interior rows $x_1, \dots, x_n$, carries the row $x_i$ to the row $x_{n+1-i}$, and each sub-diamond to a sub-diamond on the image rows, exchanging apex and base and preserving the unordered pair of side entries.
\end{lemma}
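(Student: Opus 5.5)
The argument uses only the geometry of the embedding in Definition~\ref{df:basic-domain} and the fact that a symmetry preserves values, hence also the rows of $1$s and the set of entry positions.

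\emph{The isometry preserves the set of horizontal lines carrying entries.} Let $\sigma$ be a boundary-exchange. The embedded array lies in the closed band between the two boundary lines $y=0$ and $y=-(n+1)$. Since $\sigma$ interchanges these lines, it preserves the band and maps horizontal lines to horizontal lines. Within the band, $\sigma$ therefore acts on the $y$-coordinate as the reflection $y \mapsto -(n+1)-y$ about the equator. The isometries of the plane preserving a horizontal band with its two boundaries swapped are the reflection in the equator composed with a horizontal translation, and the half-turns about points of the equator. In every case $\sigma$ sends the line $y=-i$ to $y=-(n+1-i)$.

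\emph{Rows go to rows.} In types $A_n$, $B_n$, $C_n$, $F_4$ and $G_2$, each line $y=-i$ with $1\le i\le n$ carries exactly one row, namely $x_i$. Since $\sigma$ maps entries to entries of equal value, it carries row $x_i$ onto row $x_{n+1-i}$. This settles the first claim.

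\emph{Sub-diamonds go to sub-diamonds.} A sub-diamond on rows $x_{i-1},x_i,x_{i+1}$ consists of four points:
\begin{itemize}
\item two side entries $x_{i,j}$ and $x_{i,j+1}$ at horizontal distance $1$ on the line $y=-i$;
\item an apex at $P(i-1,j+1)$, one line above, horizontally midway between the sides;
\item a base at $P(i+1,j)$, one line below, also midway.
\end{itemize}
This follows from $P(i,j)=(j+i/2,-i)$, and the relation involving the apex and base is exactly the one recorded for the row in question.

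An isometry of the kind described above maps this configuration to a congruent one. The sides go to two adjacent entries of row $x_{n+1-i}$. The apex goes to the midway point on the line below the image row, and the base goes to the midway point on the line above, because $\sigma$ reverses the vertical direction. These four points are exactly a sub-diamond centred on row $x_{n+1-i}$, with the roles of apex and base exchanged. The unordered pair of side entries is preserved: a glide or reflection fixes their order, a half-turn swaps them, and in either case the pair as a set is the same.

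\emph{Boundary entries.} When $i=1$ or $i=n$, the apex or base lies on a boundary line of $1$s. Because $\sigma$ swaps the two boundary lines, such an entry goes to the opposite boundary line, consistent with the above.

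\emph{Main obstacle.} The only delicate point is justifying that $\sigma$ maps horizontal lines to horizontal lines, and not, say, by some rotation through an angle other than $\pi$. A non-horizontal image of a boundary line would leave the band, so it could not be a boundary row of the frieze. This forces $\sigma$ to preserve the horizontal direction, and the rest of the argument follows.
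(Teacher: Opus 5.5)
Your proof is correct and follows the same route as the paper: a boundary-exchange reverses the vertical direction of the band, so it sends the line carrying $x_i$ to the line carrying $x_{n+1-i}$, and by congruence it carries each sub-diamond to one with apex and base exchanged and the side pair preserved. The paper states this in one line. You supply the details it leaves implicit: the classification of band-swapping isometries (a glide or reflection in the equator, or a half-turn about a point of it), and the explicit coordinates showing that apex and base lie at the midpoint one line above and one line below.
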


\begin{proof}
A boundary-exchange is a value-preserving isometry reversing the vertical direction of the band, which on the interior rows reads $x_i \mapsto x_{n+1-i}$, and it therefore carries each sub-diamond to a sub-diamond with apex and base interchanged and with the side vertices carried to the side vertices.
\end{proof}

\begin{lemma}
\label{lem:boundary-exch-list}
Among the seven Conway symbols of \textup{Theorem~\ref{thm:magic}}, the frieze groups that admit boundary-exchange symmetry are exactly those whose Conway symbol belongs to
\[
  \{\, \infty{*}, \ \infty{\times}, \ 22\infty, \ 2{*}\infty, \
    {*}22\infty \,\}.
\]
\end{lemma}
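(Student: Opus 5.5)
The approach is to go through the seven groups of Theorem~\ref{thm:magic} one at a time, using the generator description of Remark~\ref{rmk:7friezes}, and in each case decide whether some element of the group reverses the vertical direction of the band. A frieze group preserves the band and its translation axis. Each of its elements therefore either preserves or reverses the vertical orientation. Those that reverse it are the reflection in the equator, horizontal glide reflections along the equator, and half-turns about points on the equator. Those that preserve it are translations and vertical reflections. The top and bottom boundary rows are the two extreme horizontal lines of the band. An isometry of the band either fixes each of these lines or swaps them, and it swaps them exactly when it reverses the vertical direction. So the group admits the boundary-exchange symmetry precisely when it contains an orientation-reversing element of the first kind, that is, an equator reflection, a horizontal glide, or a half-turn.

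With this in hand I will read off the table in Remark~\ref{rmk:7friezes}.
\begin{itemize}
\item The groups $\infty\infty$ and ${*}\infty\infty$ are generated by translations and vertical mirrors, so every element preserves the vertical direction and neither group is listed.
\item The group $\infty{*}$ contains the equator mirror.
\item The group $\infty{\times}$ contains a glide along the equator.
\item The group $22\infty$ contains half-turns about points on the equator.
\item The group $2{*}\infty$ contains $2$-fold gyrations on the equator.
\item The group ${*}22\infty$ contains the equator mirror, and also half-turns at its corners.
\end{itemize}
Each of the last five groups therefore contains a vertical-direction-reversing element. That element swaps the two boundary lines, and being a symmetry of the frieze it carries the boundary row onto the boundary row, which is exactly the boundary-exchange of Definition~\ref{df:boundary-exchange}. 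This gives precisely the set stated in the lemma.

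The main point needing care is to justify that the elements generating the group act on the band as claimed. In particular, the gyration centres and mirror lines named in Remark~\ref{rmk:7friezes} must lie on the equator, that is, the midline of the band, and not somewhere else. This holds because any symmetry reversing the vertical direction must map the band $[-(N-2),0]$ onto itself, and so fixes its midline setwise. Once that is granted, the rest is bookkeeping. Conversely, I must check that no hidden element of $\infty\infty$ or ${*}\infty\infty$ reverses the vertical direction. This follows because the generators preserve it, so every product of them does as well.
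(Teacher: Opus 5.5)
Your proof is correct and follows the same route as the paper. You identify the band isometries that reverse the vertical direction (the equator reflection, a glide along the equator, a half-turn about a point of the equator), and then read off from the generator table of Remark~\ref{rmk:7friezes} which of the seven groups contain one; you also spell out two points the paper leaves implicit, namely that such a symmetry must preserve the midline of the band, and that a group generated by direction-preserving elements contains no direction-reversing element.
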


\begin{proof}
By the above, a boundary-exchange is a reflection about the equator, a glide along it, or a $2$-fold rotation centred on the equator, these being the band isometries that reverse the vertical direction.
Consulting Remark~\ref{rmk:7friezes}, the listed symbols are exactly those with one of them among their generators.
\end{proof}

\section{Classification for types \texorpdfstring{$A_n$, $B_n$, $C_n$, $F_4$ and $G_2$}{A\_n, B\_n, C\_n, F\_4 and G\_2}}
\label{sec:main-ABCFG}

\subsection{Classification of friezes of type \texorpdfstring{$A_n$}{A\_n}}
\label{ssec:main-An}

We say that a triangulation $\tri$ of a convex polygon is \emph{bilateral} if it admits a line of symmetry, and \emph{centrally symmetric} if it is invariant under a $180^\circ$ rotation about the centre of the polygon.

By Lemma~\ref{lem:equivariance}, $\tri$ is bilateral if and only if $F(q)$ admits a vertical reflection.
The central symmetry is detected as follows.

\begin{proposition}
\label{prop:reflections}
Let $\tri$ be a triangulation of a convex $N$-gon, $q = q_{\tri}$ its quiddity sequence, and $F(q)$ the associated Conway--Coxeter frieze.
Then $\tri$ is centrally symmetric if and only if the equator of $F(q)$ is a reflection line.
\end{proposition}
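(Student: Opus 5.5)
The plan is to compose the equator reflection with the glide reflection of Theorem~\ref{thm:glide}, so that the question reduces to the translation statement of Lemma~\ref{lem:equivariance}(1). Write $r(x,y) = (x,\,-(N-2)-y)$ for the reflection about the equator and $g$ for the glide, so that $g = \tau_{N/2}\circ r$, where $\tau_s(x,y) = (x+s,y)$. Since $g$ is a symmetry of the embedded frieze, $r$ is a symmetry if and only if $r\circ g^{-1} = \tau_{-N/2}$ is one, equivalently if and only if the horizontal translation by $N/2$ preserves $F(q)$.

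I then split according to the parity of $N$. When $N$ is odd, translation by $N/2$ does not carry the lattice $\{P(i,j)\}$ to itself. The points of row $i$ have $x$-coordinates in $\frac{i}{2}+\Z$, and adding a half-integer changes the class modulo $\Z$, so the translated point no longer lies on a lattice position of the same row. Hence $\tau_{N/2}$ is not a symmetry, and neither is $r$. On the polygon side, an odd polygon has no centrally symmetric triangulation, since the half-turn does not map the vertex set to itself. So both sides of the equivalence fail, and the statement holds for odd $N$.

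When $N$ is even, $\tau_{N/2}$ is the horizontal translation by $N/2$ columns, and $0<N/2<N$. By Lemma~\ref{lem:equivariance}(1), iterated, this translation is induced by the rotation $v_j\mapsto v_{j+N/2}$ of the $N$-gon, which is exactly the half-turn about the centre. By the last sentence of Lemma~\ref{lem:equivariance}, which uses the bijection of Theorem~\ref{thm:CC}, $\tau_{N/2}$ preserves $F(q)$ if and only if $\tri$ is fixed by the half-turn, that is, if and only if $\tri$ is centrally symmetric. Combined with the first step, this proves the proposition.

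The main obstacle is the first step. One has to confirm that the equator reflection $r$, read in indices, sends lattice positions to lattice positions with the correct shift, namely $(i,j)\mapsto (N-2-i,\,j+i+1-N/2)$ when $N$ is even. Then the composition with $g$ really is the pure translation by $N/2$ columns, with no residual half-step offset. This is a short coordinate check using $P(i,j)$ and the index form $\zeta_{i,j}=\zeta_{N-2-i,\,j+i+1}$ of Theorem~\ref{thm:glide}, which I would carry out explicitly.
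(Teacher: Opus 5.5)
Your proof is correct and follows the paper's argument: factor the glide of Theorem~\ref{thm:glide} as the equator reflection followed by the translation by $N/2$, note that a half-integer shift cannot preserve the rows (forcing $N$ even), and use Lemma~\ref{lem:equivariance} to identify the translation by $N/2$ columns with the half-turn of the polygon. The step you flag as the main obstacle needs no coordinate check. The identity $r\circ g^{-1}=\tau_{-N/2}$ holds for plane isometries directly from the formula for $g$, and whether $r$ respects the lattice is exactly what the resulting equivalence decides.
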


\begin{proof}
Suppose first that $\tri$ is centrally symmetric.
Then its $N$-gon has an even number of vertices, $N = 2\ell$, and $q$ has period $\ell$ (not necessarily minimal).
Hence the horizontal translation $t$ by $\ell = \frac{N}{2}$ columns is a symmetry of $F(q)$.
Writing $s$ for the reflection about the equator, the glide reflection of Theorem~\ref{thm:glide} factors as $g = t \circ s$.
Since $t$ and $g$ are symmetries, $s = t^{-1} \circ g$ is one also, and the equator is a reflection line.

Conversely, suppose the equator reflection $s$ is a symmetry of $F(q)$.
Then $t = g \circ s$ is a horizontal translation by $\frac{N}{2}$.
Being a symmetry, $t$ preserves the rows and carries integer columns to integer columns.
Hence $\frac{N}{2}$ is an integer, $N$ is even, and $q$ has period $\frac{N}{2}$.
By Lemma~\ref{lem:equivariance}, the $180^\circ$ rotation of the $N$-gon fixes $\tri$, that is, $\tri$ is centrally symmetric.
\end{proof}

\begin{theorem}
\label{thm:main-An}
Let $\tri$ be a triangulation of a convex $(n+3)$-gon, and let $F(q)$ be the Conway--Coxeter frieze generated by its quiddity sequence $q$.
Then the Conway symbol of $F(q)$ is
\[
  \begin{cases}
    2{*}\infty       & \text{if $\tri$ is bilateral and not centrally symmetric,}\\
    \infty{*}        & \text{if $\tri$ is centrally symmetric and not bilateral,}\\
    {*}22\infty      & \text{if $\tri$ is bilateral and centrally symmetric,}\\
    \infty{\times}   & \text{otherwise.}
  \end{cases}
\]
\end{theorem}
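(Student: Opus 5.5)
The plan is to combine Theorem~\ref{thm:glide} with Lemma~\ref{lem:identification}, so that the whole question reduces to detecting two kinds of reflections.

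First I will check that the value-preserving symmetry group $G$ of the embedded frieze $F(q)$ is a frieze group. It is discrete, because $G$ permutes the discrete set of entry positions. Its translation subgroup is infinite cyclic: it contains the translation by $N$ columns (Coxeter's periodicity), and every translation in $G$ preserves the rows, so it is horizontal and moves integer columns to integer columns. By Theorem~\ref{thm:glide}, $G$ contains the glide reflection $g$ along the equator. Lemma~\ref{lem:identification} then places $G$ among the four types $\infty{\times}$, $\infty{*}$, $2{*}\infty$, ${*}22\infty$, and these four are told apart by two yes/no questions. Does $G$ contain a vertical reflection? Does $G$ contain the equator reflection?

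Second, I will answer the two questions in terms of $\tri$.

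\emph{Vertical reflections.} By Lemma~\ref{lem:equivariance}, $F(q)$ admits a vertical reflection exactly when $\tri$ is bilateral. One point needs care. A vertical reflection of the embedded band must carry the quiddity row (row $1$) to itself, since it preserves each horizontal line. Its axis must also send the lattice of row-$1$ positions $j+\tfrac12$ to itself. So it induces a map $j\mapsto c-j$ on that row, and the lemma applies. Conversely, any reflection $v_j\mapsto v_{c-j}$ of the polygon fixing $\tri$ yields such a reflection of the frieze.

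\emph{Equator reflection.} By Proposition~\ref{prop:reflections}, the equator is a mirror of $G$ exactly when $\tri$ is centrally symmetric.

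Combining these gives the four cases:
\begin{itemize}
\item bilateral only: $2{*}\infty$;
\item centrally symmetric only: $\infty{*}$;
\item both: ${*}22\infty$;
\item neither: $\infty{\times}$.
\end{itemize}

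The step I expect to be the main obstacle is making sure ``vertical reflection'' in Lemma~\ref{lem:identification}(a) really matches the reflections produced by Lemma~\ref{lem:equivariance}. I need to rule out a reflection in $G$ whose axis is vertical but which does not act on rows as the identity. This cannot happen, because a reflection with vertical axis fixes every $y$-coordinate and so preserves each row. I also need to confirm that any reflection in a frieze group is either vertical or the equator reflection. Any symmetry maps the band to itself, so a reflection axis is either horizontal or vertical. A horizontal axis must be the equator, since the reflection must interchange the two boundary rows or fix them; fixing them would force the reflection to be trivial on rows. With this checked, the case analysis above completes the proof.
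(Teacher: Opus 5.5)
Your proposal is correct and follows essentially the same route as the paper: the glide of Theorem~\ref{thm:glide} and Lemma~\ref{lem:identification} reduce the question to whether a vertical reflection and the equator reflection are present, and Lemma~\ref{lem:equivariance} and Proposition~\ref{prop:reflections} identify these with the bilateral and central symmetry of $\tri$. Your extra checks are sound details that the paper leaves implicit: that the symmetry group is a genuine frieze group, that a vertical-axis reflection induces some $j \mapsto c-j$ on the quiddity row, and that a horizontal mirror can only be the equator.
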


\begin{proof}
The frieze $F(q)$ always admits the glide reflection of Theorem~\ref{thm:glide}.
By Lemma~\ref{lem:identification} its Conway symbol is one of $\infty{\times}$, $\infty{*}$, $2{*}\infty$, ${*}22\infty$, determined by whether $F(q)$ admits a vertical reflection $(a)$ and an equator reflection $(b)$.
By Lemma~\ref{lem:equivariance}, $(a)$ holds if and only if $\tri$ is bilateral.
By Proposition~\ref{prop:reflections}, $(b)$ holds if and only if $\tri$ is centrally symmetric.
The correspondence of Lemma~\ref{lem:identification} then yields the symbol in each of the four cases.
\end{proof}

\subsection{Classification of friezes of types \texorpdfstring{$B_n$, $C_n$, $F_4$ and $G_2$}{B\_n, C\_n, F\_4 and G\_2}}
\label{ssec:main-BCFG}

\begin{lemma}
\label{lem:no-boundary-exchange}
A frieze pattern of type $B_n$ $(n \ge 2)$, of type $C_n$ $(n \ge 2,\ n \ne 3)$, of type $F_4$ or of type $G_2$ does not admit boundary-exchange symmetry.
\end{lemma}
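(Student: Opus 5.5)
We argue by contradiction. Suppose $F$ admits a boundary-exchange $\sigma$. By Lemma~\ref{lem:boundary-mech}, $\sigma$ carries row $x_i$ to row $x_{n+1-i}$. It also carries every sub-diamond to a sub-diamond on the image rows, exchanging apex and base and fixing the unordered pair of sides. Since $\sigma$ preserves values, the relation satisfied by a sub-diamond $D$ is also satisfied by the image diamond $\sigma(D)$ with apex and base read in swapped roles. Write $\delta$ for a sub-diamond centred on the modified row with entries $a,d$ (sides), $b$ (apex) and $c$ (base).
\begin{itemize}
\item In type $B_n$, $\delta$ lies on row $x_n$ with $ad-b^2c=1$. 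Its image lies on row $x_1$, where the relation is classical. Reading that relation through $\sigma$ gives $ad-bc=1$.
\item In type $C_n$ with $n\ne 3$, $\delta$ lies on row $x_{n-1}$ with $ad-bc^2=1$, and its image lies on row $x_2$. For $n\ge 4$ the row-$x_2$ relation is classical, giving $ad-bc=1$. For $n=2$ the modified row is $x_1$ and it maps to $x_2$, whose relation $x_2x_2'=x_1'+1$ is classical.
\item In type $F_4$, $\delta$ lies on row $x_3$ and maps to row $x_2$, whose relation $x_2x_2'=x_1'x_3+1$ is classical.
\item In type $G_2$, $\delta$ lies on row $x_1$ and maps to row $x_2$, which is again classical.
\end{itemize}
In every case we obtain, on the same four entries, both the modified and the classical relation. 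Comparing them gives $b^2c=bc$ (or $bc^2=bc$, respectively $bc^3=bc$). Since all entries are positive, this forces the squared or cubed entry to equal $1$ on every modified sub-diamond.

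The remaining step, which is the main one, is to show that this cannot hold along a whole row. In type $B_n$ it gives $x'_{n-1}=1$ for every column, so row $x_{n-1}$ is constantly $1$. I would then propagate this using the unimodular relations. From $x_{n-1}x'_{n-1}=x'_{n-2}x_n+1$ we get $x'_{n-2}x_n=0$, contradicting positivity. The same mechanism works in the other types.
\begin{itemize}
\item In $C_n$, row $x_n$ is constantly $1$, and $x_nx_n'=x'_{n-1}+1$ gives $x'_{n-1}=0$.
\item In $F_4$, row $x_2$ is constantly $1$, and $x_2x_2'=x_1'x_3+1$ gives $x_1'x_3=0$.
\item In $G_2$, row $x_2$ is constantly $1$, and $x_2x_2'=x_1'+1$ gives $x_1'=0$.
\end{itemize}
Each case contradicts positivity, so no boundary-exchange exists.

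I expect the main obstacle to be justifying that the transported relation really is the classical relation on exactly the same four entries. This requires checking that $\sigma$ maps the sub-diamond structure of row $x_i$ bijectively onto that of row $x_{n+1-i}$, which is exactly the content of Lemma~\ref{lem:boundary-mech}. One must also check that the apex/base roles line up with the positions of the squared or cubed entry.

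In type $C_3$ this argument breaks down, because the modified row $x_2$ is sent to itself. That explains the exclusion of $C_3$ from the statement.
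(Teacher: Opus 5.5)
Your proposal is correct and follows essentially the same route as the paper. You use Lemma~\ref{lem:boundary-mech} to transport a modified sub-diamond to a classical one on row $x_{n+1-m}\neq x_m$, compare the two relations to get $v^{e}w=vw$, and contradict positivity via the relation centred on the raised row; the only cosmetic difference is that the paper first shows the raised row is not identically $1$ and picks one sub-diamond with $v\ge 2$, whereas you deduce $v=1$ everywhere and then reach the contradiction.
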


\begin{proof}
Write $m$ for the modified centre row, $m = n$ for $B_n$, $m = n-1$ for $C_n$, $m = 3$ for $F_4$ and $m = 1$ for $G_2$ (Subsection~\ref{ssec:BCFG}), and $e$ for the exponent in the modified relation, $e = 3$ for $G_2$ and $e = 2$ otherwise.
By Lemma~\ref{lem:boundary-mech} a boundary-exchange $\sigma$ carries the modified sub-diamonds, centred on row $x_m$, to sub-diamonds centred on row $x_{n+1-m}$, and $n+1-m$ equals $1$ for type $B_n$ and $2$ for the other three types.
As $n \ge 2$ for type $B_n$ and $n \ne 3$ for type $C_n$, the image row differs from the modified row in every case, and the image sub-diamonds obey the classical relation.
The vertex raised to the power $e$ in a modified sub-diamond ranges over row $x_{n-1}$ for $B_n$, over row $x_n$ for $C_n$ and over row $x_2$ for $F_4$ and $G_2$, and that row is not identically $1$.
Indeed, the relation centred on that row, namely
\[ x_{n-1,j}\, x_{n-1,j+1} = x_{n-2,j+1}\, x_{n,j} + 1 \]
for $B_n$, $x_{n,j}\, x_{n,j+1} = x_{n-1,j+1} + 1$ for $C_n$, $x_{2,j}\, x_{2,j+1} = x_{1,j+1}\, x_{3,j} + 1$ for $F_4$ and $x_{2,j}\, x_{2,j+1} = x_{1,j+1} + 1$ for $G_2$, makes the product of two consecutive entries at least $2$.
Choose a modified sub-diamond $\mathcal{D}$ whose raised vertex $v$ is at least $2$, and write $w$ for the remaining apex-or-base vertex.
The relation at $\mathcal{D}$ reads $ad - v^{e} w = 1$.
By Lemma~\ref{lem:boundary-mech} the image $\sigma(\mathcal{D})$ shares the side product $ad$ and the values $v, w$, with apex and base interchanged, and obeys the classical relation, giving $ad - vw = 1$.
Subtracting gives $v^{e} w = v w$, whence $v = 1$, a contradiction.
\end{proof}

\begin{remark}
\label{rmk:C3-counterexample}
Lemma~\ref{lem:no-boundary-exchange} fails for $C_3$.
A counterexample is the constant frieze with $x_1 = x_3 = 2$ and $x_2 = 3$,
\[
\begin{array}{*{11}{c}}
  & 1 & & 1 & & 1 & & 1 & & 1 & \\
2 & & 2 & & 2 & & 2 & & 2 & & \cdots \\
  & 3 & & 3 & & 3 & & 3 & & 3 & \\
2 & & 2 & & 2 & & 2 & & 2 & & \cdots \\
  & 1 & & 1 & & 1 & & 1 & & 1 &
\end{array}
\]
which obeys the type-$C_3$ relations ($2 \cdot 2 - 1 \cdot 3 = 1$ on $x_1, x_3$ and $3 \cdot 3 - 2 \cdot 2^{2} = 1$ on $x_2$) and is invariant under its equator reflection, a boundary-exchange.
Its Conway symbol is ${*}22\infty$, computed in Proposition~\ref{prop:C3-classification}.
\end{remark}

\begin{lemma}
\label{lem:x1-determines}
Let $F$ be a frieze pattern of type $B_n$ or $C_n$ $(n \ge 2)$, or of type $F_4$ or $G_2$, with first interior row $x_1 = (x_{1,j})_{j \in \Z}$.
\begin{enumerate}
\item $F$ is determined by $x_1$.
\item $F$ admits a vertical reflection if and only if $x_1$ is \emph{palindromic}, that is, $x_{1,\,c-j} = x_{1,j}$ for some $c \in \Z$ and all $j$.
\end{enumerate}
\end{lemma}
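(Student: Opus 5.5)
For part (1), I will recover each row from the rows above it by solving the unimodular relation of Subsection~\ref{ssec:BCFG} centred on the row just above. The relation in each type expresses $x_{i,j}x_{i,j+1}$ as $1$ plus a monomial. That monomial involves $x_{i-1,j+1}$ (possibly squared or cubed) and $x_{i+1,j}$ (possibly squared or cubed). All entries are positive integers, so I can divide by a positive power. The relation centred on row $i$ therefore gives
\[
  x_{i+1,j}^{\,e} = \frac{x_{i,j}\,x_{i,j+1} - 1}{x_{i-1,j+1}^{\,e'}},
\]
with the appropriate exponents $e, e' \in \{0,1,2,3\}$ read off type by type. Here $x_0 \equiv 1$, and if $e = 0$ then the row-$(i+1)$ factor is absent. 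Positivity makes the $e$-th root unique. Inducting on $i$, the rows $x_1, \dots, x_i$ determine $x_{i+1}$ entrywise.

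The one place needing care is a row whose relation has no lower factor. For $G_2$, row $x_1$ satisfies $x_1x_1' = x_2^3+1$, which has exponent $3$ on $x_2$, and this is fine. For $C_n$, row $x_n$ is last, so nothing lies below it. I will check each list of relations to confirm that every row $x_{i+1}$ with $i+1 \le n$ appears, to a positive power, in the relation centred on row $x_i$. This check is the main routine obstacle; it holds because the Dynkin diagrams are paths and $|a_{i+1,i}| \ge 1$.

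For part (2), first suppose $F$ admits a vertical reflection $\rho$. It preserves each horizontal line and is value-preserving, so it maps row $x_1$ to itself by $j \mapsto c-j$ for some $c$. The entries of row $1$ sit at $j+\tfrac12$, and a reflection of the line preserving this lattice has the form $x \mapsto c+1-x$ with $c \in \Z$. Hence $x_1$ is palindromic.

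Conversely, suppose $x_{1,c-j} = x_{1,j}$ for all $j$. Let $\rho$ be the reflection in the vertical line $x = (c+1)/2$, which maps the row-$1$ positions correctly. Define $F^\rho$ as the array with $F^\rho(p) = F(\rho(p))$ at every embedded position $p$. I claim $F^\rho$ is again a frieze pattern of the same type. Each relation $x_{i,j}x_{i,j+1} = 1 + (\text{apex}')\cdot(\text{base})$ involves a horizontal pair of sides and an apex and base placed symmetrically, one half-step to the right above and one half-step to the left below, in the sense of the embedding. Reflecting swaps the two sides, which leaves the product unchanged, and keeps the apex above and the base below. So $\rho$ maps each sub-diamond onto a sub-diamond with the same apex, base and exponents. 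I will verify this index shift explicitly: reflection sends $(i,j) \mapsto (i, c' - j - i)$ for suitable $c'$, and the diamond $\{(i,j),(i,j+1),(i-1,j+1),(i+1,j)\}$ goes to a diamond of the same shape. Then $F^\rho$ satisfies the same relations and has first row $x_1$, so $F^\rho = F$ by part (1). Therefore $\rho$ is a symmetry.
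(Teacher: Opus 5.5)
Your proposal is correct and takes essentially the paper's route. In part (1) you descend row by row, solving the relation centred on row $x_i$ for the positive root in row $x_{i+1}$. In part (2) you use that a vertical reflection carries sub-diamonds to sub-diamonds with the two sides swapped, so the reflected array is a frieze of the same type whose first row is the reversal of $x_1$, and uniqueness from (1) finishes the argument; the only difference is that you prove the forward direction of (2) directly from value preservation on row $x_1$, which changes nothing of substance.
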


\begin{proof}
(1) The sub-diamond centred on row $x_i$ has its apex in row $x_{i-1}$, its base in row $x_{i+1}$, and its two side vertices in row $x_i$.
Solving the local relation for the base expresses each entry of row $x_{i+1}$ through entries of rows $x_{i-1}$ and $x_i$.
A classical relation gives the base as $\frac{ad-1}{b}$, and the modified relation of type $C_n$, with squared base, gives it as the positive square root of $\frac{ad-1}{b}$.
The modified relation of type $F_4$, with squared apex, gives the base as $\frac{ad-1}{b^{2}}$, and that of type $G_2$, with cubed base and apex $1$, gives it as the positive cube root of $ad - 1$.
For type $B_n$ the descent uses the classical relations alone.
Starting from the boundary row $x_0 \equiv 1$ and the given row $x_1$, this determines $x_2, \dots, x_n$ in turn.
The one local relation not consumed by the descent, namely the relation centred on row $x_n$, is then a consistency condition, satisfied because $F$ is a frieze.
Hence $x_1$ determines $F$.
This argument establishes uniqueness rather than existence, and we do not claim that an arbitrary positive periodic row is the first interior row of a frieze of the given type.

(2) A vertical reflection of the embedded frieze is the reflection $r$ in a vertical line $x = \frac{c}{2}$ with $c \in \Z$.
It preserves each row and acts on row $x_i$ by $j \mapsto c_i - j$ with $c_i = c - i$.
Because the classical and modified relations are symmetric in the two side vertices $a \leftrightarrow d$, the reflected array $r(F)$ again satisfies the relations of the same type and is bounded by the same rows of $1$s.
Hence $r(F)$ is a frieze of the same type, and its first interior row is the reversal $(x_{1,\,c_1-j})_{j}$ of $x_1$.
By part~(1), two friezes of the same type with the same first interior row coincide, and thus $r(F) = F$ exactly when $x_{1,\,c_1-j} = x_{1,j}$ for all $j$.
Since $c_1$ ranges over $\Z$ as the axis varies, $F$ admits a vertical reflection if and only if $x_1$ is palindromic.
\end{proof}

\begin{theorem}
\label{thm:main-BCFG}
Let $F$ be a frieze pattern of type $B_n$ or $C_n$ with $n \ge 2$, other than type $C_3$, or of type $F_4$ or $G_2$, and let $x_1$ be its first interior row.
Then the Conway symbol of $F$ is ${*}\infty\infty$ if $x_1$ is palindromic, and $\infty\infty$ otherwise.
\end{theorem}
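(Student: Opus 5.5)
The plan is to combine Lemma~\ref{lem:no-boundary-exchange} with Lemma~\ref{lem:boundary-exch-list} and Lemma~\ref{lem:x1-determines}(2).

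First, by Remark~\ref{rmk:FP-provenance} the frieze $F$ is horizontally periodic. So its group $G$ of value-preserving Euclidean symmetries contains a nontrivial translation. Because the entries lie on finitely many horizontal lines, $G$ preserves the band, and it is discrete. Hence $G$ is a frieze group with one of the seven symbols of Theorem~\ref{thm:magic}.

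By Lemma~\ref{lem:no-boundary-exchange}, $F$ admits no boundary-exchange symmetry. Lemma~\ref{lem:boundary-exch-list} then excludes $\infty{*}$, $\infty{\times}$, $22\infty$, $2{*}\infty$ and ${*}22\infty$. The symbol is therefore ${*}\infty\infty$ or $\infty\infty$.

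To separate these two, I read off Remark~\ref{rmk:7friezes}. Every isometry of the band that preserves the two boundary lines is one of three kinds: a horizontal translation, a reflection in a vertical line, or a glide with vertical axis. The last kind is impossible, since a glide translates along its axis and so would move points vertically off the band. The type ${*}\infty\infty$ contains vertical reflections, while $\infty\infty$ consists of translations only. Hence the symbol is ${*}\infty\infty$ exactly when $G$ contains a vertical reflection.

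One detail needs checking: a vertical reflection symmetry has axis $x = c/2$ with $c \in \Z$, as assumed in the proof of Lemma~\ref{lem:x1-determines}(2). This holds because the reflection preserves each row, and row $x_1$ occupies the points $j + \tfrac12$ with $j \in \Z$, which forces the axis into $\tfrac12\Z$. With that settled, Lemma~\ref{lem:x1-determines}(2) says a vertical reflection exists if and only if $x_1$ is palindromic, which completes the dichotomy.

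I expect the main point of care to be this last identification of the two remaining types. It requires confirming that, with boundary exchanges ruled out, the only non-translational symmetries are vertical reflections, so that the presence of a single one is what distinguishes ${*}\infty\infty$ from $\infty\infty$. The rest is a direct assembly of the cited lemmas.
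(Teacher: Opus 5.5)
Your proposal is correct and is essentially the paper's proof: Lemma~\ref{lem:no-boundary-exchange} with Lemma~\ref{lem:boundary-exch-list} leaves only $\infty\infty$ and ${*}\infty\infty$, these two are separated by the presence of a vertical reflection, and Lemma~\ref{lem:x1-determines}\,(2) detects that reflection. Your extra checks are sound and are points the paper takes for granted: that the symmetry group is a frieze group, that the only symmetries preserving both boundary lines are horizontal translations and vertical reflections, and that a mirror axis must lie in $\frac12\Z$.
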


\begin{proof}
By Lemma~\ref{lem:no-boundary-exchange} the frieze admits no boundary-exchange symmetry, and by Lemma~\ref{lem:boundary-exch-list} its Conway symbol is $\infty\infty$ or ${*}\infty\infty$.
These two are distinguished by the presence of a vertical reflection.
By Lemma~\ref{lem:x1-determines}\,(2) a vertical reflection is present exactly when $x_1$ is palindromic, which yields the stated dichotomy.
\end{proof}

Theorem~\ref{thm:main-BCFG} is the analogue of Theorem~\ref{thm:main-An} for the non-simply-laced types, the first interior row $x_1$ playing the role that the quiddity sequence plays for type $A_n$.
For $G_2$ the constant frieze with $x_1 \equiv 3$ and $x_2 \equiv 2$ has symbol ${*}\infty\infty$, while the unitary frieze of Example~\ref{ex:types}, whose first interior row $(1, 2, 14, 9)$ is not palindromic, has symbol $\infty\infty$.
For $F_4$ the first diagonal $(1, 2, 3, 2)$ generates a frieze of period $7$ whose first interior row $(1, 3, 2, 4, 4, 2, 3)$ is palindromic, and its symbol is ${*}\infty\infty$.

For type $B_n$ this dichotomy has a transparent geometric source, the folding of centrally symmetric triangulations.

The bijection of the following proposition is due to Fontaine and Plamondon~\cite[Theorem~4.2]{FoP}, who obtain it by folding the cluster algebra of type $A_{2n-1}$, the folded type being their $C_n$ (our $B_n$).
The proof below gives an elementary, self-contained account at the level of frieze arrays, and reads off the Conway symbol.

\begin{proposition}
\label{prop:geometric-B}
Folding induces a bijection between the centrally symmetric triangulations of a convex $(2n+2)$-gon and the frieze patterns of type $B_n$, under which the first interior row of the frieze is the quiddity sequence of the triangulation.
The Conway symbol of the frieze is ${*}\infty\infty$ if the triangulation is bilateral, and $\infty\infty$ otherwise.
\end{proposition}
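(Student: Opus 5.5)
Let $\tri$ be a centrally symmetric triangulation of a convex $(2n+2)$-gon, with quiddity sequence $q$ of period $n+1$ (central symmetry gives $q_{j+n+1} = q_j$). The plan is to fold the Conway--Coxeter frieze $F(q)$, which has width $2n-1$ and rows $\zeta_1, \dots, \zeta_{2n-1}$, along its equator.

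\emph{Step 1: the fold.} By Proposition~\ref{prop:reflections}, the equator of $F(q)$ is a reflection line. In indices this gives $\zeta_{i,j} = \zeta_{2n-i,\,j+i-n}$, so the rows $\zeta_i$ and $\zeta_{2n-i}$ are shifted copies of each other, and the central row $\zeta_n$ is self-paired. Keep the top half and set $x_i := \zeta_i$ for $1 \le i \le n$.

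\emph{Step 2: the relations.} For $i \le n-1$, the classical diamond relations of $F(q)$ are exactly the type-$B_n$ relations on rows $x_1, \dots, x_{n-1}$. The relation centred on row $\zeta_n$ reads $x_{n,j}x_{n,j+1} = \zeta_{n-1,j+1}\zeta_{n+1,j} + 1$. The mirror identity turns $\zeta_{n+1,j}$ into an entry of row $n-1$, and it should be exactly $x_{n-1,j+1}$. This gives $x_n x_n' = (x_{n-1}')^2 + 1$, the modified $B_n$ relation of Subsection~\ref{sssec:Bn}. The index bookkeeping here is the step I expect to be the main obstacle: one must check that the equator reflection sends $P(n+1,j)$ to $P(n-1,j+1)$, including the half-step offsets. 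I would verify it first with $\zeta_{i,j} = \zeta_{2n-i,\,j+i-n}$ and then sanity-check it on the $B_3$ example of Example~\ref{ex:types}. The result is a frieze of type $B_n$ whose first interior row is $q$.

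\emph{Step 3: bijectivity.} Injectivity follows because $q$ determines $\tri$ (Theorem~\ref{thm:CC}). For surjectivity, given a $B_n$ frieze $F$, unfold it by appending the mirrored rows $\zeta_{2n-i}$ built from $x_i$ via the same shift. The inverse computation of Step 2 shows that every diamond of the unfolded array is classical: diamonds above row $n$ are the $B_n$ relations, those below are their mirrors, and the central one is the modified relation. Together with the boundary rows of $1$s, this gives a positive frieze of width $2n-1$. By Theorem~\ref{thm:CC} it comes from a triangulation $\tri$ of a $(2n+2)$-gon. This frieze is invariant under the equator reflection by construction, so $\tri$ is centrally symmetric by Proposition~\ref{prop:reflections}. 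The two constructions are clearly mutually inverse.

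\emph{Step 4: the Conway symbol.} By Theorem~\ref{thm:main-BCFG}, the symbol is ${*}\infty\infty$ exactly when $x_1 = q$ is palindromic, and $\infty\infty$ otherwise. By Lemma~\ref{lem:equivariance}, $q$ is palindromic exactly when $F(q)$ has a vertical reflection, that is, exactly when $\tri$ is bilateral. This gives the stated dichotomy.
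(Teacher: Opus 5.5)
Your proposal is correct and follows the paper's proof essentially step for step. You fold the equator-symmetric Conway--Coxeter frieze using $\zeta_{n+1,j}=\zeta_{n-1,j+1}$ (which follows at once from $\zeta_{i,j}=\zeta_{2n-i,\,j+i-n}$ at $i=n+1$) to obtain the squared-apex relation. You invert by unfolding with $z_{n+k,j}=x_{n-k,j+k}$, and you read off the symbol from Theorem~\ref{thm:main-BCFG} together with Lemma~\ref{lem:equivariance}.
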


\begin{proof}
Let $\tri$ be a centrally symmetric triangulation of the $(2n+2)$-gon and let $F$ be its Conway--Coxeter frieze, of width $2n-1$, with interior rows $z_1,\dots,z_{2n-1}$ and quiddity $z_1=q$.
By Proposition~\ref{prop:reflections} the central symmetry of $\tri$ is the equator reflection $s$ of $F$.
It fixes the middle row $z_n$ and, on the embedded array $P(i,j)=(j+\frac i2,-i)$, sends $(i,j)$ to $(2n-i,\,j+i-n)$, and therefore $z_{n+1,\,j}=z_{n-1,\,j+1}$ for every $j$.
Put $x_i=z_i$ for $1\le i\le n$.
For $1\le i\le n-1$ the sub-diamond of $F$ centred on $z_i$ is classical and involves only $z_{i-1},z_i,z_{i+1}$ (with $z_0\equiv1$), and thus the $x_i$ obey the classical relation.
The sub-diamond centred on $z_n$ is classical, namely $z_{n,j}z_{n,j+1}-z_{n-1,j+1}\,z_{n+1,j}=1$, and substituting $z_{n+1,j}=z_{n-1,j+1}$ turns it into
\[
  x_{n,j}\,x_{n,j+1}-x_{n-1,j+1}^{\,2}=1,
\]
the modified relation $ad-b^{2}c=1$ of type $B_n$ with boundary value $c=1$.
Hence $x_1,\dots,x_n$ is a frieze of type $B_n$ with first interior row $x_1=q$.
To see that folding is a bijection we exhibit its inverse.
Let $(x_{i,j})_{1\le i\le n}$ be an arbitrary frieze of type $B_n$, with boundary rows $x_0\equiv x_{n+1}\equiv1$, and define an array $(z_{i,j})_{0\le i\le 2n,\ j\in\Z}$ by
\[
  z_{i,j}=x_{i,j}\quad(0\le i\le n),
  \qquad
  z_{n+k,\,j}=x_{n-k,\,j+k}\quad(1\le k\le n).
\]
Then $z_0\equiv z_{2n}\equiv1$, and by construction $z$ is invariant under the reflection $s$ in the line $y=-n$, which sends $(i,j)$ to $(2n-i,\,j+i-n)$.
We check that every sub-diamond of $z$ satisfies the classical relation.
Those centred on a row $z_i$ with $1\le i\le n-1$ are the classical sub-diamonds of $x$.
For the one centred on the middle row $z_n$ the same computation follows in reverse from the modified relation satisfied by $x$, since $z_{n+1,j}=x_{n-1,j+1}$.
Those centred on a lower row $z_{n+k}$ $(1\le k\le n-1)$ are the $s$-images of the classical sub-diamonds centred on $z_{n-k}=x_{n-k}$, and therefore again classical, the reflection $s$ interchanging apex and base and leaving $ad-bc$ invariant.
Thus $z$ is a positive frieze of width $2n-1$ bounded by rows of $1$s, and by Theorem~\ref{thm:CC} it is the Conway--Coxeter frieze of a triangulation $\tri$ of a convex $(2n+2)$-gon, with quiddity $z_1=x_1$.
By Proposition~\ref{prop:reflections} its $s$-invariance makes $\tri$ centrally symmetric.
Folding records the upper $n$ interior rows of this $s$-symmetric array, and the construction above reconstitutes the array from them, and thus the two assignments are mutually inverse.
Finally, $q$ is palindromic if and only if $\tri$ is bilateral (Lemma~\ref{lem:equivariance}), while the symbol is ${*}\infty\infty$ if and only if $x_1=q$ is palindromic (Theorem~\ref{thm:main-BCFG}).
The criterion follows.
\end{proof}

Type $C_n$ is the folding of $D_{n+1}$ rather than of $A_{2n-1}$, and not all of its friezes are unitary, the constant frieze of Remark~\ref{rmk:C3-counterexample} being an example.
Its counterpart of Proposition~\ref{prop:geometric-B} is Proposition~\ref{prop:fold} together with Corollary~\ref{cor:Cn-geometric}.

The exceptional type $C_3$ realises four symbols, which the following proposition determines.

\begin{proposition}
\label{prop:C3-classification}
Let $F$ be a frieze pattern of type $C_3$, with interior rows $x_1, x_2, x_3$.
\begin{enumerate}
\item $F$ admits a boundary-exchange symmetry if and only if $x_{3,j} = x_{1,j+1}$ for all $j$, equivalently, the apex equals the base in every modified sub-diamond.
Whenever $F$ admits a boundary-exchange, the reflection in the equator is itself a symmetry of $F$.
\item Accordingly the Conway symbol of $F$ is determined by the two independent invariants ``$x_1$ palindromic'' and ``$F$ boundary-exchange-symmetric'' as in the following table.
\[
\renewcommand{\arraystretch}{1.15}
\begin{array}{cc|c}
 \text{$x_1$ palindromic} & \text{boundary-exchange} & \text{symbol}\\ \hline
 \text{no}  & \text{no}  & \infty\infty\\
 \text{yes} & \text{no}  & {*}\infty\infty\\
 \text{no}  & \text{yes} & \infty{*}\\
 \text{yes} & \text{yes} & {*}22\infty
\end{array}
\]
\end{enumerate}
\end{proposition}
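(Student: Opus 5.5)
Type $C_3$ has relations $x_1x_1' = x_2 + 1$, $x_2x_2' = x_1' x_3^2 + 1$ and $x_3x_3' = x_2' + 1$, and the modified row is the central row $x_2$. The plan is to run the argument of Lemma~\ref{lem:no-boundary-exchange}, keeping careful track of positions this time.

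\textbf{Part (1), necessity.} Let $\sigma$ be a boundary-exchange. By Lemma~\ref{lem:boundary-mech} it maps row $x_2$ to itself and maps modified sub-diamonds to modified sub-diamonds, exchanging apex and base. The modified relation reads $ad - b c^2 = 1$, with apex $b = x_{1,j+1}$ and base $c = x_{3,j}$. Its image has the same side product $ad$, but with the roles reversed: apex $c$ and base $b$. So it reads $ad - c b^2 = 1$. Subtracting the two relations gives $bc(c-b) = 0$, hence $b = c$. Since every modified sub-diamond is the image of one under $\sigma$, this holds everywhere, giving $x_{3,j} = x_{1,j+1}$ for all $j$.

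\textbf{Part (1), sufficiency.} Assume $x_{3,j} = x_{1,j+1}$. Then row $x_3$ is exactly the equator reflection $s$ of row $x_1$ in the embedding $P(i,j)$: the map $(i,j) \mapsto (4-i,\, j+i-2)$ sends $(1,j)$ to $(3,j-1)$. The plan is to check that $s(F)$ is again a $C_3$ frieze. For rows $x_1$ and $x_3$, the relations $x_1x_1' = x_2 + 1$ and $x_3x_3' = x_2' + 1$ are swapped by $s$. For row $x_2$, the relation has apex equal to base, so it is symmetric under swapping them. Alternatively, one can check $s(F) = F$ directly: $s$ fixes row $x_2$, maps row $x_1$ to row $x_3$ and vice versa, and these agree by hypothesis.

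So the equator reflection lies in the symmetry group, which proves the last sentence of (1). Consequently, any boundary-exchange forces the condition, and the condition forces $s$ to be a symmetry.

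\textbf{Part (2).} The symmetries that preserve the boundary rows are translations and vertical reflections. By Lemma~\ref{lem:x1-determines}(2), which applies to $C_n$ including $n = 3$, a vertical reflection exists if and only if $x_1$ is palindromic. Combining:
\begin{itemize}
\item If there is no boundary-exchange, Lemma~\ref{lem:boundary-exch-list} leaves $\infty\infty$ or ${*}\infty\infty$, and palindromicity of $x_1$ decides between them, exactly as in Theorem~\ref{thm:main-BCFG}.
\item If there is a boundary-exchange, the equator reflection $s$ is present, so the symbol lies in $\{\infty{*}, {*}22\infty\}$; these are the only two types containing an equator mirror, read off Remark~\ref{rmk:7friezes}. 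The symbol is ${*}22\infty$ exactly when a vertical mirror also exists, that is, when $x_1$ is palindromic.
\end{itemize}

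\textbf{Independence and realisation.} To show all four rows of the table occur, the plan is to exhibit examples:
\begin{itemize}
\item the constant frieze of Remark~\ref{rmk:C3-counterexample} gives ${*}22\infty$;
\item the unitary frieze of Example~\ref{ex:types}, with $x_1 = (1,2,2,6)$, is not palindromic and has $x_3 \ne$ shifted $x_1$, so it gives $\infty\infty$;
\item for the remaining two cases, find a small $C_3$ frieze with $x_3 = $ shift of $x_1$ but $x_1$ not palindromic, and one with $x_1$ palindromic but no exchange, by searching over small initial diagonals.
\end{itemize}

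\textbf{Main obstacle.} The step I expect to be hardest is producing explicit witnesses for the two middle rows of the table. A natural source is the $D_4$ picture, namely centrally symmetric or bilateral tagged triangulations, but a direct search over diagonals is likely quickest.
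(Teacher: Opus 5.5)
Your proposal is correct and follows the paper's proof essentially step for step. You check the equator reflection $(i,j)\mapsto(4-i,\,j+i-2)$ directly under $x_{3,j}=x_{1,j+1}$. For the converse you subtract $ad-bc^{2}=1$ from $ad-cb^{2}=1$ at a modified sub-diamond and its image. Part (2) then separates $\infty\infty$ from ${*}\infty\infty$, and $\infty{*}$ from ${*}22\infty$, by palindromicity of $x_1$ via Lemma~\ref{lem:x1-determines}. The two middle-row witnesses you leave to a search are not part of the paper's proof; the paper supplies them in Example~\ref{ex:C3-symbols} with the first diagonals $(1,2,1)$, giving ${*}\infty\infty$, and $(1,1,2)$, giving $\infty{*}$.
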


\begin{proof}
(1) Assume first that $x_{3,j} = x_{1,j+1}$ for all $j$.
The reflection $s$ in the equator fixes the middle row $x_2$ pointwise and carries $P(1,j)$ to $P(3,j-1)$.
The values agree precisely when $x_{1,j} = x_{3,j-1}$, which is the hypothesis.
Hence $s$ is a symmetry interchanging the two boundary rows, a boundary-exchange.
Conversely, let $\sigma$ be any boundary-exchange.
By Lemma~\ref{lem:boundary-mech} it fixes the middle row $x_2$ and carries each modified sub-diamond $\mathcal{D}$ to a modified sub-diamond $\sigma(\mathcal{D})$, with apex and base exchanged.
Write $w$ for the apex of $\mathcal{D}$ (in $x_1$) and $v$ for its squared base (in $x_3$).
The modified relations at $\mathcal{D}$ and $\sigma(\mathcal{D})$ read $ad - w v^{2} = 1$ and $ad - v w^{2} = 1$, with common side product $ad$, and subtracting them gives $v w (v-w) = 0$, whence $v = w$.
Thus apex equals base at every modified sub-diamond, that is $x_{3,j} = x_{1,j+1}$ for all $j$.
By the forward direction the equator reflection then lies in the symmetry group of $F$, which is the second assertion.

(2) Beyond translation, two features may occur, a vertical reflection $r$ governed by Lemma~\ref{lem:x1-determines}\,(2) and a boundary-exchange, the latter present by part~(1) exactly when the equator reflection $s$ is a symmetry.
Suppose first that $F$ admits no boundary-exchange.
By Lemma~\ref{lem:boundary-exch-list} its symbol is then $\infty\infty$ or ${*}\infty\infty$, the former being translations alone and the latter adjoining the vertical mirrors, and $r$ separates the two.
Suppose instead that $F$ admits a boundary-exchange.
Then $s$ lies in its symmetry group, and among the seven symbols only $\infty{*}$ and ${*}22\infty$ carry an equator mirror (Remark~\ref{rmk:7friezes}).
Here too $r$ separates them, the vertical mirrors of ${*}22\infty$ meeting the equator mirror in $2$-fold corner points.
This gives the four entries of the table.
\end{proof}

\begin{example}
\label{ex:C3-symbols}
All four symbols of \textup{Proposition~\ref{prop:C3-classification}} occur.
A triple $(a_1,a_2,a_3)$ denotes the first diagonal $x_{i,1}=a_i$.
The displayed friezes have period $4$ and are drawn over one period in the diamond layout of Example~\ref{ex:CC-frieze}, the top and bottom rows being the boundary $1$s.
The unitary frieze $(1,1,1)$, displayed in Example~\ref{ex:types}, is neither palindromic nor boundary-exchange-symmetric and gives $\infty\infty$.
The first diagonal $(1,2,1)$ generates
\[
\begin{array}{*{11}{c}}
  & 1 & & 1 & & 1 & & 1 & & 1 & \\
1 & & 3 & & 1 & & 6 & & 1 & & \cdots \\
  & 2 & & 2 & & 5 & & 5 & & 2 & \\
3 & & 1 & & 3 & & 2 & & 3 & & \cdots \\
  & 1 & & 1 & & 1 & & 1 & & 1 &
\end{array}
\]
in which $x_1$ is palindromic and no boundary-exchange occurs, giving ${*}\infty\infty$.
The first diagonal $(1,1,2)$ generates
\[
\begin{array}{*{11}{c}}
  & 1 & & 1 & & 1 & & 1 & & 1 & \\
1 & & 2 & & 5 & & 3 & & 1 & & \cdots \\
  & 1 & & 9 & & 14 & & 2 & & 1 & \\
1 & & 2 & & 5 & & 3 & & 1 & & \cdots \\
  & 1 & & 1 & & 1 & & 1 & & 1 &
\end{array}
\]
in which $x_{3,j}=x_{1,j+1}$ for all $j$ while $x_1$ is not palindromic, giving $\infty{*}$.
Finally the constant frieze $(2,3,2)$ of Remark~\ref{rmk:C3-counterexample}, with $x_1$ palindromic and a boundary-exchange, gives ${*}22\infty$.
\end{example}

\section{Classification for type \texorpdfstring{$D_n$}{D\_n}}
\label{sec:main-D}

Type $D_n$ departs from the previous types.
Its array carries the doubly-dense bifurcation line of Subsection~\ref{ssec:Dn}, and the vertical reflection must be read from the once-punctured polygon of Schiffler and Baur--Marsh rather than from a convex polygon.

\subsection{The array-level dichotomy}
\label{ssec:main-Dn}

We now classify the Conway symbols of friezes of type $D_n$.
As for the other types, the symbol is determined by two features beyond translation, a boundary-exchange and a vertical reflection.
We take them in turn.

\begin{lemma}\label{lem:D-no-boundary}
For $n \ge 5$, a frieze pattern of type $D_n$ admits no boundary-exchange symmetry.
\end{lemma}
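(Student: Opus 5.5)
Suppose $\sigma$ is a boundary-exchange of a frieze $F$ of type $D_n$ with $n \ge 5$; I will derive a contradiction. The argument is geometric: a boundary-exchange must respect how densely each horizontal line is populated, and the bifurcation line is off-centre.

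First I fix what $\sigma$ can be. As in the proof of Lemma~\ref{lem:boundary-exch-list}, $\sigma$ is a value-preserving isometry of the band exchanging the lines $y=0$ and $y=-n$. It is therefore a reflection in the equator $y=-\frac n2$, a glide along it, or a half-turn centred on it. In each case $\sigma$ sends the line $y=-k$ to the line $y=-(n-k)$ and preserves horizontal distances. Since the array is horizontally periodic, $\sigma$ also conjugates the translation subgroup to itself.

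The key step is a density count on horizontal lines. Recall from Subsection~\ref{ssec:Dn} that the line $y=-k$ carries one entry per unit of horizontal length for $k \ne n-2$. The bifurcation line $y=-(n-2)$ instead carries two entries per unit, interleaving rows $x_{n-2}$ and $x_{n-1}$ at half-integer offsets. Because $\sigma$ maps entries to entries and is an isometry on each line, it preserves this density. Hence $\sigma$ must carry the bifurcation line to itself, so $n-(n-2) = n-2$, that is, $n=4$. This contradicts $n \ge 5$.

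Concretely, for $n\ge5$ the line $y=-(n-2)$ is sent to $y=-2$. Row $x_2$ on $y=-2$ is a single row with one entry per unit period, because $2 \le n-3$ is not a leg index. The set of occupied positions on $y=-2$ is therefore a single coset of $\Z$ in $\mathbb{R}$, while on the bifurcation line it is a coset of $\frac12\Z$, so no isometry can match them.

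The main subtlety I expect is to justify that a ``symmetry'' maps the positions of entries bijectively onto positions of entries, rather than only matching values where points happen to coincide. This is built into the paper's notion of a value-preserving Euclidean symmetry of the embedded array (as used in Lemma~\ref{lem:boundary-mech}), so I will state it as the standing convention. With that convention in place, the coset argument above completes the proof with no arithmetic on the relations~\eqref{eq:Dn-relations}.

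If one prefers to avoid the density argument, there is a fallback in the style of Lemma~\ref{lem:no-boundary-exchange}. One compares the trivalent relation with the classical relation on row $x_2$ and derives $x_{n-1}x_n=1$, which is impossible since the leg relations force entries with product $\ge2$. This route requires more bookkeeping, so I would use it only as a backup.
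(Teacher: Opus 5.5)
Your proposal is correct and is essentially the paper's proof: a boundary-exchange reverses the vertical direction and so sends the bifurcation line $y=-(n-2)$ to $y=-2$. For $n\ge5$ the line $y=-2$ carries only the row $x_2$, with spacing $1$, while the bifurcation line has spacing $\frac12$, and no isometry can match the two; the relation-based fallback is unnecessary and, as sketched, would need more care, so you can drop it.
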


\begin{proof}
A boundary-exchange $\sigma$ is a Euclidean isometry of the embedded frieze interchanging the two bordering lines $y=0$ and $y=-n$.
Being vertical-reversing, it carries horizontal lines to horizontal lines in reverse order, sending the line $y=-\ell$ to the line $y=-(n-\ell)$ and in particular the bifurcation line $y=-(n-2)$ to the line $y=-2$.
The entries on a horizontal line are collinear and equally spaced, and an isometry preserves the spacing of such a configuration.
The bifurcation line $y=-(n-2)$ carries two entries per period, whereas for $n \ge 5$ the line $y=-2$ carries the single row $x_2$ and thus one entry per period (Subsection~\ref{ssec:Dn}).
An isometry cannot carry a line of spacing $\frac12$ onto a line of spacing $1$, and therefore $\sigma$ cannot send $y=-(n-2)$ to $y=-2$.
This contradiction proves the claim.
\end{proof}

\begin{proposition}\label{prop:D4}
Let $F$ be a frieze pattern of type $D_4$ with period $p$.
\begin{enumerate}
\item The reflection in the bifurcation line is a symmetry of $F$ if and only if $x_{4,j} = x_{1,j+1}$ for all $j$.
\item For $0 < \ell < p$, the glide along the bifurcation line with shift $\ell$ is a symmetry of $F$ if and only if $x_{4,j} = x_{1,j+1-\ell}$ for all $j$ and the rows $x_2$ and $x_3$ are invariant under $j \mapsto j + \ell$.
\end{enumerate}
\end{proposition}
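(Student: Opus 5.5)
The plan is a direct coordinate computation. For $n=4$ the bifurcation line $y=-(n-2)=-2$ is also the equator $y=-\frac{n}{2}$. So both isometries in the statement exchange the boundary lines $y=0$ and $y=-4$, fix the line $y=-2$ setwise, and exchange the lines $y=-1$ and $y=-3$. I will list where each row sits under the embedding $Q$ of Subsection~\ref{ssec:Dn}:
\begin{itemize}
\item $x_{1,j}$ sits at $(j+\frac12,-1)$;
\item $x_{2,j}$ sits at $(j+1,-2)$;
\item $x_{3,j}$ sits at $(j+\frac32,-2)$;
\item $x_{4,j}$ sits at $(j+\frac32,-3)$.
\end{itemize}
The boundary $1$s sit at the points of the lines $y=0$ and $y=-4$ with the same fractional parts as the adjacent diagonal conventions. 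I will then write the reflection as $s(x,y)=(x,-4-y)$ and the glide as $g_\ell(x,y)=(x+\ell,-4-y)$, with $\ell$ an integer as in the statement.

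For (1), I will check three things.
\begin{itemize}
\item The boundary points are exchanged, and their value $1$ is preserved automatically, once one checks that the horizontal offsets of the two boundary lines agree modulo $1$. This follows from the embedding, as in Example~\ref{ex:D5-embed}.
\item On the bifurcation line, $s$ fixes every point. Both $x_2$ (integer abscissae) and $x_3$ (half-integer abscissae) are therefore fixed pointwise, which imposes no condition.
\item $s$ sends $(j+\frac12,-1)$ to $(j+\frac12,-3)$, which is the position of $x_{4,j-1}$. So $s$ is value-preserving if and only if $x_{1,j}=x_{4,j-1}$ for all $j$, i.e.\ $x_{4,j}=x_{1,j+1}$.
\end{itemize}
Since $s$ is an involution, this single condition also covers the points of row $x_4$. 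This gives both directions of (1).

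For (2) the same bookkeeping applies to $g_\ell$.
\begin{itemize}
\item Because $\ell\in\Z$, $g_\ell$ preserves the fractional part of abscissae. So on the bifurcation line it carries $x_{2,j}$ to the position of $x_{2,j+\ell}$ and $x_{3,j}$ to the position of $x_{3,j+\ell}$, never mixing the two interleaved rows. This yields exactly the invariance of $x_2$ and $x_3$ under $j\mapsto j+\ell$.
\item $g_\ell$ sends $(j+\frac12,-1)$ to $(j+\ell+\frac12,-3)$, the position of $x_{4,j+\ell-1}$. Value preservation there reads $x_{4,j}=x_{1,j+1-\ell}$.
\item Conversely, $g_\ell$ sends row $x_4$ to row $x_1$ with the inverse index shift. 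Given the first condition, the second follows from reindexing the same identity, since $g_\ell^{-1}$ is the glide with shift $-\ell$ and the condition is symmetric under this reindexing.
\end{itemize}
Boundary $1$s are again automatic. Conversely, if all three conditions hold, $g_\ell$ maps the set of entry positions bijectively onto itself and preserves values, so it is a symmetry.

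The only delicate point is the bookkeeping on the bifurcation line and the boundary lines. One must confirm that an isometry of the required form maps the point set of the embedded frieze to itself, not merely that values match where points land. This is where integrality of $\ell$ matters: a half-integer shift would carry the $x_2$-positions onto the $x_3$-positions, and would be a different case not covered by the statement. I expect no genuine obstacle beyond stating these position checks carefully.
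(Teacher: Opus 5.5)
Your approach is the paper's own: a direct coordinate check of where $s$ and $g_\ell$ send the rows. Part (1) is correct, and there the appeal to $s^2=1$ legitimately covers the points of row $x_4$.

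\textbf{The gap is in the converse of (2).} Under $g_\ell$ the point $Q(4,k)=(k+\frac32,-3)$ goes to $(k+\ell+\frac32,-1)=Q(1,k+1+\ell)$. So value preservation on row $x_4$ requires $x_{4,k}=x_{1,k+1+\ell}$. This is the condition attached to $g_{-\ell}$, not a reindexing of $x_{4,k}=x_{1,k+1-\ell}$, so your claim that ``the condition is symmetric under this reindexing'' is false. The two identities together say that $x_1$ is $2\ell$-periodic, and position bookkeeping cannot give this. Take an arbitrary array, not a frieze, with $x_2,x_3$ constant, $x_{4,k}=x_{1,k+1-\ell}$ and $x_1$ not $2\ell$-periodic: then $g_\ell$ is not a symmetry. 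A second sign that something is missing: $g_\ell^2$ is the translation by $2\ell$, so the stated conditions must force $p\mid 2\ell$, which positions alone cannot do. The paper's sketch also writes down only the $x_1\to x_4$ matching, so the repair below is needed there too.

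\textbf{The repair uses a frieze relation; the trivalent one suffices.} Substituting the hypothesis gives
$x_{2,j}x_{2,j+1}=1+x_{1,j+1}x_{3,j}x_{4,j}=1+x_{1,j+1}x_{3,j}x_{1,j+1-\ell}$.
Now take the same relation at $j+\ell$. By the $\ell$-invariance of $x_2,x_3$ and $x_{4,j+\ell}=x_{1,j+1}$, it reads
$x_{2,j}x_{2,j+1}=1+x_{1,j+1+\ell}x_{3,j}x_{1,j+1}$.
Cancelling the positive factor $x_{1,j+1}x_{3,j}$ gives $x_{1,j+1-\ell}=x_{1,j+1+\ell}$, hence $x_{4,k}=x_{1,k+1+\ell}$. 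Your position check then closes the converse.

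An equivalent route: $F\circ g_\ell^{-1}$ is again a frieze of type $D_4$, because the equator reflection realises the leg swap $1\leftrightarrow 4$. A $D_4$-frieze is determined by its rows $x_2,x_3,x_4$ through $x_{1,j+1}=(x_{2,j}x_{2,j+1}-1)/(x_{3,j}x_{4,j})$. Your hypotheses say exactly that $F\circ g_\ell^{-1}$ and $F$ agree on these three rows.

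A minor point: for the boundary lines you cite the $D_5$ picture, but there the top and bottom $1$s are offset by $\frac12$ from each other. For $n=4$, it is the half-step alternation continued one line further that places both boundary rows at integer abscissae.
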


\begin{proof}
For $n = 4$ the bifurcation line is the equator, and a boundary-exchange fixes it as a set and interchanges the lines $y = -1$ and $y = -3$, and therefore the rows $x_1$ and $x_4$.

(1) The reflection $s$ in the equator fixes the bifurcation line pointwise and imposes no condition on $x_2$ and $x_3$.
It carries the entry $x_{1,j}$, at $Q(1,j)$, to $Q(4,j-1)$, and thus $s$ is a symmetry exactly when $x_{1,j} = x_{4,j-1}$ for all $j$, that is $x_{4,j} = x_{1,j+1}$.

(2) The glide is the composite of $s$ with the translation by $\ell$.
The matching of the rows $x_1$ and $x_4$ becomes $x_{4,j} = x_{1,j+1-\ell}$, and the translation of the bifurcation line by $\ell$ requires $x_2$ and $x_3$ to be invariant under $j \mapsto j + \ell$.
\end{proof}

A glide whose shift is a multiple of $p$ is the composite of the reflection with a translation of $F$, and adds no symmetry.

\begin{example}
\label{ex:D4-boundary}
Each kind of boundary-exchange of \textup{Proposition~\ref{prop:D4}} is realised.
The first diagonal $(1,2,3,3)$ generates a frieze of period $4$ with rows $x_1 = (1,3,5,2)$, $x_2 = (2,14,9,1)$ and $x_3 = x_4 = (3,5,2,1)$.
Here $x_{4,j} = x_{1,j+1}$ for all $j$, and the reflection in the bifurcation line is therefore a symmetry, the three leg rows $x_1, x_3, x_4$ carrying one and the same sequence up to cyclic shift in accordance with the triality of $D_4$.
The first diagonal $(1,3,2,1)$ generates a frieze of period $2$ with rows $x_1 = (1,4)$, $x_2 = (3,3)$, $x_3 = (2,2)$ and $x_4 = (1,4)$.
Here $x_{4,j} = x_{1,j}$ with $x_2, x_3$ constant, and the glide of shift $\ell = 1$, half the period, is therefore a symmetry, whereas the mirror is not, since $x_{4,j} \neq x_{1,j+1}$.
In particular type $D_4$ admits boundary-exchange symmetries.
\end{example}

We turn to the vertical reflection.
A vertical reflection preserves each horizontal line, and since it preserves the row $x_1$ its axis lies in $\frac12 \Z$, whence on the bifurcation line it preserves the integral and half-integral positions separately, and thus each of the two interleaved rows.
It amounts to the existence of an integer $c$ with
\[
  x_{i,j} = x_{i,\,c-i-j} \quad (1 \le i \le n-2), \qquad
  x_{n-1,j} = x_{n-1,\,c-(n-1)-j}, \quad
  x_{n,j} = x_{n,\,c-(n-1)-j},
\]
for all $j$.
Hence, exactly as for the types of Section~\ref{sec:main-ABCFG}, the frieze admits a vertical reflection if and only if its rows are simultaneously palindromic about a common vertical axis.
The diagram automorphism exchanging the two legs is not a vertical reflection here, for the legs $x_{n-1}$ and $x_n$ sit on different lines.

\begin{proposition}\label{prop:D-occurrence}
Let $F$ be a frieze pattern of type $D_n$.
\begin{enumerate}
\item If $n \ge 5$, the Conway symbol of $F$ is ${*}\infty\infty$ if $F$ admits a vertical reflection, and $\infty\infty$ otherwise.
\item If $n = 4$, the Conway symbol of $F$ is determined by the presence of a vertical reflection, of the reflection in the equator and of a glide along it whose shift is not a multiple of the period, as in the following table.
\[
\renewcommand{\arraystretch}{1.15}
\begin{array}{ccc|c}
 \text{vertical} & \text{equatorial} & \text{glide} & \text{symbol}\\ \hline
 \text{no}  & \text{no}  & \text{no}  & \infty\infty\\
 \text{no}  & \text{yes} & \text{no}  & \infty{*}\\
 \text{yes} & \text{no}  & \text{no}  & {*}\infty\infty\\
 \text{yes} & \text{no}  & \text{yes} & 2{*}\infty\\
 \text{yes} & \text{yes} & \text{no}  & {*}22\infty
\end{array}
\]
In particular the embedded frieze realises exactly these five symbols.
\end{enumerate}
\end{proposition}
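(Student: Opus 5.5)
Part~(1) follows the pattern of Theorem~\ref{thm:main-BCFG}. By Lemma~\ref{lem:D-no-boundary}, for $n \ge 5$ the frieze admits no boundary-exchange symmetry, so Lemma~\ref{lem:boundary-exch-list} leaves only the symbols $\infty\infty$ and ${*}\infty\infty$. These two are separated by the presence of a vertical mirror (Remark~\ref{rmk:7friezes}), and (1) follows at once. The only point to check is that every vertical mirror of the embedded array is a symmetry in the sense just described, with axis in $\frac12\Z$. That holds because the mirror must preserve the row $x_1$, whose entries sit at the points $j+\frac12$.

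For part~(2) I first list the symmetries that can occur. Every isometry of the band either preserves or reverses the vertical direction. The orientation-preserving ones that keep the band upright are translations. The ones that reverse the vertical direction are the boundary-exchanges, which by the proof of Proposition~\ref{prop:D4} fix the equator (the bifurcation line) as a set. Such a map is an equator reflection, a glide along the equator, a half-turn about a point of the equator, or a vertical reflection composed with one of these. So the group is generated by its translations together with possibly a vertical reflection $r$, possibly the equator reflection $s$, and possibly a glide $g$ with shift not a multiple of $p$. I then read off the symbol case by case.

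\begin{itemize}
\item If no boundary-exchange occurs, the symbol is $\infty\infty$ or ${*}\infty\infty$ according as $r$ is absent or present.
\item If $s$ is present, the symbol is $\infty{*}$ or ${*}22\infty$ according to $r$, exactly as in Proposition~\ref{prop:C3-classification}.
\item If a boundary-exchange occurs but $s$ does not, the exchange is a proper glide or a half-turn.
\end{itemize}

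In the last case the main task is to exclude $\infty{\times}$ and $22\infty$, which are exactly the symbols missing from the table. Here I use Proposition~\ref{prop:D4}(2). A glide of shift $\ell$ forces $x_2$ and $x_3$ to be $\ell$-periodic and $x_4$ to be a shift of $x_1$. Since $F$ is determined by its first diagonal through the relations~\eqref{eq:Dn-relations}, I would argue that these conditions force a vertical reflection as well, so that the group is $2{*}\infty$. For the half-turn case I would argue in the same way, since a half-turn composed with a vertical mirror gives a glide. Alternatively I would show directly that any half-turn symmetry forces both $r$ and a glide, so that $22\infty$ never occurs.

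This exclusion step is the main obstacle. It is genuinely arithmetic rather than formal, and if the direct argument fails I would fall back on a finite check. Friezes of type $D_4$ are finite in number (the Fontaine--Plamondon count), and I would list them and read off their symmetries. Finally, realisation of all five symbols follows from explicit examples. Example~\ref{ex:types}-style unitary friezes give $\infty\infty$. Example~\ref{ex:D4-boundary} gives $\infty{*}$ via the first diagonal $(1,2,3,3)$ and $2{*}\infty$ via $(1,3,2,1)$, whose rows are all palindromic. Suitable symmetric first diagonals, found by search, give ${*}\infty\infty$ and ${*}22\infty$.
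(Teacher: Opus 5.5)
Your proposal is essentially the paper's proof. Part (1) is argued identically. For part (2) the paper also reads the symbol off the seven-symbol list, and it excludes $\infty{\times}$ and $22\infty$ not by an arithmetic argument but by your fallback: the complete enumeration of the finitely many friezes of type $D_4$ (Table~\ref{tab:D4}). So the speculative direct argument is unnecessary. The one point the paper makes explicit that you leave implicit is that an equatorial mirror and a proper glide never coexist, since their composite would be a translation by $\ell \equiv \frac{p}{2} \pmod p$; this is why no row of the table has both.
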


\begin{proof}
Part~(1) follows from Lemma~\ref{lem:D-no-boundary}, which removes boundary-exchange for $n \ge 5$, together with Lemma~\ref{lem:boundary-exch-list} and the fact that the two boundary-exchange-free frieze groups $\infty\infty$ and ${*}\infty\infty$ are separated by the vertical reflection.
For $n=4$ the palindromicity criterion above controls the vertical reflection, and Proposition~\ref{prop:D4} gives the conditions for an equatorial mirror and for a glide.
By Remark~\ref{rmk:7friezes} the symbol is determined by which of a vertical reflection, an equatorial mirror, a glide along the equator and a half-turn the symmetry group contains, and Lemma~\ref{lem:boundary-exch-list} identifies the last three as the boundary-exchange symmetries.
A glide with shift $\ell$ squares to the translation by $2\ell$, whence the period $p$ divides $2\ell$ but not $\ell$ and $\ell$ is congruent to $\frac{p}{2}$ modulo $p$.
Hence an equatorial mirror and a glide are never both symmetries, their composite being a translation by half the period.
The possible combinations are therefore the five rows of the table together with the glide-only group $\infty{\times}$ and the rotation-only group $22\infty$, and all five rows occur.
The first diagonals $(1,1,1,1)$, $(1,2,1,1)$ and $(2,3,2,2)$ give $\infty\infty$, ${*}\infty\infty$ and ${*}22\infty$, while the mirror and glide friezes of Example~\ref{ex:D4-boundary} give $\infty{*}$ and $2{*}\infty$.
The friezes of type $D_4$ are finite in number \cite[Proposition~3.2]{FoP}, and a complete enumeration, recorded in Table~\ref{tab:D4}, shows that no other symbol arises.\footnote{The fifteen patterns of Table~\ref{tab:D4} have periods summing to $51$, the number of friezes of type $D_4$ \cite[Theorem~1.1]{FoP}, and therefore exhaust them.}
\end{proof}

\begin{table}[ht]
\centering
\renewcommand{\arraystretch}{1.2}
\begin{tabular}{lcc}
\hline
Conway symbol & first diagonal & period \\
\hline
$\infty\infty$    & $(1,1,1,1)$ & $4$ \\
                  & $(1,1,2,1)$ & $4$ \\
                  & $(1,3,4,2)$ & $4$ \\
                  & $(1,5,3,3)$ & $4$ \\
\hline
${*}\infty\infty$ & $(1,2,1,1)$ & $4$ \\
                  & $(1,2,3,1)$ & $4$ \\
                  & $(1,3,1,2)$ & $2$ \\
                  & $(2,3,1,4)$ & $2$ \\
\hline
$\infty{*}$       & $(1,1,1,2)$ & $4$ \\
                  & $(1,1,2,2)$ & $4$ \\
                  & $(1,2,3,3)$ & $4$ \\
                  & $(1,3,2,4)$ & $4$ \\
\hline
$2{*}\infty$      & $(1,3,2,1)$ & $2$ \\
\hline
${*}22\infty$     & $(1,2,1,3)$ & $4$ \\
                  & $(2,3,2,2)$ & $1$ \\
\hline
\end{tabular}
\caption{The fifteen friezes of type $D_4$ up to translation.}
\label{tab:D4}
\end{table}

\begin{remark}
\label{rmk:x1-insufficient}
For types $B_n$ and $C_n$ the vertical reflection is detected by the single row $x_1$ \textup{(}Lemma~\ref{lem:x1-determines}\textup{)}, and no such reduction is available for type $D_n$.
The frieze of type $D_8$ with first diagonal $(1,1,2,5,8,3,1,4)$, of period $8$, has $x_1 = (x_{1,1}, \dots, x_{1,8}) = (1,2,3,3,2,1,4,4)$ satisfying $x_{1,\,7-j} = x_{1,j}$ for all $j$, and admits no vertical reflection.
The frieze with first diagonal $(1,1,2,5,8,3,2,2)$ of Remark~\ref{rmk:even-n} has the same rows $x_1, \dots, x_6$ and admits a vertical reflection, and thus in type $D_n$ the row $x_1$ does not even determine the frieze.
\end{remark}

\subsection{A geometric model for type \texorpdfstring{$D_n$}{D\_n}}
\label{ssec:geom-Dn}

It remains to read the vertical reflection off a combinatorial model, as the palindromicity of the quiddity sequence does for type $A_n$ and that of $x_1$ for types $B_n$, $C_n$.
The model is the once-punctured $n$-gon $\mathcal{P}_n$ of Schiffler~\cite{Sch} and Baur--Marsh~\cite{BM}, which carries the cluster category $\mathcal{C}_{D_n} = \mathcal{D}^b(kD_n)/\tau^{-1}[1]$ of Buan, Marsh, Reineke, Reiten and Todorov~\cite{BMRRT}, where $k$ is an algebraically closed field.
The description of $\mathcal{C}_{D_n}$ in terms of $\mathcal{P}_n$ recalled in the next two paragraphs and in Lemma~\ref{lem:Sch} is that of \cite[Sections~3 and 4]{Sch}.

From now on, we label the boundary vertices $0, 1, \dots, n-1$ (modulo $n$) counterclockwise.
The indecomposable objects of $\mathcal{C}_{D_n}$ correspond to the $n^2$ tagged edges of $\mathcal{P}_n$, which are of two kinds.
A \emph{peripheral} edge $M_{a,b}$ runs from the boundary vertex $a$ counterclockwise to the boundary vertex $b$ $(\neq a, a+1)$, the arcs $M_{a,b}$ and $M_{b,a}$ passing on opposite sides of the puncture.
A \emph{puncture} edge $M^{\epsilon}_{a,a}$ joins the boundary vertex $a$ to the puncture, un-notched for $\epsilon = 1$ and notched for $\epsilon = -1$.
Two tagged edges are \emph{compatible} when they can be drawn without crossing in the interior of $\mathcal{P}_n$, two puncture edges $M^{\epsilon}_{a,a}$ and $M^{\eta}_{b,b}$ being compatible precisely when either $a = b$, or $a \neq b$ and $\epsilon = \eta$.

We draw the three kinds of tagged edge for $n = 5$.

\begin{center}
\begin{tikzpicture}[scale=1.2, font=\small,
  vtx/.style={circle, fill, inner sep=1.1pt},
  punc/.style={circle, draw, inner sep=1.5pt, line width=0.6pt},
  bdry/.style={gray!65, line width=0.5pt},
  edge/.style={line width=0.9pt}]
  \foreach \i/\ang in {0/90, 1/162, 2/234, 3/306, 4/18}
    \coordinate (a\i) at (\ang:1.35);
  \coordinate (o) at (0,0);
  \coordinate (tg) at ($(o)!0.2!(a4)$);
  \draw[bdry] (a0)--(a1)--(a2)--(a3)--(a4)--cycle;
  \draw[edge] (a0) to[bend left=16] node[pos=0.5, left, font=\scriptsize] {$M_{0,2}$} (a2);
  \draw[edge] (a3) -- node[pos=0.55, right, font=\scriptsize] {$M^{+}_{3,3}$} (o);
  \draw[edge] (a4) -- (o);
  \draw[edge, line width=0.8pt] ($(tg)+(108:3.6pt)$) -- ($(tg)+(288:3.6pt)$);
  \node[font=\scriptsize] at (0.72,0.52) {$M^{-}_{4,4}$};
  \foreach \i in {0,1,2,3,4} \node[vtx] at (a\i) {};
  \node[punc] at (o) {};
  \node[above, font=\scriptsize] at (a0) {$0$};
  \node[left, font=\scriptsize] at (a1) {$1$};
  \node[below left, font=\scriptsize] at (a2) {$2$};
  \node[below right, font=\scriptsize] at (a3) {$3$};
  \node[right, font=\scriptsize] at (a4) {$4$};
\end{tikzpicture}
\end{center}

The translation quiver $\Z D_n$, each vertex carrying a Dynkin node $i$ and a column $j$ with $\tau(i,j) = (i,\,j-1)$, is the Auslander--Reiten quiver of $\mathcal{D}^b(kD_n)$ \cite[I.5]{Ha}, the $n^2$ tagged edges forming a fundamental domain for $\mathcal{C}_{D_n}$.
The array of Subsection~\ref{ssec:Dn} realises $\Z D_n$, the horizontal translation being $\tau$ and the rows being the nodes.
Its entries are the values of the cluster variables of type $D_n$ \textup{(}Subsection~\ref{ssec:FP}\textup{)}, and therefore two positions of $\Z D_n$ carrying the same tagged edge carry equal entries and a frieze of type $D_n$ assigns a positive integer $F(\gamma)$ to each tagged edge $\gamma$.
We call $F(\gamma)$ \emph{the value} of $\gamma$. 
The clusters are \emph{the tagged triangulations}, the maximal sets of pairwise compatible tagged edges, which all have cardinality $n$.
Setting the $n$ variables of a tagged triangulation $C$ equal to $1$ gives the unitary frieze $f_C$.

\begin{lemma}[{\cite[Proposition~3.2, Lemma~3.5]{FoP}, and \cite[Proposition~3.1]{GS}}]
\label{lem:FoP}
Every frieze $F$ of type $D_n$ determines a unique tagged triangulation $T_F$ of $\mathcal{P}_n$ with the following two properties.
\begin{enumerate}
\item[(a)] Every peripheral edge $\gamma$ of $T_F$ has $F(\gamma) = 1$, and $T_F$ contains every un-notched edge $\gamma$ with $F(\gamma) = 1$.
\item[(b)] The puncture edges of $T_F$ are either
  \begin{enumerate}
  \item[(b1)] the pair $M^{+1}_{a,a}$ and $M^{-1}_{a,a}$ at a boundary vertex $a$ with
    $F(M^{\pm 1}_{a,a}) = 1$, or
  \item[(b2)] the un-notched edges $M^{+1}_{a,a}$ at $m$ boundary vertices $a$ with
    $F(M^{+1}_{a,a}) = d$ for an integer $d$ dividing $m$, and
    $F(M^{-1}_{a,a}) = \frac{m}{d}$.
  \end{enumerate}
\end{enumerate}
Moreover $F$ is determined by its restriction to $T_F$.
\end{lemma}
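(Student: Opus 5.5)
The lemma is cited from \cite{FoP, GS}, so the plan is to assemble it from three ingredients rather than build it from scratch: the positivity and finiteness theory of friezes of type $D_n$, the compatibility combinatorics of $\mathcal{P}_n$ recalled above, and the exchange relations attached to puncture edges. We argue in three stages.

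The first stage produces a cluster of $1$s. For every frieze of finite Dynkin type, some entry equals $1$: by periodicity there is a minimal entry, and if it were at least $2$ the unimodular relations \eqref{eq:Dn-relations} would force an entry strictly below it. We then set $S$ to be the set of tagged edges $\gamma$ with $F(\gamma)=1$. Two peripheral or un-notched edges in $S$ must be compatible. To see this, we use the exchange relation for a crossing pair, $F(\gamma)F(\gamma') = F(\alpha)F(\beta)+F(\alpha')F(\beta')$. With positive integer values, this gives $F(\gamma)F(\gamma')\ge 2$, so the two edges cannot both have value $1$. We take $T_F$ to be a maximal compatible set containing all peripheral and un-notched edges of value $1$, which gives (a) once we know this set is large enough.

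The second stage completes $T_F$ and analyses the puncture edges; this is the heart of the matter, and it is where (b1) and (b2) come from. A triangulation of $\mathcal{P}_n$ has either at least two puncture edges, or a pair $M^{\pm1}_{a,a}$ at one vertex. Counting values through the Laurent expansion in a cluster forces the complete cluster to consist of edges with value $1$, except possibly the puncture edges. In case (b1) the pair itself has value $1$. In case (b2) the un-notched puncture edges $M^{+1}_{a,a}$ at the $m$ vertices of $T_F$ all have a common value $d$. To prove this, we apply the exchange relation across a peripheral edge of value $1$ bounding a once-punctured digon, which gives equality of adjacent puncture values. We then use the identity $F(M^{+1}_{a,a})F(M^{-1}_{a,a}) = m$, which comes from the relation expressing the product of the two tags through the peripheral edges around the puncture, all of value $1$. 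Since both factors are integers, $d$ divides $m$ and $F(M^{-1}_{a,a}) = m/d$. The main obstacle is establishing this relation cleanly and the uniqueness of $T_F$ in case (b2). There are two routes. The first is to cite \cite[Lemma~3.5]{FoP}. The second is to verify the relation in the special case where the peripheral edges form a fan around the puncture, and then transport it by mutation invariance.

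The third stage shows that $F$ is determined by its restriction to $T_F$. Since $T_F$ is a cluster, every entry of $F$ is a Laurent polynomial in the values on $T_F$. Hence $F$ is determined by these values, namely $1$ on all edges together with $d$ in case (b2). We will also need uniqueness of $T_F$, and condition (a) forces it: the un-notched edges are pinned down as exactly those of value $1$. The notched edges are then pinned down by the dichotomy between (b1) and (b2), using \cite[Proposition~3.1]{GS}.
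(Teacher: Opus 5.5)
The paper gives no proof of this lemma; it imports it from \cite[Proposition~3.2, Lemma~3.5]{FoP} and \cite[Proposition~3.1]{GS}. As a self-contained reconstruction, your outline has two genuine gaps. The compatibility of value-$1$ edges and the final step, that $F$ is determined by its values on the cluster $T_F$, are fine.

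\textbf{Construction and uniqueness of $T_F$.} Your construction and your uniqueness argument both assume that edges of value $1$ are plentiful, and this is false. The constant frieze of type $D_4$ with first diagonal $(2,3,2,2)$ in Table~\ref{tab:D4} has no entry equal to $1$ (indeed $2\cdot2=1+3$ and $3\cdot3=1+2\cdot2\cdot2$). So the Stage~1 claim that every frieze of finite type has an entry $1$ is wrong. The minimal-entry argument proves nothing in any case, since $xx'=1+bc$ with all entries at least $x\ge2$ is no contradiction.

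For this frieze $T_F$ consists of the four un-notched puncture edges, with $d=m/d=2$. Consequently:
\begin{itemize}
\item your ``maximal compatible set containing all value-$1$ edges'' is an arbitrary tagged triangulation;
\item your Stage~3 claim that the un-notched edges of $T_F$ are exactly those of value $1$ fails whenever $d>1$;
\item even when $1$s are present, an arbitrary maximal extension may add peripheral edges of value $>1$, which violates (a).
\end{itemize}
The missing idea is to start from the set $S$ of \emph{peripheral} edges of value $1$ and prove that every face of $S$ avoiding the puncture is a triangle. This is a type~$A$ fact: the values inside a subpolygon whose sides have value $1$ form a Conway--Coxeter frieze, which has a diagonal of value $1$. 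Your sentence about ``counting values through the Laurent expansion'' does not supply it. Once this is shown, $T_F$ must be $S$ together with puncture edges at the $m\ge2$ vertices of the punctured face. Then (a) forces the peripheral part to be $S$, and (b) forces which puncture edges are added.

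\textbf{The common value $d$.} Take a once-punctured digon with vertices $u,v$ and both sides of value $1$. The exchange relation there reads $F(M^{+1}_{u,u})\,F(M^{-1}_{v,v})=2$. It does not give $F(M^{+1}_{u,u})=F(M^{+1}_{v,v})$, and the values $1$ and $2$ genuinely occur; that is exactly case (b1). For $m\ge3$ no once-punctured digon has both sides of value $1$ at all, since those sides would lie in $S$ and confine the punctured face to two vertices. So your step has no input there.

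What works is a maximum principle. Let $a-1,a,a+1$ be consecutive vertices of the punctured face, put $y_a=F(M^{+1}_{a,a})$, and let $\gamma_a$ be the peripheral edge cutting off the triangle at $a$. Then $y_a\,F(\gamma_a)=y_{a-1}+y_{a+1}$, with $F(\gamma_a)\ge2$ because $\gamma_a\notin S$. At a maximal $y_a$ both neighbours must therefore equal it, so the $y_a$ are constant. For $m=2$ the two digon relations give either $y_u=y_v$ or case (b1).

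Likewise, $F(M^{+1}_{a,a})F(M^{-1}_{a,a})=m$ is not a relation in the sides alone. For $m=2$ one has $F(M^{-1}_{u,u})=2/F(M^{+1}_{v,v})$, which depends on the other puncture value. The identity holds only after the $y_a$ are known to be equal. A numerical identity between values of this kind cannot be ``transported by mutation invariance'' from a fan configuration.
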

We call $(T_F, d)$ \emph{the data} of $F$, taking $d = 1$ in case \textup{(b1)}.
Throughout, $m$ denotes the number of puncture edges of $T_F$, which is $2$ in case \textup{(b1)} and is the $m$ of \textup{(b2)} otherwise.
Conversely, every such pair $(T, d)$, with $T$ a tagged triangulation whose puncture edges are un-notched or form the pair at a single vertex and $d$ a divisor of the number of its puncture edges, the pair admitting $d = 1$ alone, is the data of exactly one frieze, by the count of friezes in \cite[Theorems~1.1 and~3.6]{FoP}, and we shall use only the uniqueness.

\begin{lemma}\label{lem:spokes}
Every tagged triangulation $C$ contains a puncture edge, and its puncture edges either include two with a common tag or consist of the two edges at a single vertex with opposite tags.
\end{lemma}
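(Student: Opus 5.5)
The plan is to argue purely combinatorially on the once-punctured polygon $\mathcal{P}_n$, using only the compatibility rule for puncture edges and maximality of tagged triangulations (all of cardinality $n$). The statement has two parts: existence of a puncture edge, and the dichotomy on their tags.

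\emph{Existence.} I would argue by contradiction. Suppose a tagged triangulation $C$ consists of peripheral edges only. The puncture lies in the interior of exactly one region of the dissection of $\mathcal{P}_n$ cut out by the arcs of $C$. That region is a once-punctured polygon bounded by peripheral arcs and boundary segments, and it has at least one boundary vertex $a$ on its boundary. The untagged edge $M^{+1}_{a,a}$ runs inside this region from $a$ to the puncture. It therefore crosses no arc of $C$, and since $C$ has no puncture edges, no tag condition arises. Hence $M^{+1}_{a,a}$ is compatible with every element of $C$, contradicting maximality. The main point to justify is that a puncture edge drawn inside the punctured region meets no peripheral arc. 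This holds because peripheral arcs are disjoint from the interior of that region.

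\emph{Dichotomy.} Let $S$ be the set of puncture edges of $C$, which is nonempty. Suppose no two edges of $S$ share a tag. Then $S$ contains at most one notched and at most one un-notched edge.

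If $|S| = 2$, say $M^{\epsilon}_{a,a}$ and $M^{-\epsilon}_{b,b}$, the compatibility rule forces $a = b$. These are then the two edges at a single vertex with opposite tags, as claimed.

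It remains to exclude $|S| = 1$, say $S = \{M^{\epsilon}_{a,a}\}$. Here I would show that $M^{-\epsilon}_{a,a}$ is compatible with all of $C$. It is compatible with $M^{\epsilon}_{a,a}$ by the rule, since both sit at the same vertex. For a peripheral arc, compatibility depends only on the underlying untagged curve, which coincides with that of $M^{\epsilon}_{a,a}$. So $M^{-\epsilon}_{a,a}$ crosses a peripheral arc of $C$ exactly when $M^{\epsilon}_{a,a}$ does, which it does not. Maximality then forces $M^{-\epsilon}_{a,a} \in C$, contradicting $|S| = 1$.

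The step I expect to be the obstacle is this last one: making precise that changing the tag does not affect compatibility with peripheral edges. I would state it from the definition of compatibility recalled above, where only puncture–puncture pairs carry a tag condition.
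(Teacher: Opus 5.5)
Your proof is correct and follows essentially the same route as the paper. Both arguments get existence from maximality by drawing a puncture edge inside the punctured region, rule out a lone puncture edge $M^{\epsilon}_{a,a}$ by adjoining $M^{-\epsilon}_{a,a}$, and finish with the compatibility rule for puncture edges. The only difference is that you split into cases on whether two edges share a tag, while the paper splits on whether two edges lie at distinct vertices; this is immaterial.
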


\begin{proof}
If $C$ contained no puncture edge, the puncture would lie in the interior of a region of $\mathcal{P}_n \setminus C$, and a puncture edge drawn inside that region would be compatible with $C$, contradicting the maximality of $C$.
Puncture edges at distinct vertices are compatible only when their tags coincide, while the two puncture edges at one vertex are compatible.
If the puncture edges of $C$ were the single edge $M^{\epsilon}_{a,a}$, then $C \cup \{M^{-\epsilon}_{a,a}\}$ would again be compatible, contradicting the maximality of $C$.
Hence $C$ has at least two puncture edges.
If two of them lie at distinct vertices, their tags coincide.
Otherwise they all lie at one vertex, where they are the pair of opposite tags.
\end{proof}

We record how the tagged edges sit in $\Z D_n$.
\emph{The tag reversal} is the involution of the set of tagged edges given by $M_{a,b} \mapsto M_{a,b}$ and $M^{\epsilon}_{a,a} \mapsto M^{-\epsilon}_{a,a}$.

\begin{lemma}[{\cite{Sch}}]
\label{lem:Sch}
In the identification of tagged edges with the vertices of $\Z D_n$, an edge $M_{a,b}$ occupies the node $|\delta_{a,b}| - 2$, where $|\delta_{a,b}|$ is the number of boundary vertices on the counterclockwise arc from $a$ to $b$ inclusive, while the $n$ tagged edges with first endpoint $a$ occupy a single column.
Moreover the following hold.
\begin{enumerate}
\item At every column the two leg positions carry the two edges $M^{+1}_{a,a}$ and $M^{-1}_{a,a}$ of a single boundary vertex $a$, and the tag reversal exchanges the nodes $n-1$ and $n$ at each column.
\item The translation acts by $\tau M_{a,b} = M_{a',b'}$ and $\tau M^{\epsilon}_{a,a} = M^{-\epsilon}_{a',a'}$, where $a'$ is the clockwise neighbour of $a$.
\item $\tau^{n}$ is the tag reversal for odd $n$ and the identity for even $n$.
\end{enumerate}
In particular the tags carried by each leg node alternate from column to column.
\end{lemma}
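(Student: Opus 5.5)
The plan is to take Schiffler's description of $\mathcal{C}_{D_n}$ as a translation quiver on tagged edges \cite[Sections~3 and~4]{Sch} as the black box. Everything else will then be a direct matching of that quiver with $\Z D_n$. Schiffler's quiver has the following data. Its vertices are the $n^2$ tagged edges. Its arrows are the elementary moves that rotate one endpoint of an edge counterclockwise to the next boundary vertex, keeping the other endpoint fixed. At the puncture, the moves are $M_{a,a-1} \to M^{\pm1}_{a,a}$ and $M^{\pm1}_{a,a}\to M_{a+1,a}$ in the appropriate orientation. Its translation $\tau$ rotates the polygon one step clockwise and reverses the tag of puncture edges. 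I will check that this quiver is isomorphic to $\Z D_n$ with the labelling stated in the lemma, and read off (1)--(3).

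First, fix a boundary vertex $a$ and list the tagged edges with first endpoint $a$. These are the peripheral edges $M_{a,a+2}, M_{a,a+3}, \dots, M_{a,a-1}$, whose arcs contain $|\delta_{a,b}| = 3, 4, \dots, n$ vertices, together with the two puncture edges $M^{\pm1}_{a,a}$. That is $(n-2)+2 = n$ edges, one for each node of $D_n$. Assign $M_{a,b}$ to node $|\delta_{a,b}|-2 \in \{1,\dots,n-2\}$ and the two puncture edges to the legs $n-1$, $n$. Next I check that the arrows match. Inside the column, the moves $M_{a,b}\to M_{a,b+1}$ (or their counterparts between adjacent columns, depending on the orientation convention of Subsection~\ref{ssec:Dn}) join consecutive nodes $i$, $i\pm1$. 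The longest peripheral edge $M_{a,a-1}$, at node $n-2$, is joined to both puncture edges at the appropriate vertex, so node $n-2$ is trivalent as required. The arrows from column $a$ land in the column of the clockwise or counterclockwise neighbour. This gives an isomorphism of the underlying quivers with $\Z D_n$, with columns indexed by the first endpoint. Because the relabelling is forced once one column is fixed, the isomorphism is unique up to the leg swap.

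Second, I compare translations. In $\Z D_n$ we have $\tau(i,j) = (i,j-1)$. On the polygon side, Schiffler's $\tau$ sends $M_{a,b}$ to $M_{a',b'}$, rotating both endpoints clockwise. This preserves $|\delta|$, hence the node, and shifts the column by one, which is (2) for peripheral edges. On puncture edges, $\tau$ flips the tag, giving $\tau M^{\epsilon}_{a,a} = M^{-\epsilon}_{a',a'}$. Statement (1) follows from the column description: the leg positions of column $a$ carry exactly $M^{+1}_{a,a}$ and $M^{-1}_{a,a}$, and tag reversal fixes peripheral edges while swapping the two legs. The closing sentence of the lemma also follows from (2): since $\tau$ preserves each node and flips tags, the tag carried at a fixed leg node alternates from one column to the next. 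For (3), $\tau^n$ rotates the polygon by $n$ steps, which is the identity on vertices, and it flips tags $n$ times. So $\tau^n$ is the tag reversal for $n$ odd and the identity for $n$ even.

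The main obstacle is the first step: pinning down conventions. I must make sure that Schiffler's orientation of the elementary moves and his clockwise/counterclockwise choice for $\tau$ agree with the row and column conventions of Subsection~\ref{ssec:Dn}, in which $x_{i,j}$ sits to the left of $x_{i,j+1}$ and $\tau$ decreases $j$. To settle this, I will compute the mesh at the trivalent node for a single column and compare it with \eqref{eq:Dn-relations}. If the orientations disagree, I will reflect the labelling of boundary vertices, which only exchanges clockwise and counterclockwise and leaves (1)--(3) unchanged.
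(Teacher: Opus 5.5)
Your route is the paper's. The paper gives no argument of its own beyond attributing the lemma to Schiffler's model \cite[Sections~3 and~4]{Sch}. You likewise take Schiffler's translation quiver on tagged edges as the black box, with $\tau$ the clockwise rotation combined with the change of tags at the puncture, and only match it against the arrows read off from \eqref{eq:Dn-relations}.

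One step is wrong as written, however, and it concerns the part of the lemma used later: the leg alternation, on which Lemma~\ref{lem:G2}\,(2) rests. The map from $\Z D_n$ to the $n^2$ tagged edges is a covering, with each edge occupying infinitely many positions; it is not an isomorphism. Nor is it forced up to one global leg swap. The leg vertices $(n-1,j)$ and $(n,j)$ have the same predecessor $(n-2,j)$ and the same successor $(n-2,j+1)$. Likewise $M^{+1}_{a,a}$ and $M^{-1}_{a,a}$ both sit between $M_{a,a-1}$ and $M_{a+1,a}$. So a purely quiver-level matching may swap the legs independently in every column, and for $n=4$ this freedom even mixes node~$1$ with the legs. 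Your first step therefore cannot decide which tag lies on node $n-1$ in a given column.

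What decides it is that the identification is a morphism of translation quivers, intertwining $\tau(i,j)=(i,j-1)$ with Schiffler's $\tau$. This is what the paper means by the array realising $\Z D_n$ with $\tau$ as the horizontal translation. State this explicitly as the defining property of the identification, fix the legs in one column, and propagate by $\tau$. Then your second paragraph goes through: Schiffler's tag change becomes (2) for puncture edges and gives the alternation along each leg row, and (1) and (3) follow as you say.
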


\begin{example}
\label{ex:Dn-frieze}
We locate a tagged triangulation in the array.
The figure shows the tagged triangulation $C$ of $\mathcal{P}_5$ with edges $M_{0,2}$, $M_{0,3}$, $M_{0,4}$, $M^{+}_{0,0}$ and $M^{-}_{0,0}$.
By Lemma~\ref{lem:Sch} the three peripheral edges occupy the nodes $1$, $2$ and $3$ and the two puncture edges the legs $4$ and $5$.
All five share the first endpoint $0$ and therefore lie in a single column.
Setting $C$ equal to $1$ amounts to setting a first diagonal equal to $1$, and $f_{C}$ is the frieze of Example~\ref{ex:D5-embed}, of period $5$, the five edges of $C$ being the entries equal to $1$ on the leftmost diagonal of its picture, one on each of the rows $x_1, \dots, x_5$.
\begin{center}
\begin{tikzpicture}[scale=1.0, baseline=(current bounding box.center), font=\small,
  vtx/.style={circle, fill, inner sep=1.0pt},
  punc/.style={circle, draw, inner sep=1.3pt, line width=0.6pt},
  bdry/.style={gray!65, line width=0.5pt}, edge/.style={line width=0.8pt}]
  \foreach \i/\ang in {0/0, 1/72, 2/144, 3/216, 4/288} \coordinate (v\i) at (\ang:1.5);
  \coordinate (p) at (0,0);
  \draw[bdry] (v0)--(v1)--(v2)--(v3)--(v4)--cycle;
  \draw[edge] (v0) ..controls (0.7,1.5) and (-0.8,1.55).. (v2);
  \draw[edge] (v0) ..controls (0.5,1.15) and (-1.45,0.25).. (v3);
  \draw[edge] (v0) ..controls (0.05,0.8) and (-1.05,-0.45).. (v4);
  \draw[edge] (v0) ..controls (0.75,0.1).. (p);
  \draw[edge] (v0) ..controls (0.75,-0.18).. (p);
  \draw[edge,line width=0.7pt] ($(0.5,-0.12)+(60:4pt)$)--($(0.5,-0.12)+(240:4pt)$);
  \node[font=\scriptsize] at (0.34,0.74){$M_{0,2}$};
  \node[font=\scriptsize] at (-1.13,0.48){$M_{0,3}$};
  \node[font=\scriptsize] at (-0.30,-1.31){$M_{0,4}$};
  \node[font=\scriptsize] at (1.12,-0.5){$M^{\pm}_{0,0}$};
  \foreach \i in {0,1,2,3,4}\node[vtx] at (v\i){};
  \node[punc] at (p){};
  \node[right,font=\scriptsize] at (v0){$0$};
  \node[above,font=\scriptsize] at (v1){$1$};
  \node[above left,font=\scriptsize] at (v2){$2$};
  \node[below left,font=\scriptsize] at (v3){$3$};
  \node[below,font=\scriptsize] at (v4){$4$};
\end{tikzpicture}
\end{center}
Here $x_1 = (1,2,2,2,7)$ is not palindromic, $f_{C}$ admits no vertical reflection and its Conway symbol is $\infty\infty$ by Proposition~\ref{prop:D-occurrence}\,(1).
\end{example}

\begin{lemma}\label{lem:value-one}
Let $F$ be a frieze of type $D_n$ and $b$ a boundary vertex with $F(M^{\epsilon}_{b,b}) = 1$ for some tag $\epsilon$.
Then $b$ carries a puncture edge of $T_F$.
\end{lemma}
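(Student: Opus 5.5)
Fix the frieze $F$, the boundary vertex $b$ and the tag $\epsilon$ with $F(M^{\epsilon}_{b,b}) = 1$. The plan is a case analysis on the shape of the puncture edges of $T_F$ given by Lemma~\ref{lem:FoP}\,(b), showing in each case that $b$ is one of the vertices carrying them.

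Suppose first that $\epsilon = +1$. By Lemma~\ref{lem:FoP}\,(a), $T_F$ contains every un-notched edge of value $1$, so $M^{+1}_{b,b} \in T_F$, and $b$ carries a puncture edge of $T_F$. Nothing more is needed in this case.

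Suppose now that $\epsilon = -1$, so $F(M^{-1}_{b,b}) = 1$. Assume for contradiction that $b$ carries no puncture edge of $T_F$. We split by Lemma~\ref{lem:FoP}\,(b).
\begin{enumerate}
\item[(b1)] The puncture edges of $T_F$ are $M^{\pm1}_{a,a}$ at a single vertex $a \neq b$. Then $M^{-1}_{b,b}$ is compatible with $M^{-1}_{a,a}$ (same tag) but not with $M^{+1}_{a,a}$.
\item[(b2)] The puncture edges of $T_F$ are the un-notched $M^{+1}_{a,a}$ at $m$ vertices $a \neq b$, each with value $d$. Here $M^{-1}_{b,b}$ is incompatible with every one of them.
\end{enumerate}

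In either case, to reach a contradiction from an incompatible pair of value-$1$ edges, I would use the exchange relation in the cluster algebra of type $D_n$. When two incompatible tagged edges $\gamma, \gamma'$ cross once, a relation $x_\gamma x_{\gamma'} = x_\alpha x_\beta + x_{\alpha'} x_{\beta'}$ holds with positive values on the right. This forces values to grow, but it does not immediately forbid both sides from being small. A cleaner route would therefore be to show directly that the set of edges of value $1$ is pairwise compatible. That is plausibly what \cite[Proposition~3.2]{FoP} provides: for a positive integral frieze, the tagged edges with value $1$ are pairwise compatible, since two crossing edges $\gamma, \gamma'$ satisfy $F(\gamma)F(\gamma') \ge 2$, the exchange relation having two positive integer summands. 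I would invoke this and check it for the case of two puncture edges, $M^{-1}_{b,b}$ against $M^{+1}_{a,a}$. Their exchange relation is $x_{M^+_{a,a}}x_{M^-_{b,b}} = x_{M_{a,b}} + x_{M_{b,a}}$ (up to the conventions of \cite{Sch}), and the right side is at least $2$.

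With this, the case analysis finishes as follows.
\begin{enumerate}
\item[(b2)] $F(M^{+1}_{a,a})F(M^{-1}_{b,b}) = d \cdot 1 \ge 2$, so $d \ge 2$. Consider the vertex $a$ itself: $F(M^{-1}_{a,a}) = m/d$. The aim is to derive a contradiction from the bound $d \le m$ together with the relation between $M^{-1}_{b,b}$ and the peripheral edges of $T_F$ separating $b$ from the puncture; these all have value $1$, and $M^{-1}_{b,b}$ crosses them.
\item[(b1)] $M^{-1}_{b,b}$ crosses the edge $M^{+1}_{a,a}$, which has value $1$ by (b1). The exchange relation then gives $1 = 1 \cdot 1 = F(M^{+1}_{a,a})F(M^{-1}_{b,b}) \ge 2$, a contradiction.
\end{enumerate}

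The main obstacle is case (b2) with $\epsilon = -1$, where the crossing edges do not all have value $1$. Here I would compare with the frieze's transform under tag reversal and apply (a) to the reversed frieze, which should place $M^{-1}_{b,b}$ among the notched spokes and force $b$ to be a spoke vertex.
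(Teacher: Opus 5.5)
\textbf{Verdict: there is a genuine gap in case (b2) with $\epsilon=-1$.} Your case $\epsilon=+1$, read directly off Lemma~\ref{lem:FoP}\,(a), is correct and slicker than the paper's uniform contraposition. Your case (b1) with $\epsilon=-1$ is also correct: the edges $M^{-1}_{b,b}$ and $M^{+1}_{a,a}$ cross, both have value $1$, and the exchange relation gives a contradiction. The paper argues (b1) the same way, citing \cite[Lemma~3.3]{FoP} for the pairwise compatibility of value-$1$ edges.

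\textbf{Why your (b2) attempt stalls.} You flag this case as the main obstacle but never close it. The bound $d\ge 2$ and the divisibility of $m$ by $d$ lead nowhere. You also invoke ``the peripheral edges of $T_F$ separating $b$ from the puncture'' without showing that any exist, and establishing that is the actual content of this case.

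\textbf{The missing idea: maximality of $T_F$.}
\begin{enumerate}
\item In case (b2) every puncture edge of $T_F$ is un-notched, so $M^{+1}_{b,b}$ is compatible with all of them.
\item By your standing assumption $M^{+1}_{b,b}\notin T_F$, so maximality of $T_F$ forces it to cross some peripheral edge $\gamma\in T_F$.
\item By Lemma~\ref{lem:FoP}\,(a), $F(\gamma)=1$.
\item Compatibility of a puncture edge with a peripheral edge does not depend on the tag, so $M^{-1}_{b,b}$ crosses $\gamma$ as well.
\item Since $\gamma$ and $M^{-1}_{b,b}$ cross exactly once, your own exchange-relation argument gives $F(\gamma)\,F(M^{-1}_{b,b})\ge 2$, hence $F(M^{-1}_{b,b})\ge 2$, the required contradiction.
\end{enumerate}
Your remark that ``the crossing edges do not all have value $1$'' applies only to the spokes of $T_F$, which have value $d$. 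The edge to use is the peripheral $\gamma$.

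\textbf{The tag-reversal fallback is circular as stated.} Applying (a) to $F\circ t$ puts $M^{+1}_{b,b}$ into $T_{F\circ t}$, not into $T_F$. The identification $T_{F\circ t}=T_F$ is Lemma~\ref{lem:D-equivariance}\,(1), whose proof uses the present lemma. To rescue this route you would have to prove two further facts directly:
\begin{itemize}
\item $T_F$ and $T_{F\circ t}$ have the same peripheral part, namely all peripheral edges of value $1$;
\item in case (b2) the puncture edges of $T_F$ occupy every vertex of the region containing the puncture.
\end{itemize}
The second fact is again the maximality argument above.

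\textbf{Minor point.} Justifying ``value-$1$ edges are pairwise compatible'' by a single exchange relation covers only pairs that cross once. In general one cites \cite[Lemma~3.3]{FoP}, though only once-crossing pairs arise in this lemma.
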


\begin{proof}
We note that two tagged edges of value $1$ are compatible \cite[Lemma~3.3]{FoP}.
We argue by contraposition and let $b$ carry no puncture edge of $T_F$, so that neither $M^{+1}_{b,b}$ nor $M^{-1}_{b,b}$ lies in $T_F$.

In case \textup{(b1)}, the pair lying at a vertex $a$, an edge $M^{\epsilon}_{b,b}$ of value $1$ would be compatible with $M^{-\epsilon}_{a,a}$, also of value $1$, against their opposite tags at distinct vertices, a contradiction.

In case \textup{(b2)}, the edge $M^{+1}_{b,b}$ is compatible with all the puncture edges of $T_F$ and lies outside $T_F$, whence by the maximality of $T_F$ it is incompatible with a peripheral edge $\gamma$ of $T_F$.
Now, $\gamma$ has value $1$ by Lemma~\ref{lem:FoP}\,\textup{(a)}, and therefore $F(M^{+1}_{b,b}) > 1$.
The tag plays no part in the compatibility of a puncture edge with a peripheral one, and thus $M^{-1}_{b,b}$ is incompatible with $\gamma$ as well and $F(M^{-1}_{b,b}) > 1$.
\end{proof}

The dihedral group $\mathrm{Dih}_n = \langle\, \rho,\, s \mid \rho^{n} = s^{2} = 1,\ s\rho s = \rho^{-1} \,\rangle$ of $\mathcal{P}_n$ acts on tagged edges as follows.
\[
  \rho M_{a,b} = M_{a+1,b+1}, \quad
  \rho M^{\epsilon}_{a,a} = M^{\epsilon}_{a+1,a+1},\quad 
  s M_{a,b} = M_{s(b),s(a)}, \quad
  s M^{\epsilon}_{a,a} = M^{\epsilon}_{s(a),s(a)}
\]
The image of a set $T$ of tagged edges is written $sT = \{\, s\gamma \mid \gamma \in T \,\}$.

\begin{lemma}\label{lem:D-equivariance}
Let $F$ be a frieze of type $D_n$ with data $(T_F, d)$, and let $s$ be a reflection of $\mathcal{P}_n$.
\begin{enumerate}
\item On the data the reflection $s$ acts by $(T_F,\, d) \mapsto (sT_F,\, d)$.
The tag reversal acts as the identity in case \textup{(b1)} and by $(T_F,\, d) \mapsto (T_F,\, \frac{m}{d})$ in case \textup{(b2)}.
\item $F$ is invariant under $s$ if and only if $sT_F = T_F$.
\item $F$ is invariant under the composite of $s$ with the tag reversal if and only if $sT_F = T_F$ and either case \textup{(b1)} holds or $d^{2} = m$.
\end{enumerate}
\end{lemma}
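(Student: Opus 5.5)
The plan is to first make precise what ``$F$ is invariant under $s$'' means. Write $sF$ for the frieze defined by $(sF)(\gamma) = F(s\gamma)$, and write $\theta$ for the tag reversal. Since $s$ and $\theta$ preserve compatibility and map tagged triangulations to tagged triangulations, and since the values of a frieze are cluster variables specialised at positive integers, both $sF$ and $\theta F$ are again friezes of type $D_n$. The justification is that $s$ and $\theta$ are induced by automorphisms of the cluster algebra of type $D_n$ that permute cluster variables according to their action on tagged edges: $s$ is a (reflection) automorphism of the marked surface $\mathcal{P}_n$, and $\theta$ is the change of tag at the puncture. For (1) I would then compute the data of $sF$ and $\theta F$ directly from the characterisation in Lemma~\ref{lem:FoP}.

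For $sF$, the peripheral edges of value $1$ of $sF$ are the $s$-images of those of $F$, and un-notched edges go to un-notched edges. The puncture configuration is also transported: a pair at $a$ goes to the pair at $s(a)$, and $m$ un-notched edges of value $d$ go to $m$ un-notched edges of value $d$. By the uniqueness in Lemma~\ref{lem:FoP}, the data of $sF$ is therefore $(sT_F, d)$.

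For $\theta F$, peripheral values are unchanged. In case (b1) the pair at $a$ is $\theta$-stable and both edges have value $1$, so properties (a) and (b1) still hold for $T_F$; the one subtlety is whether some notched edge of $F$ of value $1$ becomes an un-notched edge of $\theta F$ of value $1$ lying outside $T_F$. This is excluded by Lemma~\ref{lem:value-one}. In case (b2) the un-notched puncture edges of $\theta F$ at the same $m$ vertices have value $F(M^{-1}_{a,a}) = m/d$, again a divisor of $m$. The remaining issue is that no other un-notched puncture edge of $\theta F$ has value $1$. Such an edge would be $M^{+1}_{b,b}$ with $F(M^{-1}_{b,b}) = 1$ and $b$ outside the $m$ vertices, which is again excluded by Lemma~\ref{lem:value-one}. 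Hence the data of $\theta F$ is $(T_F, m/d)$ in case (b2). Verifying that property (a) holds verbatim for $\theta F$, with $T_F$ maximal and containing all un-notched edges of value $1$, is the step I expect to need the most care. It rests on Lemma~\ref{lem:value-one} together with the fact that $\theta$ fixes peripheral edges.

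Parts (2) and (3) then follow from the uniqueness clause of Lemma~\ref{lem:FoP}: a frieze is determined by its data, because $F$ is determined by its restriction to $T_F$ and $d$ fixes that restriction. Thus $sF = F$ if and only if $(sT_F, d) = (T_F, d)$, that is, $sT_F = T_F$, which proves (2). For (3), $s\theta F$ has data $(sT_F, d)$ in case (b1) and $(sT_F, m/d)$ in case (b2), using that $s$ and $\theta$ commute on tagged edges. Equality with $(T_F, d)$ therefore means $sT_F = T_F$, together with $m/d = d$, i.e.\ $d^2 = m$, in case (b2). Note that $sT_F$ remains in the same case as $T_F$, since $s$ preserves the configuration of puncture edges, so the case distinction is consistent.
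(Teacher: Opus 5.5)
Your proposal is correct and follows essentially the same route as the paper. You check that $sT_F$ (resp.\ $T_F$) satisfies the two defining properties of Lemma~\ref{lem:FoP} for the transformed frieze, use Lemma~\ref{lem:value-one} to exclude new un-notched puncture edges of value $1$ under the tag reversal, and then obtain (2) and (3) because the data determines the frieze and $s$ commutes with the tag reversal. Your explicit justification that $F\circ s$ and $F\circ\theta$ are again friezes makes precise a point the paper leaves implicit, and since $s$ is an involution your convention $F\circ s$ agrees with the paper's $F\circ s^{-1}$.
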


\begin{proof}
(1) A reflection carries compatible edges to compatible ones.
This implies that it takes a triangulation with the properties of Lemma~\ref{lem:FoP} for $F$ to one with those properties for $F \circ s^{-1}$.
The values of the puncture edges are unchanged, and the uniqueness in Lemma~\ref{lem:FoP} gives $(sT_F,\, d)$ as the data of $F \circ s^{-1}$.

Write $t$ for the tag reversal.
We claim that $T_F$ has the properties of Lemma~\ref{lem:FoP} for $F \circ t$ as well.
Since the tag reversal fixes each peripheral edge $\gamma$ of $T_F$, 
we have
\[ (F \circ t)(\gamma) = F(t\gamma) = F(\gamma) = 1. \]
Condition \textup{(b)} holds for $F \circ t$ in the same case as for $F$.
In case \textup{(b1)} the map $t$ exchanges the two puncture edges at the vertex, both of value $1$.
In case \textup{(b2)} it sends each un-notched puncture edge to its notched partner, of value $\frac{m}{d}$, a divisor of $m$.
There remains the requirement that $T_F$ contain every un-notched edge of value $1$ for $F \circ t$.
The peripheral ones among these have the same value for $F$ and lie in $T_F$ already, and the others are the edges $M^{+1}_{b,b}$ with $F(M^{-1}_{b,b}) = 1$, which by Lemma~\ref{lem:value-one} occur only at vertices carrying a puncture edge of $T_F$ and therefore lie in $T_F$.
Uniqueness gives the data $(T_F,\, 1)$ in case \textup{(b1)} and $(T_F,\, \frac{m}{d})$ in case \textup{(b2)}.

(2) The frieze $F$ is determined by its data.
By \textup{(1)} the data of $F \circ s^{-1}$ is $(sT_F,\, d)$, which is the data of $F$ exactly when $sT_F = T_F$.

(3) The maps $s$ and $t$ commute, $s \circ t$ and $t \circ s$ both sending $M_{a,b}$ to $M_{s(b),s(a)}$ and $M^{\epsilon}_{a,a}$ to $M^{-\epsilon}_{s(a),s(a)}$.
By \textup{(1)} the composite carries $F$ to the frieze with data $(sT_F,\, d)$ in case \textup{(b1)} and $(sT_F,\, \frac{m}{d})$ in case \textup{(b2)}, the case and the number $m$ being unchanged by $s$.
This is the data of $F$ exactly when $sT_F = T_F$ and either case \textup{(b1)} holds or $d^{2} = m$.
\end{proof}

To turn this into a criterion we identify the geometric symmetries of $\mathcal{P}_n$ with transformations of the embedded frieze.

\begin{lemma}\label{lem:G1}
Each rotation and each reflection of $\mathcal{P}_n$ induces a cluster \textup{(}anti\textup{)}automorphism of $\mathcal{C}_{D_n}$ in the sense of \cite{CZ}, rotations acting covariantly on $\Z D_n$ and reflections contravariantly.
\end{lemma}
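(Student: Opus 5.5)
The plan is to work entirely inside Schiffler's model \cite{Sch} of $\mathcal{C}_{D_n}$. In that model the indecomposable objects are the tagged edges of $\mathcal{P}_n$, and the extension space between two indecomposables is nonzero exactly when the corresponding tagged edges are incompatible. A cluster automorphism in the sense of \cite{CZ} is a bijection on indecomposables, equivalently an automorphism of the stable translation quiver, that sends clusters to clusters. The (anti)version reverses the arrows of the Auslander--Reiten quiver, that is, it anticommutes with $\tau$ in the appropriate sense. It therefore suffices to show three things. First, each element of $\mathrm{Dih}_n$ preserves compatibility of tagged edges, and hence sends tagged triangulations to tagged triangulations. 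Second, the induced permutation of the vertices of $\Z D_n$ modulo the fundamental domain is a quiver automorphism for rotations and a quiver anti-automorphism for reflections. Third, this permutation commutes with $\tau$ for rotations and conjugates $\tau$ to $\tau^{-1}$ for reflections.

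Step one is immediate from the definitions. A homeomorphism of $\mathcal{P}_n$ fixing the puncture carries non-crossing arcs to non-crossing arcs. The formulas for $\rho$ and $s$ also preserve tags, so the tag rule for two puncture edges ($a=b$, or $\epsilon=\eta$) is preserved.

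Step two uses Lemma~\ref{lem:Sch}. Rotation satisfies $\rho M_{a,b}=M_{a+1,b+1}$, which preserves the arc length $|\delta_{a,b}|$ and hence the node. For puncture edges, the column of first endpoint $a$ goes to the column of $a+1$. Comparing with Lemma~\ref{lem:Sch}\,(2), $\rho$ agrees with $\tau^{-1}$ on peripheral edges. On puncture edges it agrees with $\tau^{-1}$ composed with the tag reversal, and that tag reversal swaps the two leg nodes in each column, which is itself a diagram automorphism of $\Z D_n$. So $\rho$ acts on $\Z D_n$ as $\tau^{-1}$ followed by the leg swap, a translation-quiver automorphism, hence covariant.

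For a reflection $s$, we have $sM_{a,b}=M_{s(b),s(a)}$. The edge keeps its arc length, so its node is unchanged, but its first endpoint becomes $s(b)$ instead of $a$. Within a node $i$, the edges $M_{a,b}$ with $|\delta_{a,b}|=i+2$ are indexed by $a$, and $b=a+i+1$. The image has first endpoint $s(a+i+1)=s(a)-i-1$. In column coordinates this is $j\mapsto c-j-i$ on row $i$, up to the leg conventions. This is exactly the vertical-reflection pattern already recorded in the paper ($x_{i,j}=x_{i,c-i-j}$). Such a map reverses the arrows $(i,j)\to(i+1,j)$ and $(i+1,j)\to(i,j+1)$ of $\Z D_n$ and satisfies $s\tau s^{-1}=\tau^{-1}$, so it is a translation-quiver anti-automorphism.

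The main obstacle is the bookkeeping on the leg nodes. Under $s$, a puncture edge $M^{\epsilon}_{a,a}$ keeps its tag, while Lemma~\ref{lem:Sch}\,(2) says the tags on each leg node alternate from column to column. One must check that the column reversal $j\mapsto c'-j$ on the bifurcation rows still sends leg node to leg node consistently, possibly composed with the leg swap. The parity of $n$ enters through Lemma~\ref{lem:Sch}\,(3). I would handle this by fixing the column of $M^{+}_{0,0}$ on node $n-1$ and tracking $\tau^k$ explicitly. I expect that for each parity the reflection composed with at most one tag reversal gives a well-defined anti-automorphism on the whole of $\Z D_n$. Either way the tag reversal is a cluster automorphism, since it preserves compatibility, so the conclusion is unaffected.

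Finally, by \cite{CZ}, a quiver (anti)automorphism of the AR quiver preserving the compatibility relation induces a cluster (anti)automorphism of $\mathcal{C}_{D_n}$, which completes the lemma.
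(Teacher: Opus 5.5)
Your proposal is correct in outline and takes a more computational route than the paper.

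\textbf{The paper's route.} The paper argues in two sentences. Schiffler's crossing number equals $\dim\mathrm{Ext}^1$, so any homeomorphism of $\mathcal{P}_n$ preserves compatibility and clusters. The arrows are then handled by orientation alone: an orientation-preserving map carries arrows to arrows, and an orientation-reversing one reverses them. Implicitly this uses that Schiffler's irreducible morphisms are elementary moves pivoting an endpoint in a fixed rotational direction. Peripheral and puncture edges are treated uniformly, without coordinates.

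\textbf{Your route.} You work in the coordinates of Lemma~\ref{lem:Sch}. You obtain $\rho=t\circ\tau^{-1}$, and on the peripheral rows $s\colon(i,j)\mapsto(i,c-i-j)$. You then check arrow reversal and $s\tau s^{-1}=\tau^{-1}$ there by hand. This gives more than the lemma asks for: it produces the explicit action that the paper only extracts afterwards, in the proof of Lemma~\ref{lem:G2}, and there by invoking Lemma~\ref{lem:G1}. It also makes explicit the $\tau$-compatibility that the paper folds into the word ``contravariant''.

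\textbf{The leg nodes.} You leave the leg nodes as an expectation, and this step must be closed. It is genuinely needed, because arrow reversal alone does not control $\tau$ at the legs. Swapping the two leg positions in a single column is a quiver automorphism of $\Z D_n$ that does not commute with $\tau$.

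The check is short, and it needs neither a parity argument nor an extra tag reversal.
\begin{itemize}
\item In $\Z D_n$ both leg positions of column $a$ have the single predecessor $M_{a,a-1}$ (node $n-2$, column $a$) and the single successor $M_{a+1,a}$ (node $n-2$, column $a+1$).
\item Since $s(a\pm1)=s(a)\mp1$, the reflection sends the arrows $M_{a,a-1}\to M^{\epsilon}_{a,a}\to M_{a+1,a}$ to $M_{s(a)+1,s(a)}\leftarrow M^{\epsilon}_{s(a),s(a)}\leftarrow M_{s(a),s(a)-1}$. These are arrows of the quiver, reversed.
\item From $\tau M^{\epsilon}_{a,a}=M^{-\epsilon}_{a-1,a-1}$, and since $s$ preserves tags, you get $s\tau M^{\epsilon}_{a,a}=M^{-\epsilon}_{s(a)+1,s(a)+1}=\tau^{-1}sM^{\epsilon}_{a,a}$.
\end{itemize}
Hence $s$ itself is a translation-quiver anti-automorphism for every $n$, and your hedge about composing with $t$ is unnecessary.

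The parity phenomenon you anticipate is real, but it belongs to Lemma~\ref{lem:G2}, not here. It decides on which leg node of a fixed column of the embedded frieze an image lands, that is, whether $s$ is a vertical reflection of the embedding. It does not affect whether $s$ is an anti-automorphism.
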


\begin{proof}
The crossing number of two tagged edges equals the $k$-dimension of $\mathrm{Ext}^1$ between the corresponding objects of $\mathcal{C}_{D_n}$ \cite[Theorem~5.3]{Sch}, while the arrows of $\Z D_n$ are the irreducible morphisms.
A homeomorphism of $\mathcal{P}_n$ preserves crossing numbers.
This implies that it preserves compatibility and carries clusters to clusters.
A rotation is orientation-preserving, and therefore carries arrows to arrows and acts covariantly on $\Z D_n$.
A reflection is orientation-reversing, and thus reverses every arrow and acts contravariantly.
\end{proof}

\begin{lemma}\label{lem:G2}
Let $n\ge5$ and let $s$ be a reflection of $\mathcal{P}_n$.
\begin{enumerate}
\item If the axis of $s$ passes through a boundary vertex, then $s$ fixes every node and is, under the embedding, a vertical reflection of the embedded frieze.
\item If $n$ is even and the axis of $s$ joins two edge midpoints, then $s$ transposes the two legs and is therefore not a vertical reflection, whereas the composite of $s$ with the tag reversal fixes every node and is a vertical reflection.
\end{enumerate}
For odd $n$ every axis passes through a vertex, and case \textup{(1)} applies to all reflections.
\end{lemma}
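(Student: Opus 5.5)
The plan is to compute the action of $s$ on the nodes of $\Z D_n$ directly from Lemma~\ref{lem:Sch}, and then use Lemma~\ref{lem:G1} to read off the induced map on the embedded array.

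\textbf{Peripheral edges.} Write $s(a)=c-a$ for a fixed $c$. Then $s M_{a,b}=M_{c-b,\,c-a}$. The counterclockwise arc from $c-b$ to $c-a$ is the mirror image of the arc from $a$ to $b$, so it has the same number of boundary vertices. Hence $|\delta_{s(b),s(a)}|=|\delta_{a,b}|$, and $s$ preserves every node $1,\dots,n-2$.

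\textbf{Leg nodes.} The legs need more care. By Lemma~\ref{lem:Sch}\,(1), at each column the legs carry $M^{+1}_{a,a}$ and $M^{-1}_{a,a}$ for a single vertex $a$. By~(2), the tag on a given leg node alternates as $a$ moves one step clockwise. Fix an identification in which the column with first endpoint $a$ has, say, $M^{+}_{a,a}$ on node $n-1$ exactly when $a$ has a prescribed parity. For even $n$ this parity assignment is consistent around the polygon by Lemma~\ref{lem:Sch}\,(3). For odd $n$ it is consistent up to the tag reversal $\tau^n$, which I would handle by working in $\Z D_n$ rather than on one fundamental domain.

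The reflection $s$ sends $M^{\epsilon}_{a,a}$ to $M^{\epsilon}_{c-a,c-a}$, keeping the tag. So $s$ fixes the leg nodes exactly when $a$ and $c-a$ have the same parity, that is, when $c$ is even in the appropriate sense. When the parity flips, $s$ transposes the two legs.

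\textbf{Sorting the axes.} An axis through a boundary vertex corresponds to $c=2a_0$, which is even. For even $n$, an axis through two edge midpoints corresponds to $c$ odd. For odd $n$, every $c$ can be written as $2a_0$ modulo $n$, so every axis passes through a vertex. One must check that the residual $\tau^n$ tag reversal does not spoil this; since $\tau^n$ is the identity on the array, I expect this to be harmless.

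\textbf{From node action to geometry.} Assume now that $s$, or $s$ composed with tag reversal in the midpoint case, fixes every node. By Lemma~\ref{lem:G1} it is a contravariant automorphism of $\Z D_n$, since reversing arrows gives $\tau\mapsto\tau^{-1}$. A node-preserving map with $\tau\mapsto\tau^{-1}$ must act as $(i,j)\mapsto(i,\,c'-j-\kappa_i)$. Requiring the arrows $(i,j)\to(i+1,\cdot)$ to be reversed fixes the offsets $\kappa_i$ to match the embedding $Q$. This yields exactly the formula $x_{i,j}\mapsto x_{i,c-i-j}$ (with $c-(n-1)-j$ on the legs) displayed before Proposition~\ref{prop:D-occurrence}, which is a reflection in a vertical line.

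If instead $s$ transposes the two legs, it sends the leg rows, which sit on distinct horizontal lines, onto each other. It is therefore not a row-preserving isometry, and so not a vertical reflection. Composing with the tag reversal, which swaps the legs by Lemma~\ref{lem:Sch}\,(1), restores node-preservation, and the argument above then applies.

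\textbf{Main obstacle.} The hard step is the bookkeeping of the leg parity, specifically showing that the parity of $c$ decides whether the legs are swapped, using the alternation in Lemma~\ref{lem:Sch}. I would verify it by checking the case $n=6$ explicitly, then argue generally via $\tau$-equivariance.
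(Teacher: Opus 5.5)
Apart from one point, your route is the paper's. Lemma~\ref{lem:G1} gives contravariance and $\tau\mapsto\tau^{-1}$. Arrow reversal then fixes the column offsets and yields the reflection in $x=\frac{c}{2}$. Deciding the legs by tag together with column parity is exactly how the paper settles part~(2). Your direct arc count for the nodes $1,\dots,n-2$ is a harmless substitute for the paper's appeal to the automorphisms of the $D_n$ diagram. For even $n$ your argument is complete.

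The gap is part~(1) for odd $n$, which by the last sentence of the lemma covers every reflection in that case. Your criterion ``$a$ and $c-a$ have the same parity'' has no meaning modulo an odd $n$. Nor does the leg node of a puncture edge. By Lemma~\ref{lem:Sch}\,(3), $\tau^{n}$ is the tag reversal, so $M^{+1}_{a,a}$ lies on node $n-1$ in one column and on node $n$ in the column $n$ steps away. Translation by $n$ columns therefore interchanges the two leg rows, $x_{n,j}=x_{n-1,j+n}$. Your reassurance that ``$\tau^{n}$ is the identity on the array'' is thus false; it contradicts the very lemma you cite a line earlier. Your planned check at $n=6$ never meets the problem. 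What is actually true is that, for odd $n$, the map induced by $s$ on $\Z D_n$ is determined only up to this leg-swapping identification, so you must exhibit a node-fixing representative.

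The paper does this through the vertex $a_0$ on the axis. The reflection $s$ fixes both $M^{\pm1}_{a_0,a_0}$, which are the two leg entries of a single column by Lemma~\ref{lem:Sch}\,(1). The induced map can be taken to fix that column, and since tags are preserved, each leg is fixed. In your own language: in $\Z D_n$ the leg node of a position is determined by its tag and the parity of its column, for every $n$. Two representatives of $s$ differ by a shift of $n$ in the constant $c'$ of the leg column map $j\mapsto c'-j$, together with a swap of the legs. Because $n$ is odd, a representative with $c'$ even exists, and your tag-and-parity rule then shows that it fixes each leg.
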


\begin{proof}
By Lemma~\ref{lem:G1} the reflection $s$ acts contravariantly on $\Z D_n$.
Thus $s$ conjugates $\tau$ into $\tau^{-1}$ and permutes the nodes by a diagram automorphism $\pi$.
For $n \ge 5$ the diagram $D_n$ has only the identity and the transposition of the two legs as automorphisms.
Write the action of $s$ on $\Z D_n$ as $(i,j) \mapsto (\pi(i),\, f_i(j))$.
Since $\tau(i,j) = (i,\, j-1)$ and $s\tau s^{-1} = \tau^{-1}$, we have
\[
  \bigl(\pi(i),\, f_i(j-1)\bigr) = s\tau(i,j) = \tau^{-1}s(i,j)
  = \bigl(\pi(i),\, f_i(j)+1\bigr),
\]
whence $f_i(j-1) = f_i(j) + 1$.
Hence $f_i(j) + j$ does not depend on $j$, and we write $C_i$ for its value.
By the relations~\eqref{eq:Dn-relations} an arrow raising the node preserves the column, and an arrow lowering it raises the column by one.
Since $s$ is contravariant, the image under $s$ of the arrow $(i-1,j) \to (i,j)$ is the arrow $(\pi(i),\, f_i(j)) \to (\pi(i-1),\, f_{i-1}(j))$, and as $\pi(i-1) < \pi(i)$ we have $f_{i-1}(j) = f_i(j) + 1$, that is $C_{i-1} = C_i + 1$ for $2 \le i \le n-2$.
The arrows $(n-2,j) \to (n-1,j)$ and $(n-2,j) \to (n,j)$ give $C_{n-2} = C_{n-1} + 1 = C_n + 1$.
With $c = C_1 + 1$ the action of $s$ is therefore
\begin{equation}\label{eq:s-action}
  s \colon (i,j) \longmapsto
  \begin{cases}
    (\pi(i),\, c-i-j) & \text{if $1 \le i \le n-2$}, \\
    (\pi(i),\, c-(n-1)-j) & \text{if $i = n-1,\, n$}.
  \end{cases}
\end{equation}
A direct computation gives
\[
  Q(i,j) + Q\bigl(s(i,j)\bigr) =
  \begin{cases}
    \bigl(c,\ -2i\bigr) & (1 \le i \le n-2), \\
    \bigl(c,\ -2(n-2)\bigr) & (i = n-1,\ \pi = \mathrm{id}), \\
    \bigl(c,\ -2(n-1)\bigr) & (i = n,\ \pi = \mathrm{id}), \\
    \bigl(c,\ -(2n-3)\bigr) & (i = n-1,\, n,\ \pi \neq \mathrm{id}).
  \end{cases}
\]
The first coordinate is $c$ in every case and does not depend on $\pi$.
The second coordinates of $Q(i,j)$ and $Q(s(i,j))$ coincide exactly when $\pi = \mathrm{id}$.
For $\pi = \mathrm{id}$ the map $Q(i,j) \mapsto Q(s(i,j))$ is $(x,y) \mapsto (c-x,\, y)$, the reflection in the line $x = \frac{c}{2}$.
For the transposition of the legs the two points lie on the distinct lines $y = -(n-2)$ and $y = -(n-1)$, and this map is not a vertical reflection.
It suffices to determine $\pi$. 

(1) Suppose the axis passes through the vertex $a_0$.
As a reflection preserves the tag, $s$ fixes each of the two edges $M^{\pm1}_{a_0,a_0}$.
These occupy the two leg positions of a single column by Lemma~\ref{lem:Sch}\,\textup{(1)}, and $\pi$ fixes each leg and $\pi = \mathrm{id}$.

(2) Suppose $n$ is even and the axis joins two edge midpoints.
Then $s$ exchanges two adjacent boundary vertices, whose puncture edges occupy two adjacent columns $j_0$ and $j_0-1$ by Lemma~\ref{lem:Sch}\,\textup{(1)} and \textup{(2)}.
The action~\eqref{eq:s-action} on the legs gives 
\[ c-(n-1)-j_0 = j_0-1, \] 
that is, $c = 2j_0+n-2$.
As $n$ is even, $c$ is even and $c-(n-1)$ is odd.
The parity of the column of every puncture edge is reversed.
It follows from Lemma~\ref{lem:Sch} that a puncture edge of tag $\epsilon$ lies on the leg node determined by $\epsilon$ and the parity of its column.
Its image carries the same tag and the opposite column parity, and lies on the other leg.
Therefore $\pi$ is the transposition of the legs.
The tag reversal exchanges the leg positions at every column by Lemma~\ref{lem:Sch}\,\textup{(1)}, and the composite of $s$ with it has $\pi = \mathrm{id}$ and is a vertical reflection.
\end{proof}

\begin{lemma}\label{lem:G3}
For $n\ge5$ the cluster anti-automorphisms of $\mathcal{C}_{D_n}$ are the $n$ reflections of $\mathcal{P}_n$, each taken with or without the tag reversal, and every vertical reflection of the embedded frieze is induced by one of these $2n$ elements.
\end{lemma}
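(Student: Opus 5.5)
The plan has two parts: classify the cluster anti-automorphisms of $\mathcal{C}_{D_n}$ for $n \ge 5$, then show that every vertical reflection of the embedded frieze is induced by one of them.

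\emph{Classifying the anti-automorphisms.} By Lemma~\ref{lem:G1} each reflection $s$ of $\mathcal{P}_n$, and hence each composite $s\circ t$ with the tag reversal $t$, is a cluster anti-automorphism. For the converse I would invoke the description of cluster automorphisms of $\mathcal{C}_{D_n}$ in \cite{CZ}. For $n \ge 5$ the direct automorphisms form the group generated by the rotation $\rho$ and the tag reversal $t$, which is $\mathrm{Dih}_n\times\Z/2$ modulo the relations of Lemma~\ref{lem:Sch}; this group is abelian of order $2n$. Since the direct automorphisms have index $2$ in all cluster automorphisms, the anti-automorphisms form a single coset $s\cdot\langle\rho,t\rangle$ for any fixed reflection $s$. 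That coset has exactly $2n$ elements. These are the $n$ reflections $s\rho^k$, each with or without $t$, and they are pairwise distinct because $t$ is not induced by any homeomorphism on the peripheral edges (it fixes them all). If the citation to \cite{CZ} turns out not to be precise enough, I would fall back on a direct argument. An anti-automorphism $\phi$ composed with a fixed reflection is an automorphism, i.e.\ an element of $\langle \rho, t\rangle$. To see this last point, I would observe that an automorphism acts covariantly on $\Z D_n$ as a translation quiver, so it is $\tau^k$ possibly composed with the leg swap. Then Lemma~\ref{lem:Sch}(2),(3) identifies $\tau$ with $\rho^{-1}$ up to the tag reversal, and the leg swap with $t$.

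\emph{Vertical reflections are induced.} Let $r$ be a vertical reflection of the embedded frieze, axis $x=c/2$. As computed before Proposition~\ref{prop:D-occurrence}, $r$ acts on positions by~\eqref{eq:s-action} with $\pi=\mathrm{id}$. This map reverses every arrow of $\Z D_n$, and it commutes with the mesh structure because the relations \eqref{eq:Dn-relations} are symmetric in the two side vertices. Hence $r$ is an anti-automorphism of the translation quiver $\Z D_n$. I would then check that it descends to $\mathcal{C}_{D_n}=\Z D_n/\langle\tau^{-1}[1]\rangle$, i.e.\ that it normalises the group generated by the shift functor $F=\tau^{-1}[1]$. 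Concretely, $F$ acts on $\Z D_n$ by a translation in the column direction composed with the leg swap for odd $n$; conjugating by $r$ inverts the translation part and fixes the leg swap, so $rFr^{-1}=F^{-1}$ and the subgroup is preserved. Once $r$ descends, it is a cluster anti-automorphism, because compatibility is $\dim\mathrm{Ext}^1=0$ and this is read off the mesh category (Hom-hammocks), which $r$ preserves up to duality. By the first part, $r$ then coincides on objects with some $s$ or $s\circ t$.

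\emph{Main obstacle.} The delicate step is the descent to the cluster category: one has to match parities between the action of $F$ on the legs and the axis parameter $c$. I would handle it by a direct computation with Lemma~\ref{lem:Sch}(3), splitting into the cases $n$ odd and $n$ even. A secondary point is that we only need $r$ and the geometric element to agree on tagged edges, not as functors, since the frieze values depend only on the tagged edge.
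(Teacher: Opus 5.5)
Your proposal is correct and follows essentially the paper's route. You take the count from \cite{CZ} for $D_n$ with $n\ge5$, check faithfulness to show that the reflections of $\mathcal{P}_n$ with and without the tag reversal exhaust the anti-automorphisms, and observe that a vertical reflection reverses every arrow of $\Z D_n$ and is therefore one of these $2n$ elements. You organise the middle step as the coset $s\langle\rho,t\rangle$ of the index-two subgroup $\langle\rho,t\rangle\cong\Z_n\times\Z_2$, where the paper speaks of a faithfully realised subgroup of order $4n$; either way the needed input from \cite{CZ} is the order $2n$ (respectively $4n$). Your check that the vertical reflection normalises $\langle\tau^{-1}[1]\rangle$ and preserves $\dim\mathrm{Ext}^1$ spells out detail the paper compresses into one sentence. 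Your fallback, applied directly to the vertical reflection composed with a fixed reflection of $\mathcal{P}_n$ (giving a covariant automorphism $\tau^k$, possibly times the leg swap), would even avoid identifying it as a cluster anti-automorphism in the sense of \cite{CZ}.
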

\begin{proof}
A cluster automorphism in the sense of \cite{CZ} is permitted to reverse the quiver, and the quiver-reversing ones are precisely the contravariant anti-automorphisms of Lemma~\ref{lem:G1}.
By the classification of Chang--Zhu the group of cluster \textup{(}anti\textup{)}automorphisms of type $D_n$ is $\mathrm{Dih}_n\times\Z_2$ of order $4n$ for $n\ge5$ \cite[Table~2]{CZ}.

The rotations and reflections of $\mathcal{P}_n$ together with the tag reversal, the latter induced by the auto-equivalence exchanging the two leg nodes \cite[Proposition~4.1\,(4)]{Sch}, realise through Lemma~\ref{lem:G1} a subgroup of order $4n$.
Indeed a rotation or a reflection preserves the tag of every edge while the tag reversal fixes every peripheral edge and moves every puncture edge.
This implies that an element acting trivially on tagged edges has trivial tag component, and a non-trivial rotation carries $M_{a,a+2}$ to a different edge while a reflection fixing every peripheral edge would satisfy $s(a) = b$ for every pair $a$, $b$ with $b \neq a, a+1$, which is impossible for $n \ge 4$.
Hence they exhaust the group, and the anti-automorphisms are the $2n$ elements.
A vertical reflection of the embedded frieze reverses every arrow of $\Z D_n$, and is therefore one of the $2n$ anti-automorphisms.
It fixes every node and conjugates $\tau$ into $\tau^{-1}$.
By Lemma~\ref{lem:G2} the elements acting in this way are the reflections through a vertex and, for even $n$, the edge-axis reflections composed with the tag reversal.
A vertex-axis reflection composed with the tag reversal exchanges the two legs by Lemma~\ref{lem:Sch}\,\textup{(1)}, and does not act in this way.
Hence every vertical reflection of the embedded frieze arises from one of the $2n$ elements.
\end{proof}

\begin{proposition}\label{prop:D-geometric}
Let $n \ge 5$ and let $F$ be a frieze pattern of type $D_n$, with data $(T_F, d)$ and $m$ puncture edges.
Then $F$ admits a vertical reflection if and only if one of the following holds.
\begin{enumerate}
\item $T_F$ is invariant under a reflection of $\mathcal{P}_n$ whose axis passes through a boundary vertex.
\item $n$ is even, $d^{2} = m$, and $T_F$ is invariant under a reflection whose axis joins two edge midpoints.
\end{enumerate}
The Conway symbol of $F$ is ${*}\infty\infty$ in that case and $\infty\infty$ otherwise.
For odd $n$ the second condition is vacuous, and the first is the bilateral symmetry of $T_F$.
\end{proposition}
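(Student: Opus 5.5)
The plan is to assemble the proposition from Lemmas~\ref{lem:G2}, \ref{lem:G3} and~\ref{lem:D-equivariance}, and then read off the Conway symbol from Proposition~\ref{prop:D-occurrence}\,(1).

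\emph{Sufficiency.}
Suppose (1) holds, with $s$ a vertex-axis reflection satisfying $sT_F=T_F$.
Lemma~\ref{lem:D-equivariance}\,(2) gives $F\circ s^{-1}=F$, as functions on tagged edges.
By Lemma~\ref{lem:G2}\,(1), $s$ acts on the embedded array as the reflection in a vertical line $x=\frac{c}{2}$.
Invariance of the value function under $s$ therefore says that this planar reflection is value-preserving, hence a vertical reflection of $F$.
Now suppose (2) holds: $n$ is even, $d^2=m$, and $s$ is an edge-axis reflection with $sT_F=T_F$.
Then case~(b2) applies, unless $m=2$ in case~(b1), where Lemma~\ref{lem:D-equivariance}\,(3) covers it anyway.
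Lemma~\ref{lem:D-equivariance}\,(3) shows that $F$ is invariant under $s$ composed with the tag reversal.
By Lemma~\ref{lem:G2}\,(2), this composite fixes every node and realises a vertical reflection of the embedded frieze.

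\emph{Necessity.}
Suppose $F$ admits a vertical reflection $r$.
By Lemma~\ref{lem:G3}, $r$ is induced by one of the $2n$ anti-automorphisms.
The proof of that lemma shows that only two kinds fix every node: the vertex-axis reflections, and, for even $n$, the edge-axis reflections composed with the tag reversal.
A vertical reflection of the array is determined by its axis $x=\frac{c}{2}$, through formula~\eqref{eq:s-action} with $\pi=\mathrm{id}$.
Hence $r$ coincides, as a map on positions of $\Z D_n$ and so on tagged edges, with one of these maps $\phi$.
Value-preservation of $r$ means $F\circ\phi=F$.
If $\phi$ is a vertex-axis reflection, Lemma~\ref{lem:D-equivariance}\,(2) yields (1).
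If $\phi$ is an edge-axis reflection composed with the tag reversal, Lemma~\ref{lem:D-equivariance}\,(3) yields $sT_F=T_F$, together with either case~(b1) or $d^2=m$.

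The point that needs care is case~(b1) in this last alternative.
There $m=2$ and $d=1$, so $d^2\neq m$, yet Lemma~\ref{lem:D-equivariance}\,(3) still allows the symmetry.
I would resolve it by showing that (b1) combined with $sT_F=T_F$ for an edge-axis $s$ is impossible.
In case~(b1) the puncture edges sit at a single vertex $a$, and $sT_F=T_F$ forces $s(a)=a$.
An edge-axis reflection fixes no boundary vertex, so this is a contradiction.
Thus (b1) cannot occur alongside such an $s$, and condition~(2) as stated with $d^2=m$ is correct.
The same argument justifies the sufficiency direction, where (b1) never arises.

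\emph{Conway symbol and odd $n$.}
The symbol follows from Proposition~\ref{prop:D-occurrence}\,(1): it is ${*}\infty\infty$ exactly when a vertical reflection exists, and $\infty\infty$ otherwise.
For odd $n$, every reflection axis passes through a vertex, as noted in Lemma~\ref{lem:G2}.
So condition~(2) is vacuous, and condition~(1) says precisely that $T_F$ is bilateral.

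The main obstacle I expect is the identification step in necessity.
There one must justify that the planar vertical reflection, which is a priori only an isometry of the point set, is the same as the map on tagged edges induced by $\phi$.
I would argue this directly from the explicit position formula~\eqref{eq:s-action}, rather than leaning on the Chang--Zhu classification.
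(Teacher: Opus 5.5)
Your proposal is the paper's proof. Both directions go through Lemmas~\ref{lem:G2}, \ref{lem:G3} and \ref{lem:D-equivariance}\,(2)--(3), case~(b1) is excluded for an edge-axis reflection exactly as in the paper (such a reflection would have to fix the vertex carrying the pair of puncture edges), and the symbol is read off from Proposition~\ref{prop:D-occurrence}\,(1).

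One minor point you inherit from the paper concerns odd $n$. There $\tau^{n}$ is the tag reversal, so a vertex-axis reflection composed with the tag reversal also has a node-fixing lift to $\Z D_n$, namely the vertical reflection with axis shifted by $\frac{n}{2}$, and the list of node-fixing elements quoted from Lemma~\ref{lem:G3} is incomplete in that case. This is harmless, since Lemma~\ref{lem:D-equivariance}\,(3) still gives $sT_F = T_F$ and hence condition~(1).
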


\begin{proof}
By Proposition~\ref{prop:D-occurrence}\,(1) it suffices to identify the existence of a vertical reflection with the stated condition.
By Lemma~\ref{lem:G3} a vertical reflection of $F$ is induced by a reflection $s$ of $\mathcal{P}_n$ taken with or without the tag reversal, and by Lemma~\ref{lem:G2} the elements acting node-preservingly are the reflections through a vertex and, for even $n$, the composites of an edge-axis reflection with the tag reversal.

(1) Let $s$ be a reflection through a vertex.
Lemma~\ref{lem:D-equivariance}\,(2) gives $sT_F = T_F$, and conversely such an $s$ leaves $F$ invariant and acts as a vertical reflection by Lemma~\ref{lem:G2}\,(1).

(2) Let $s$ be an edge-axis reflection, and consider its composite with the tag reversal.
Lemma~\ref{lem:D-equivariance}\,(3) gives $sT_F = T_F$ together with one of two alternatives.
The first is impossible, an edge axis fixing no boundary vertex while $sT_F = T_F$ would carry the pair at a vertex $a$ to the pair at $s(a)$ and force $s(a) = a$.
Hence $d^{2} = m$, and conversely Lemma~\ref{lem:G2}\,(2) makes the composite a vertical reflection.
\end{proof}

\begin{example}
\label{ex:D5-spokes}
Let $C_0$ be the tagged triangulation of $\mathcal{P}_5$ consisting of the five edges $M^{+}_{a,a}$ $(0 \le a \le 4)$, drawn on the left together with the axis through the boundary vertex $0$.
Every axis of $\mathcal{P}_5$ passes through a boundary vertex and $C_0$ is invariant under each of the five reflections, and the Conway symbol of $f_{C_0}$ is ${*}\infty\infty$ by Proposition~\ref{prop:D-geometric}.
The frieze is displayed on the right, of period $2$, its rows being
\[
  x_1 = (2,2), \quad x_2 = (3,3), \quad x_3 = (4,4), \quad
  x_4 = (1,5), \quad x_5 = (5,1).
\]
In the five columns displayed, which carry the $25$ tagged edges of $\mathcal{P}_5$, the entries equal to $1$ are the five ringed ones, three on the row $x_4$ and two on the row $x_5$, in accordance with the alternation of tags of Lemma~\ref{lem:Sch}.
\begin{center}
\begin{tikzpicture}[scale=1.0, baseline=(current bounding box.center), font=\small,
  vtx/.style={circle, fill, inner sep=1.0pt},
  punc/.style={circle, draw, inner sep=1.3pt, line width=0.6pt},
  bdry/.style={gray!65, line width=0.5pt}, edge/.style={line width=0.8pt}]
  \foreach \i/\ang in {0/0, 1/72, 2/144, 3/216, 4/288} \coordinate (v\i) at (\ang:1.5);
  \coordinate (p) at (0,0);
  \foreach \i/\j in {0/1, 1/2, 2/3, 3/4, 4/0}
    \fill[gray!12] (p) -- (v\i) -- (v\j) -- cycle;
  \draw[dashed, gray!75, line width=0.7pt] (-1.95,0) -- (1.95,0);
  \draw[bdry] (v0)--(v1)--(v2)--(v3)--(v4)--cycle;
  \foreach \i in {0,1,2,3,4} \draw[edge] (v\i) -- (p);
  \foreach \i in {0,1,2,3,4} \node[vtx] at (v\i){};
  \node[punc] at (p){};
  \node[above right,font=\scriptsize] at (v0){$0$};
  \node[above,font=\scriptsize] at (v1){$1$};
  \node[above left,font=\scriptsize] at (v2){$2$};
  \node[below left,font=\scriptsize] at (v3){$3$};
  \node[below,font=\scriptsize] at (v4){$4$};
\end{tikzpicture}
\hspace{0.4cm}
\begin{tikzpicture}[scale=0.8, baseline=(current bounding box.center), font=\small,
  num/.style={inner sep=1.3pt},
  numt/.style={inner sep=1.3pt, text=blue!70!black},
  ring/.style={draw=red!70!black, line width=0.8pt}]
  \foreach \x in {0,1,2,3,4} \node[num] at (\x,0) {$1$};
  \foreach \x/\v in {0.5/2, 1.5/2, 2.5/2, 3.5/2, 4.5/2} \node[num] at (\x,-1) {$\v$};
  \foreach \x/\v in {1/3, 2/3, 3/3, 4/3, 5/3} \node[num] at (\x,-2) {$\v$};
  \foreach \x/\v in {1.5/4, 2.5/4, 3.5/4, 4.5/4, 5.5/4} \node[num] at (\x,-3) {$\v$};
  \foreach \x/\v in {2/1, 3/5, 4/1, 5/5, 6/1} \node[numt] at (\x,-3) {$\v$};
  \foreach \x/\v in {2/5, 3/1, 4/5, 5/1, 6/5} \node[num] at (\x,-4) {$\v$};
  \foreach \x in {2.5,3.5,4.5,5.5} \node[num] at (\x,-5) {$1$};
  \foreach \x/\y in {2/-3, 4/-3, 6/-3, 3/-4, 5/-4}
    \draw[ring] (\x,\y) circle (5.2pt);
  \foreach \r/\lab in {0/1, -1/x_1, -2/x_2, -3/{x_3/x_4}, -4/x_5, -5/1}
    \node[anchor=east, gray!70, font=\scriptsize] at (-0.5,\r) {$\lab$};
\end{tikzpicture}
\end{center}
Together with Example~\ref{ex:Dn-frieze} this exhibits the two cases of Lemma~\ref{lem:spokes}, the triangulation $C$ carrying the pair of puncture edges of opposite tags at one vertex and $C_0$ carrying puncture edges of a common tag at distinct vertices.
\end{example}

\begin{proposition}\label{prop:unitary}
A frieze $F$ of type $D_n$ is unitary if and only if $d = 1$ or $d = m$.
\end{proposition}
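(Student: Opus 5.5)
Unitary means $F$ takes the value $1$ on every edge of some tagged triangulation $C$. The plan is to prove the two directions separately, using only Lemma~\ref{lem:FoP}, Lemma~\ref{lem:spokes} and Lemma~\ref{lem:value-one}.

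\emph{Sufficiency.} Suppose $d=1$ or $d=m$, and exhibit an all-ones cluster.
\begin{itemize}
\item In case \textup{(b1)} we have $d=1$. Every edge of $T_F$ has value $1$ by Lemma~\ref{lem:FoP}\,(a) and (b1), so $C=T_F$ works.
\item In case \textup{(b2)} with $d=1$, the same reasoning applies: the peripheral edges of $T_F$ have value $1$ and the un-notched puncture edges have value $d=1$, so again $C=T_F$.
\item In case \textup{(b2)} with $d=m$, the notched partners have value $m/d=1$. Take $C=t(T_F)$, obtained by replacing each un-notched puncture edge of $T_F$ by its notched partner and keeping the peripheral edges. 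Tag reversal preserves compatibility, since puncture edges at distinct vertices are compatible exactly when their tags agree and the tag plays no role against peripheral edges. Hence $C$ is a tagged triangulation with all values $1$.
\end{itemize}

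\emph{Necessity.} Suppose $F$ is unitary, with an all-ones cluster $C$. By Lemma~\ref{lem:spokes}, $C$ contains a puncture edge $M^{\epsilon}_{b,b}$ with $F(M^{\epsilon}_{b,b})=1$, and by Lemma~\ref{lem:value-one} the vertex $b$ carries a puncture edge of $T_F$.
\begin{itemize}
\item In case \textup{(b1)} we have $d=1$ by convention.
\item In case \textup{(b2)}, the puncture edges of $T_F$ at $b$ consist of $M^{+1}_{b,b}$ alone, with value $d$, and its partner $M^{-1}_{b,b}$ has value $m/d$. The value-$1$ edge at $b$ is one of these two, so $d=1$ or $m/d=1$, that is $d=m$.
\end{itemize}

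The main subtlety is the necessity direction: we must be sure the value-$1$ puncture edge really sits at a vertex where $T_F$ has its puncture edges, since otherwise its value is not controlled by $d$. Lemma~\ref{lem:value-one} supplies exactly this. A minor point is checking, in the sufficiency direction, that $t(T_F)$ is maximal. This holds because $t$ is an involution preserving compatibility, so it maps maximal compatible sets to maximal ones.
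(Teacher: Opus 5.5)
Your proof is correct and follows essentially the paper's argument. For sufficiency you exhibit $T_F$, or its tag reversal when $d=m$, as an all-ones cluster. For necessity you use Lemma~\ref{lem:spokes} and Lemma~\ref{lem:value-one} to place a value-$1$ puncture edge at a vertex carrying a puncture edge of $T_F$, which in case \textup{(b2)} forces $d=1$ or $\frac{m}{d}=1$. Your necessity step is slightly leaner than the paper's, because a single such puncture edge suffices, so you need neither \cite[Proposition~2.5]{GS} nor the split into the common-tag and single-vertex-pair cases.
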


\begin{proof}
A frieze with $d = 1$ takes the value $1$ on $T_F$, and one with $d = m$ takes it on the peripheral part of $T_F$ together with the notched edges at the same vertices, which is again a tagged triangulation.
Conversely the edges of value $1$ of a unitary frieze are the variables of the tagged triangulation on which it takes that value \cite[Proposition~2.5]{GS}.
Its puncture edges lie at the vertices carrying those of $T_F$ by Lemma~\ref{lem:value-one}, and by Lemma~\ref{lem:spokes} they either carry a common tag or are the pair at a single vertex.
When they carry a common tag they are the un-notched puncture edges of $T_F$ or the notched edges at the same vertices, which gives $d = 1$ or $\frac{m}{d} = 1$, and when they are the pair at a single vertex that vertex carries an un-notched puncture edge of $T_F$ of value $1$, which gives $d = 1$ again.
\end{proof}

\begin{remark}
\label{rmk:even-n}
For even $n$ the condition of Proposition~\ref{prop:D-geometric} can be strictly finer than bilateral symmetry, the value $d$ entering.
Write the boundary vertices of $\mathcal{P}_8$ as $0, \dots, 7$ and let $T$ be the tagged triangulation
\[
  \{\, M_{0,2},\, M_{0,3},\, M_{4,7},\, M_{5,7},\,
       M^{+1}_{0,0},\, M^{+1}_{3,3},\, M^{+1}_{4,4},\, M^{+1}_{7,7} \,\},
\]
which is invariant under the reflection through the midpoints of $\{7,0\}$ and $\{3,4\}$ but under no reflection through a vertex.
It has $m = 4$ puncture edges, and each of $d = 1, 2, 4$ is the data of a frieze.
For $d = 1$ and $d = 4$ these are the unitary frieze $f_T$ and its tag reversal, with first diagonals $(1,1,2,5,8,3,1,4)$ and $(1,1,2,5,8,3,4,1)$, the former being the frieze of Remark~\ref{rmk:x1-insufficient}, and both have symbol $\infty\infty$ by Proposition~\ref{prop:D-geometric}.
For $d = 2$ it is the frieze with first diagonal $(1,1,2,5,8,3,2,2)$, of period $8$, which shares the rows $x_1, \dots, x_6$ with the other two and differs from them in the leg rows alone.
Here $d^{2} = m$, condition~\textup{(2)} holds, and the symbol is ${*}\infty\infty$.
Thus one and the same triangulation carries friezes with and without a vertical reflection, the value $d$ deciding between them.
Condition~\textup{(2)} occurs only for non-unitary friezes by Proposition~\ref{prop:unitary}, the equality $d^{2} = m$ forcing $d$ to be neither $1$ nor $m$.
For $D_6$ every reflection-symmetric tagged triangulation of $\mathcal{P}_6$ is also vertex-axis-symmetric, as a direct enumeration shows, and for $D_8$ the rotations of $T$ are the only tagged triangulations of the form of Lemma~\ref{lem:FoP} that are invariant under an edge-axis reflection but under no vertex-axis reflection and whose number of puncture edges is a perfect square.
Hence for $5 \le n \le 8$ the frieze with $d = 2$ above is, up to translation, the only one for which condition~\textup{(2)} is needed.
\end{remark}

The hypothesis $n\ge5$ enters through Lemma~\ref{lem:G2}.
For $D_4$ the diagram automorphism group is the symmetric group $S_3$---the triality---and the cluster automorphism group is the larger $\mathrm{Dih}_4\times S_3$ of order $48$ \cite[Table~2]{CZ}.
Its extra anti-automorphisms permute the three legs of $D_4$ across the nodes of the embedded frieze, and these are precisely the boundary-exchanges of Proposition~\ref{prop:D4}.
They are responsible for the three further symbols $\infty{*}$, $2{*}\infty$ and ${*}22\infty$ that $D_4$ realises beyond the dichotomy of Proposition~\ref{prop:D-geometric}.

We close with the folding of type $D_{n+1}$ onto type $C_n$, which places Theorem~\ref{thm:main-BCFG} and Proposition~\ref{prop:C3-classification} within the present framework.
Let $n \ge 3$.
When the two leg rows of a frieze $F = (x_{i,j})$ of type $D_{n+1}$ coincide, $x_{n,j} = x_{n+1,j}$ for all $j$, deleting the row $x_n$ leaves the array with rows $x_1, \dots, x_{n-1}, x_{n+1}$, which we call \emph{the fold} of $F$.

\begin{proposition}\label{prop:fold}
Let $n \ge 3$.
\begin{enumerate}
\item The fold of a frieze $F$ of type $D_{n+1}$ with coinciding leg rows is a frieze of type $C_n$, and every frieze of type $C_n$ is the fold of exactly one such $F$.
\item A frieze $F$ of type $D_{n+1}$ has coinciding leg rows if and only if its data $(T_F, d)$ falls under case \textup{(b1)} or satisfies $d^{2} = m$. In the former case $F$ is unitary, and in the latter it is not.
\item The Conway symbols of $F$ and of its fold coincide.
\end{enumerate}
\end{proposition}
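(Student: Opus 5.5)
Part (1) is a direct comparison of relations. Write the $D_{n+1}$ relations \eqref{eq:Dn-relations} with trivalent node $n-1$ and legs $n$, $n+1$, and assume $x_{n,j}=x_{n+1,j}$. The relations on rows $1,\dots,n-2$ are classical and coincide with the $C_n$ relations on $x_1,\dots,x_{n-2}$. The trivalent relation becomes
\[
x_{n-1,j}x_{n-1,j+1}=1+x_{n-2,j+1}\,x_{n+1,j}^{2},
\]
which is the modified $C_n$ relation on row $n-1$ once the surviving leg row $x_{n+1}$ is renamed as the $n$-th row of the $C_n$ array. The leg relation $x_{n+1,j}x_{n+1,j+1}=1+x_{n-1,j+1}$ is the last $C_n$ relation. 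Hence the fold is a $C_n$ frieze.

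Conversely, for a $C_n$ frieze I would duplicate its last row and read the same identities backwards, which gives a $D_{n+1}$ frieze with coinciding legs. Uniqueness is automatic, since the fold forgets only a row that equals another row. One point needs checking: the placement $Q$ puts $x_{n+1}$ at the same horizontal positions that $P$ assigns to the last $C_n$ row, with $x_n$ interleaved on the bifurcation line. I will verify this from the formulas for $Q$ and $P$.

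For (2), coinciding leg rows means $F$ is invariant under the tag reversal $t$. By Lemma~\ref{lem:Sch}\,(1), $t$ exchanges the two leg positions in each column and fixes every other vertex of $\Z D_{n+1}$, so $F\circ t=F$ is exactly the condition $x_n=x_{n+1}$. By Lemma~\ref{lem:D-equivariance}\,(1) and the fact that $F$ is determined by its data, $F\circ t=F$ holds if and only if case (b1) holds or $m/d=d$. In case (b1) we have $d=1$, so $F$ is unitary by Proposition~\ref{prop:unitary}. If $d^2=m$ and we are in case (b2), then $d=1$ would force $m=1$. This is excluded because Lemma~\ref{lem:spokes} gives $m\ge2$. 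Thus $1<d<m$, and Proposition~\ref{prop:unitary} says $F$ is not unitary.

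For (3), I would show that the two frieze groups coincide, by matching the geometric symmetries. The fold is obtained from $F$ by deleting the points of row $x_n$ on the bifurcation line, and these carry the same values as the row $x_{n+1}$ directly below them. I split into cases.
\begin{enumerate}
\item Translations and vertical reflections. A horizontal translation or vertical reflection of $F$ preserves each horizontal line. On the bifurcation line it preserves the integer and half-integer positions separately, as observed before Proposition~\ref{prop:D-occurrence}. It therefore preserves the row $x_n$, so it restricts to a symmetry of the fold. Conversely, a symmetry of the fold acts on $x_{n+1}$ by the same affine map of column indices. Since $x_n=x_{n+1}$, that map carries $x_n$ to itself, so the symmetry extends to $F$.
\item Boundary exchanges. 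For $n\ge4$ we have $n+1\ge5$, so $F$ has none by Lemma~\ref{lem:D-no-boundary}. The fold has none by Lemma~\ref{lem:no-boundary-exchange}, which applies because $n\ne3$. By Lemma~\ref{lem:boundary-exch-list} both symbols then lie in $\{\infty\infty,{*}\infty\infty\}$, and case 1 shows they agree.
\item The case $n=3$. This is the main obstacle. I would compare Proposition~\ref{prop:D4} with Proposition~\ref{prop:C3-classification}. Under $x_3=x_4$, the $D_4$ equator reflection condition $x_{4,j}=x_{1,j+1}$ becomes $x_{3,j}=x_{1,j+1}$ for the fold, which is exactly the $C_3$ boundary-exchange criterion.
\end{enumerate}

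It remains to rule out a $D_4$ glide with coinciding legs whose fold is not symmetric under the glide. A glide would give $x_{4,j}=x_{1,j+1-\ell}$ with $x_3=x_4$ invariant under the shift by $\ell$. This yields $x_{4,j}=x_{4,j-\ell}=x_{1,j+1-\ell}$, hence $x_{4,j}=x_{1,j+1}$ after substituting $j\mapsto j+\ell$. So the mirror is also a symmetry, and composing mirror and glide gives a translation by $\ell\equiv p/2$. That is a contradiction, so no such glide occurs. The $D_4$ symbol is therefore read from the vertical reflection and the equatorial mirror alone, and case 1 together with the tables of the two propositions shows the symbols agree.
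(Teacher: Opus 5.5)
Your proposal is correct and follows the paper's proof. Parts (1) and (2) are the same relation-matching and tag-reversal arguments, and part (3) has the same structure: translations and vertical reflections preserve the parity of $2x$, the case $n \ge 4$ is settled by the vertical reflection alone, and the case $n=3$ by the vertical and equatorial reflections once glides are excluded. The only variation is for $n=3$: you match the mirror criteria of Propositions~\ref{prop:D4}\,(1) and~\ref{prop:C3-classification}\,(1), and you exclude a glide of $F$ directly from Proposition~\ref{prop:D4}\,(2) and the minimality of the period, whereas the paper restricts every symmetry of $F$ to the fold and uses the absence of glides in type $C_3$.
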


\begin{proof}
(1) With $x_n = x_{n+1}$ the relation~\eqref{eq:Dn-relations} at the trivalent node $n-1$ reads $x_{n-1,j}\, x_{n-1,j+1} = 1 + x_{n-2,j+1}\, x_{n+1,j}^{\,2}$ and the relation at the leg $n+1$ reads $x_{n+1,j}\, x_{n+1,j+1} = 1 + x_{n-1,j+1}$, while the remaining relations do not involve the legs.
These are the relations of type $C_n$ of Subsection~\ref{sssec:Cn} for the rows $x_1, \dots, x_{n-1}, x_{n+1}$, the last one in the role of $x_n$.
Conversely, a frieze of type $C_n$ with its last row repeated as the row $x_n$ satisfies~\eqref{eq:Dn-relations}, and the two constructions are mutually inverse.
The fold is embedded as in Definition~\ref{df:basic-domain}, the row $x_{n+1}$ of $F$ occupying the positions $P(n,j)$.

(2) The tag reversal exchanges the two leg positions in every column and fixes the others \textup{(}Lemma~\ref{lem:Sch}\,\textup{(1)}\textup{)}, whence the leg rows of $F$ coincide exactly when $F \circ t = F$.
By Lemma~\ref{lem:D-equivariance}\,\textup{(1)} the data of $F \circ t$ is $(T_F, 1)$ in case \textup{(b1)} and $(T_F, \frac{m}{d})$ in case \textup{(b2)}, and since the data determines the frieze, $F \circ t = F$ holds exactly in case \textup{(b1)} or when $d^{2} = m$.
In case \textup{(b1)} the frieze is unitary by Proposition~\ref{prop:unitary}, and $d^{2} = m$ with $m \ge 2$ excludes $d = 1$ and $d = m$.

(3) The rows $x_1$ and, for $n = 3$, $x_4$ have half-integral abscissae, whence every symmetry of $F$ acts on abscissae by $x \mapsto x + \ell$ or $x \mapsto c - x$ with $\ell, c \in \Z$ and preserves the parity of $2x$, which distinguishes the two rows interleaved on the bifurcation line.
Every symmetry of $F$ therefore carries the positions of the row $x_n$ onto themselves and restricts to a symmetry of the fold.
Conversely, a vertical reflection of the fold extends to $F$, the rows $x_n$ and $x_{n+1}$ carrying equal values at equal abscissae, and the reflection in the equator of the fold, which occurs only for $n = 3$, extends to $F$ because it fixes the bifurcation line pointwise.
For $n \ge 4$ the symbol is decided on both sides by the vertical reflection \textup{(}Theorem~\ref{thm:main-BCFG} and Proposition~\ref{prop:D-occurrence}\,\textup{(1)}\textup{)}.
For $n = 3$ the symbol of the fold is decided by the vertical and the equatorial reflection \textup{(}Proposition~\ref{prop:C3-classification}\textup{)}, no frieze of type $C_3$ admitting a glide, whence $F$ admits no glide either and its symbol is decided by the same two reflections \textup{(}Proposition~\ref{prop:D-occurrence}\,\textup{(2)}\textup{)}.
\end{proof}

\begin{corollary}\label{cor:Cn-geometric}
Let $n \ge 4$, let $G$ be a frieze pattern of type $C_n$, and write $(T_F, d)$ for the data of the frieze $F$ of type $D_{n+1}$ whose fold is $G$.
Then the Conway symbol of $G$ is ${*}\infty\infty$ if $T_F$ is invariant under a reflection of $\mathcal{P}_{n+1}$ through a boundary vertex, or if $d^{2} = m$ and $T_F$ is invariant under some reflection of $\mathcal{P}_{n+1}$, and $\infty\infty$ otherwise.
\end{corollary}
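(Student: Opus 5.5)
The plan is to chain Proposition~\ref{prop:fold} with Proposition~\ref{prop:D-geometric}. Since $n \ge 4$, the frieze $F$ of type $D_{n+1}$ satisfies $n+1 \ge 5$. By Proposition~\ref{prop:fold}\,(1) there is exactly one such $F$ with coinciding leg rows whose fold is $G$, so the data $(T_F, d)$ is well defined. By Proposition~\ref{prop:fold}\,(3) the Conway symbols of $G$ and $F$ coincide. By Proposition~\ref{prop:D-geometric}, applied to $\mathcal{P}_{n+1}$, the symbol of $F$ is ${*}\infty\infty$ exactly when condition~(1) or~(2) there holds, and $\infty\infty$ otherwise. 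What remains is to show that, for a frieze with coinciding leg rows, these two conditions together are equivalent to the condition stated in the corollary.

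Condition~(1) is literally the first alternative of the corollary. For the second alternative, Proposition~\ref{prop:fold}\,(2) says that $F$ is either in case (b1) or satisfies $d^{2} = m$.

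Suppose first that $d^{2} = m$ and $T_F$ is invariant under some reflection $s$. If the axis of $s$ passes through a vertex, condition~(1) of Proposition~\ref{prop:D-geometric} holds. If the axis joins two edge midpoints, then $n+1$ is even, since for odd $n+1$ every axis passes through a vertex. In that case condition~(2) of Proposition~\ref{prop:D-geometric} holds verbatim. Either way $F$, and hence $G$, has symbol ${*}\infty\infty$.

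Conversely, suppose condition~(2) of Proposition~\ref{prop:D-geometric} holds. Then $d^{2} = m$ and $T_F$ is invariant under a reflection, so the second alternative of the corollary holds. Condition~(1) is shared by both statements, which completes the equivalence.

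The point needing care is the case (b1), where $d = 1$ and $m = 2$, so $d^{2} \ne m$. Here the second alternative of the corollary is vacuous, and we must check that no symmetry is lost. In case (b1), condition~(2) of Proposition~\ref{prop:D-geometric} also fails, because $d^{2} \ne m$. Independently, an edge-axis reflection cannot fix $T_F$, since it would have to fix the vertex carrying the pair of puncture edges, as argued in the proof of Proposition~\ref{prop:D-geometric}. So in case (b1) only condition~(1) of Proposition~\ref{prop:D-geometric} can apply, and it matches the first alternative of the corollary. The dichotomy therefore transfers unchanged.

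I expect this case bookkeeping to be the only real obstacle. It rests on the remark in Proposition~\ref{prop:D-geometric} that for odd $n+1$ every axis passes through a vertex, which lets the parity hypothesis be dropped from the corollary's statement.
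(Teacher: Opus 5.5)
Your proposal is correct and is essentially the paper's proof: transfer the symbol from $G$ to $F$ through Proposition~\ref{prop:fold}\,(3), then match the conditions of Proposition~\ref{prop:D-geometric} with those of the corollary. The matching uses that in case (b1) only condition~(1) can hold, while for $d^{2}=m$ the two conditions together amount to invariance of $T_F$ under some reflection. Your case bookkeeping is more explicit than the paper's two-line argument. In fact the equivalence of the two pairs of conditions is purely logical, using only that edge-axis reflections exist only for even $n+1$, so the appeal to Proposition~\ref{prop:fold}\,(2) is harmless but not strictly needed.
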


\begin{proof}
By Proposition~\ref{prop:fold}\,\textup{(3)} the symbol of $G$ is that of $F$, which Proposition~\ref{prop:D-geometric} reads off the data.
In case \textup{(b1)} we have $d = 1$ and $m = 2$, and only condition \textup{(1)} of Proposition~\ref{prop:D-geometric} can hold, while for $d^{2} = m$ its two conditions together amount to the invariance of $T_F$ under some reflection.
\end{proof}

Through the fold, the triality of $D_4$ is the source of the exceptional type $C_3$ as well, Proposition~\ref{prop:C3-classification} being Proposition~\ref{prop:D-occurrence}\,\textup{(2)} restricted to the friezes of Table~\ref{tab:D4} with coinciding leg rows.

Table~\ref{tab:summary} collects the classification.
In the column of type $A_n$, B and S mean that the triangulation is bilateral and centrally symmetric \textup{(}Theorem~\ref{thm:main-An}\textup{)}.
In the other columns, P means that the first interior row $x_1$ is palindromic \textup{(}Theorem~\ref{thm:main-BCFG} and Proposition~\ref{prop:C3-classification}\textup{)}, E that the reflection in the equator is a symmetry, which for $C_3$ reads $x_{3,j} = x_{1,j+1}$ and for $D_4$ reads $x_{4,j} = x_{1,j+1}$, V that a vertical reflection is a symmetry, which for $D_n$ is read off the data by Proposition~\ref{prop:D-geometric}, and G that a glide along the equator is a symmetry \textup{(}Proposition~\ref{prop:D4}\textup{)}.
For $D_4$ the word only indicates that the remaining two features are absent.

\begin{table}[ht]
\centering
\renewcommand{\arraystretch}{1.2}
\begin{tabular}{lccccc}
\hline
Symbol & $A_n$ & \begin{tabular}[c]{@{}c@{}} $B_n$, $C_n$ $(n \ne 3)$ \\ $F_4$, $G_2$ \end{tabular} & $C_3$ & $D_n$ $(n \ge 5)$ & $D_4$ \\
\hline
${*}22\infty$      & B and S           & --    & P and E           & --    & V and E \\
$2{*}\infty$       & B, not S          & --    & --                & --    & V and G \\
$22\infty$         & --                & --    & --                & --    & -- \\
${*}\infty\infty$  & --                & P     & P, not E          & V     & V only \\
$\infty{*}$        & S, not B          & --    & E, not P          & --    & E only \\
$\infty{\times}$   & not B, not S      & --    & --                & --    & -- \\
$\infty\infty$     & --                & not P & not P, not E      & not V & none \\
\hline
\end{tabular}
\caption{Occurrence of the seven Conway symbols. }
\label{tab:summary}
\end{table}


\end{document}